\newcommand{\la}{\lambda}
\DeclareMathOperator{\spn}{span}
\DeclareMathOperator{\im}{Im}
\DeclareMathOperator{\Id}{Id}
\definecolor{bloesh}{RGB}{160, 194, 185}
\definecolor{roo}{RGB}{241,23,70}
\newfont{\gothic}{eufm10}
\newcommand{\R}{\mathbb{R}}
\newcommand{\Z}{\mathbb{Z}}
\newcommand{\N}{\mathbb{N}}
\newcommand{\1}{{\bf 1}}  % JWS: we might not want to use this.
\newcommand{\0}{{\bf 0}}  % JWS: we might not want to use this.
\newcommand{\lam}{\lambda}
\newcommand{\DeltaP}{{\Delta_\mathcal{P}}}
\newcommand{\calP}{\mathcal{P}}
\newcommand{\cupdot}{\mathbin{\mathaccent\cdot\cup}}
\theoremstyle{plain}
\newtheorem{thr}{Theorem}[section]
\newtheorem{lem}[thr]{Lemma}
\newtheorem{cor}[thr]{Corollary}
\newtheorem{prop}[thr]{Proposition}
\theoremstyle{definition}
\newtheorem{defi}[thr]{Definition}
\newtheorem{ex}[thr]{Example}
\theoremstyle{remark}
\newtheorem{remk}[thr]{Remark}
\theoremstyle{remark}
\definecolor{mycolor}{RGB}{190, 0, 190}
\begin{document}
\thanks{\today}
\keywords{weighted coupled cell networks, synchrony, anti-synchrony, invariant subspaces}
\subjclass[2010]{34C15, 34C14, 05C50, 05C22, 05C90}
%\subjclass[2010]{05C50, 05C22, 05C90, 34C15}

%opening
\title[Anti-Synchrony Subspaces of Weighted Networks]
{Invariant Synchrony and Anti-Synchrony Subspaces of Weighted Networks}

\author{Eddie Nijholt}
\address{Department of Mathematics, Imperial College London, United Kingdom }
\email{eddie.nijholt@gmail.com}

\author{N\'andor Sieben}
\author{James W. Swift}
\address{Department of Mathematics and Statistics, Northern Arizona University PO Box 5717, Flagstaff, AZ 86011-5717, USA}
\email{Nandor.Sieben@nau.edu,Jim.Swift@nau.edu}

\begin{abstract}
The internal state of a cell in a coupled cell network is often described by an element of a vector space. Synchrony or anti-synchrony occurs when some of the cells are in the same or the opposite state. Subspaces of the state space containing cells in synchrony or anti-synchrony are called polydiagonal subspaces. We study the properties of several types of polydiagonal subspaces of weighted coupled cell networks. In particular, we count the number of such subspaces and study when they are dynamically invariant. Of special interest are the evenly tagged anti-synchrony subspaces in which the number of cells in a certain state is equal to the number of cells in the opposite state. Our main theorem shows that the dynamically invariant polydiagonal subspaces determined by certain types of couplings are either synchrony subspaces or evenly tagged anti-synchrony subspaces. 
A special case of this result confirms a conjecture about difference-coupled graph network systems.   
\end{abstract}
\maketitle

\section{Introduction}
Networks are widely used as models throughout science and engineering. 
Examples include such diverse topics as flocking behavior in animals \cite{flock}, power grids, and neural connectomes \cite{coll}.
A central theme is the observation that complex networks have the remarkable capability of spontaneous self-organization. 
This is most often explored in the form of synchronization: 
the phenomenon where multiple cells in a network start displaying identical or highly related dynamical behavior. 
Synchronization has been linked to gene regulatory networks \cite{gene}, the workings of the brain \cite{Fries}, and opinion dynamics \cite{opinion}, among many other examples \cite{Arenas}.

Recently, another form of self-organization has received increasing attention, namely that of anti-synchronization. 
This occurs when two or more cells in the network show opposite behavior.
It has applications in the context of multi-agent systems \cite{hu2013bipartite}, chaotic oscillators \cite{KimAntiSynch, Liu2006}, Chua's oscillators \cite{miller1999synchronization}, optically coupled semiconductor lasers \cite{wedekind2001experimental}, and neural networks \cite{meng2007robust}.

In its most elementary form, synchrony means that two or more cells in the system have identical behavior.  
This can be modeled by assuming a flow-invariant subspace of the state space defined by one or more equalities of the form $x_i = x_j$, with $x_i$ describing the internal state of cell $i$. 
Similarly, anti-synchrony is often viewed as the presence of a flow-invariant subspace determined by the aforementioned conditions, as well as those of the form $x_i = -x_j$ or $x_k = 0$. 
We will refer to any of these subspaces as polydiagonal subspaces. 
If only equalities of the form $x_i = x_j$ are needed to describe the subspace, then we will speak of a synchrony subspace. 
In all other cases, when at least one relation of the form $x_i = -x_j$ or $x_k = 0$ is in effect, we will speak of an anti-synchrony subspace.

To account for models with uncertain or tunable details,  
one is often interested in polydiagonal subspaces that are invariant for entire families of network ODEs at once.
The structure of a coupled cell network is 
%A quite general network structure is 
modeled by a weighted digraph.
We are interested in classifying the subspaces that are invariant for all systems 
with a given digraph \cite{AD2020, NSS6, Pivato}.
These are often referred to as robust invariant subspaces.
Surprisingly, there are many results showing that robust polydiagonal subspaces,
while required to be invariant for all network ODEs,
can be characterized as those that are robust for all linear network ODEs. This characterization allows one to use results from linear algebra to classify robust polydiagonal subspaces or to identify rules of implication or exclusion
\cite{AD2020, Aguiar&Dias, Torok, NSS6}.

For instance, Golubitsky, Stewart and collaborators have developed the so-called groupoid formalism to define admissible vector fields, which may be seen as the most general class of ODEs that are compatible with a given network structure \cite{Pivato}.
It is later shown that a synchrony space is invariant under the flow of all admissible vector fields, if and only it is invariant for all \emph{linear} admissible vector fields \cite{Torok, Aguiar&Dias}.

In \cite{NSS6}, the authors consider instead so-called difference-coupled vector fields with respect to a graph, which have the component-wise description 
\begin{align}\label{diff-coupled}
f_i(x) = g(x_i) + \sum_{j \in N(i)}h(x_j - x_i)\, .
\end{align} 
Here each component corresponds to a node $i$ in the graph, which is given a variable $x_i$ in some  phase space $V$ that is the same for each of the $n$ nodes, and $N(i)$ denotes the neighbors of $i$. 
Correspondingly, the full ODE is defined on the total phase space $V^n$, with $x = (x_1, \dots, x_n) \in V^n$ denoting the combined variable of the system. 
The functions $g, h: V \to V$ may in general be non-linear.

Multiple classes of difference-coupled vector fields are considered, by imposing various natural restrictions on $f$ and $g$ in Equation \eqref{diff-coupled}. 
The authors then classify those polydiagonal subspaces that are invariant for all vector fields in a given class.  
Again, in many cases the result is that these polydiagonal subspaces are precisely those that are invariant under a given linear map.

These results are generalized in \cite{AD2020} by considering systems of the form 
\begin{align}\label{weight-additive-coupled}
f_i(x) = g(x_i) + \sum_{j =1}^n w_{ij}h(x_i, x_j)\, .
\end{align} 
Here the coupling strength $w_{ij} \in \R$ is the weight of the directed edge from cell $j$ to cell $i$, and $h: V \times V \to V$ now takes two inputs. 
As in \cite{NSS6}, different classes of vector fields are considered by imposing constraints on $f$ and $g$, and invariant polydiagonal subspaces are considered for each of these classes. 
In many cases a polydiagonal subspace is invariant under all vector fields in a given class, if and only if it is invariant under the weight matrix $W = (w_{ij})$ or the Laplacian matrix associated to it.

For instance, the authors of \cite{AD2020} consider all systems of the form Equation \eqref{weight-additive-coupled} with $g$ odd, and $h$ even in its first argument and odd in its second (the so-called even-odd-input-additive coupled cell systems). 
They then show that a polydiagonal subspace is invariant under the flow of all such systems, if and only if it is kept invariant by the weight matrix $W$, where we assume $V = \R$ for convenience.

These results illustrate the importance of determining polydiagonal subspaces that are invariant under certain linear maps, as in many cases this equates to invariance under a broad class of generally non-linear systems. Guided by the importance of linear network ODEs, we study a class of coupled cell networks with identical cells and linear coupling.
Two examples, coupled van der Pol oscillators, and coupled Lorenz attractors, are used to illustrate results about dynamically invariant polydiagonal subspaces.
The anti-synchrony of coupled Lorenz attractors observed in \cite{KimAntiSynch} is explained
by our results.
We also demonstrate a new mechanism for anti-synchrony in this system.

A major motivation for this paper is \cite[Conjecture 5.3]{NSS6}. It states that in an anti-synchrony subspace that is invariant under the graph Laplacian of an unweighted graph, the number of cells in a common state must be the same as the number of cells in the opposite state. 
This allows, for example, the subspaces with 
\emph{typical elements} of the form $(a,-a,b,-b,0)$ or $(a,-a,a,-a)$ but not of the form $(a,a,-a)$.
This conjecture is proved here as Theorem~\ref{conjecture53} as 
a corollary of our main theorems.
One interesting consequence is that such networks with an odd number of cells always have at least one node with vanishing dynamics in any 
anti-synchrony subspace.
We also point out that the aforementioned condition rules out 
the vast majority of the
anti-synchrony subspaces.
This can make it significantly easier to find all such subspaces in a systematic way.
 
To explore the different implications on polydiagonal subspaces imposed by a linear map,  we introduce various types of polydiagonal subspaces.
An anti-synchrony subspace satisfying the conclusion of Theorem~\ref{conjecture53}
is called 
\emph{evenly tagged}. 
Slightly less restrictive is the concept of a  
\emph{fully tagged anti-synchrony subspace}. 
Here every node has at least one node with opposite dynamics. 
If this is the same node, then it necessarily has vanishing dynamics. 
Examples are given by the aforementioned evenly tagged subspaces, as well as those with typical element $(a,a,-a)$. 
At the other extreme within the set of anti-synchrony subspaces sit the 
\emph{minimally tagged subspaces}. 
Here we do not have any relations of the form $x_i = -x_j$, unless we have 
$i = j$ which implies $x_i = 0$.
Examples include those with typical elements $(a,a,0,0)$ and $(a,b,a,0,c,a)$. 
Note that the only anti-synchrony subspace that is both minimally tagged and fully tagged is the 
trivial subspace $\{ {\bf {0}}\}$. 

This paper is organized as follows. In 
Section~\ref{Preliminaries} we introduce the basics of weighted digraphs and linearly coupled cell networks used in subsequent sections. 
Section~\ref{TaggedPartitionsandPolydiagonalSubspaces} then focuses on the different types of polydiagonal subspaces and their relation to so-called tagged partitions of sets. 
Next, Section~\ref{Dynamicallyinvariantsubspaces} investigates dynamically invariant
subspaces for linearly coupled cell networks.
Having thus established our main motivation for looking at polydiagonal subspaces, we next look at structural implications of connection matrices on corresponding subspaces.  
In Section \ref{ALemmaaboutMInvariantSubspaces} we formulate and prove the main technical result of this paper in the form of Lemma \ref{MainLemma}. 
It shows how certain properties of a matrix have important implications on its invariant subspaces.  
Section \ref{Results_for_Matrices_with_Constant_Column_Sums} further builds on this result, 
by showing how the property of having constant column sums has strong implications on the anti-synchrony subspaces of a matrix. 
Important examples of matrices with this property include the Laplacian of a graph and the adjacency matrix of a weighted digraph obtained from the Cayley color digraph of a group. 
Here we also prove and generalize \cite[Conjecture 5.3]{NSS6}. 
Finally, in Section \ref{Counting_polydiagonal_subspaces} we investigate the number of polydiagonal subspaces in $\R^n$ belonging to each of our classes. 
We find that some of the relevant number sequences establish surprising connections to other areas of mathematics, whereas other sequences seem to have been previously unknown.  

\section{Preliminaries}\label{Preliminaries}

\subsection{Weighted digraphs}

A \emph{digraph} $G$ is a pair $(V_G,E_G)$, where $V$ is a finite set of vertices and $E_G\subseteq V\times V$ is the set of arrows. 
We usually let $V=\{1,\ldots,n\}$. 
The arrow $(i,j) \in E_G$ connects the tail $i$ to the head $j$.
Note that our digraphs can have loops and do not have multiple arrows from one vertex to another. A \emph{weighted digraph} is a pair $(G,w)$, where $G$ is a digraph and $w:E_G\to \mathbb{R}\setminus \{0\}$ is a \emph{weight function}. The \emph{underlying digraph} of the weighted digraph $(G,w)$ is the digraph $G$. Every digraph can be treated as a weighted digraph with a default weight function whose value is $1$ for every edge.

Consider a weighted digraph $(G,w)$ with vertex set $\{1,\ldots, n\}$. The \emph{adjacency matrix} of the weighted digraph is the $n\times n$ matrix $A$ defined by 
$$
A_{i,j}:=
\begin{cases}
w(j,i) & \text{if } (j,i)\in E_G \\
0      & \text{otherwise}.
\end{cases}
$$ 
Note that $A_{i, j}$ is the weight of the 
arrow to vertex $i$ from vertex $j$.
So our adjacency matrix is an in-adjacency matrix, which is the standard for coupled cell networks \cite{AD2020}.
The adjacency matrix provides a bijection from the set of weighted digraphs with vertex set $\{1,\ldots , n\}$ to $\mathbb{R}^{n\times n}$. 
The \emph{Laplacian matrix} of the weighted digraph is the $n\times n$ matrix $L$ defined by 
\begin{equation}
\label{Laplacian}
L_{i,j} := \left (\sum_{k = 1}^n  A_{i,k} \right )  \delta_{i,j}  - A_{i,j}=
\begin{cases}
 -A_{i,j} & \text{if }i \neq j\\
 \sum_{k \neq i} A_{i, k} & \text{if }i = j ,
\end{cases}
\end{equation}
where $\delta$ is the Kronecker delta function. 

For a vertex $i$ of a weighted digraph $(G,w)$, the \emph{in-neighborhood} of $i$ is $N^-(i):=\{j\mid (j,i)\in E_G\}$ and the \emph{out-neighborhood} of $i$ is $N^+(i):=\{j\mid (i,j)\in E_G\}$. The \emph{in-degree} and \emph{out-degree} of $i$ are 
\[
d^-(i):=\sum_{j\in N^-(i)} w(j,i), \qquad d^+(i):=\sum_{j\in N^+(i)} w(i,j).
\] 
These values can be computed as the row and column sums  
\[
d^-(i)=\sum_{j=1}^n A_{i,j}, \qquad d^+(j)=\sum_{i = 1}^n A_{i,j}
\]
of the adjacency matrix $A$.

Generalizing \cite{imbalance}, 
%[https://core.ac.uk/download/pdf/42932978.pdf]
the \emph{imbalance} of $i$ is $b(i):=d^+(i)-d^-(i)$. 
The weighted digraph is called \emph{weight-balanced} if the imbalance of every vertex is 0. To avoid cluttered terminology, we will often refer to a weight-balanced weighted digraph as simply a weight-balanced digraph. 
For connected unweighted digraphs, the notion of weight-balanced (using the default weight function) is equivalent to the existence of an Euler circuit. 
Note that a weighted digraph is weight-balanced if and only if the $i$-th row sum of the adjacency matrix is equal to the $i$-th column sum for all $i$.

A \emph{graph} is a pair $(V,E)$, where $V$ is a finite set of vertices and $E$ is the set of edges, where each edge is a set of two vertices.

The \emph{underlying graph} of a digraph $G$ is the graph with vertex set $V = V_G$ and edge set $E = \{\{i,j\}\mid (i,j)\in E_G\}$. A digraph is \emph{weakly connected} if its underlying graph is connected. A digraph is \emph{strongly connected} if there is a directed path from any vertex to any other vertex. A weighted digraph is weakly (strongly) connected if the underlying digraph is weakly (strongly) connected. A square matrix is called \emph{irreducible} if it is the adjacency matrix of a strongly connected weighted digraph.

Given a graph, the \emph{corresponding digraph} is built from the graph by replacing every edge by a pair of arrows. An edge $\{i,j\}$ is replaced by the arrows $(i,j)$ and $(j,i)$. The \emph{Laplacian matrix} of a graph is the Laplacian matrix of its corresponding digraph.

An \emph{automorphism of a weighted digraph} 
$(G,w)$ 
is a bijection $\phi:V_G\to V_G$ such that $(\phi(i),\phi(j))\in E_G$ if and only if $(i,j)\in E_G$, and $w(\phi(i),\phi(j))=w(i,j)$ for all $(i,j)\in E_G$. The set of automorphisms under composition is the \emph{automorphism group of the weighted digraph} $(G,w)$.
In other words, the automorphism group of a weighted digraph is the subgroup of the automorphism group of the underlying digraph consisting of the weight preserving automorphisms. 

\subsection{Coupled cell networks}

\label{CoupledCellNetworks}
Polydiagonal subspaces are of interest in the study of coupled cell networks.  The dynamics of a coupled cell network is
modeled by a system of
ordinary differential equations (ODEs) with \emph{cells} in $C := \{1, \ldots, n\}$
with a state variable $x_i \in \R^k$ for $i \in C$.  
The state of the system is $x = (x_1, \ldots, x_n) \in (\R^k)^n$.
We consider the case where each cell
has identical  \emph{internal dynamics} governed by ${\dot x}_i = f(x_i)$ for
a given function $f: \R^k \to \R^k$.  We assume that the function $f$ is smooth so
that the initial value problem has a unique solution.
A coupled cell network is represented by a weighted digraph $(G,w)$ with vertices for the cells and arrows corresponding to the coupling.

In this paper, we restrict our attention to linearly coupled cell networks defined by
the internal dynamics, 
the adjacency matrix $A \in \R^{n \times  n}$ for $(G,w)$,
and a \emph{coupling matrix} $H \in \R^{k\times k}$.
The system of ODEs is
\begin{equation}
\label{ODEwithA}
{\dot x}_i =  f(x_i) + \sum_{j \in N^-(i)} w(j,i) H x_j = f(x_i) + \sum_{j = 1}^n A_{i,j} H x_j.
\end{equation}
This system is a special case of \cite[Equation (1.1)]{AD2020}, with 
$H x_j = h(x_i, x_j)$.
Another common system, especially in the physics literature, is
\begin{equation}
\label{ODEwithL}
{\dot x}_i = f(x_i) + \sum_{j=1}^n A_{i,j} H(x_i - x_j) = f(x_i) + \sum_{j=1}^n L_{i,j}  H x_j.
\end{equation}
This is often called diffusive coupling, since 
$L$ is the Laplacian matrix of the weighted digraph, defined in Equation~(\ref{Laplacian}).
See for example \cite[Equation(1)]{Diggans21}, where $\hat H$ is used for our $H$.

Note that the Laplacian matrix of a weighted digraph $G$ is the adjacency matrix for a different weighted digraph $G_L$, so the distinction between
$A$ and $L$ is somewhat arbitrary for linear coupling.
That being said, the row sums of a Laplacian matrix are all zero, 
and consequently $\1:=(1,1,\ldots,1)$ is an eigenvector of $L$ with  eigenvalue 0.

Note that Systems (\ref{ODEwithA}) and (\ref{ODEwithL}) can each be written as
$\dot x = F(x)$, for a function $F: (\R^k)^n \to (\R^k)^n$ defined by
\begin{equation}
\label{ODE}
{\dot x}_i = F_i(x) := f(x_i) + H \sum_{j=1}^n M_{i,j} x_j.
\end{equation}
The adjacency matrix $M$ is arbitrary, and we typically use $A$ as a general adjacency matrix, or  $L$ 
if the row sums are all $0$.  The matrix $H$ has been moved to multiply the sum rather than each term.

\begin{ex}
\label{van_der_Pol}
A system of $n$ linearly coupled van der Pol oscillators is described by
\begin{equation}
\label{coupled_vdp}
{\ddot u}_i = -\varepsilon (1-u_i^2) {\dot u}_i - u_i + (M u)_i
\end{equation}
for a function $u: \R \to \R^n$.
This can be put in the form of System~(\ref{ODE}) with $k = 2$.  
The internal dynamics and coupling matrix are $f(u,v) = (v, -\varepsilon(1-u^2)v - u)$ and
$H = \left [ \begin{smallmatrix}
0 & 0 \\
1 & 0
\end{smallmatrix}
\right ]
$.
\end{ex}

\begin{ex}
\label{Lorenz}
A system of $n$ coupled Lorenz attractors can be written as 
System~(\ref{ODE}) with $k = 3$ and internal dynamics 
$f(u,v,w) = (10(v-u), u(28 - w) - v, u v - \frac8 3 w)$.
These systems have attracted significant interest, see for example \cite{PecoraCarroll90, KimAntiSynch, Diggans21}.
The 9 possible $H$ matrices with exactly one non-zero entry are considered in \cite{PhysRevE_Pecora09}.
\end{ex}

\section{Tagged Partitions and Polydiagonal Subspaces}
\label{TaggedPartitionsandPolydiagonalSubspaces}

Consider a cell network in which the internal state of every cell can be described by an element of a vector space. Some of these cells can be in the same state, while others can be in the opposite state. To describe this situation, we want to group all the cells that are in the same state into a single class. These classes partition the set of cells. We also want to indicate the pairs of classes that are in the opposite state. This requires a mapping of some of the classes to other classes. These goals motivate the use of the following mathematical tools.

A \emph{partial function} between the sets $X$ and $Y$ is a function
$f:\tilde{X}\to Y$, where $\tilde{X}$ is a subset of $X$. We do
allow $\tilde{X}$ to be the empty set in which case the partial function
is the empty function. We also allow $\tilde{X}=X$ in which case
we say that the partial function is \emph{fully defined}. A \emph{partial
involution} on $X$ is a partial function $x\mapsto x^{*}: \tilde X \to \tilde X$ on $X$ that is its own inverse.

\begin{defi}
\label{tagged}
A \emph{tagged partition} of a finite set $C$ is a partition $\mathcal{P}$
of $C$ together with a partial involution 
$P\mapsto P^{*}: \tilde{\mathcal{P}} \to \tilde{\mathcal{P}}$ on $\mathcal{P}$
that has at most one fixed point. 
\end{defi}

Note that $P^{**}=P$ follows from the definition of a partial involution.  We denote the fixed point of the partial involution, if it exists, by $P_0 = P_0^*$.

Recall that a partition $\mathcal{P}$ of a set $C$ determines an equivalence relation on $C$. This relation is sometimes denoted by $\bowtie$ in the literature. The equivalence class of $i\in C$ in this relation is denoted by $[i]$.

\begin{defi}
If $\mathcal{P}$ is a tagged partition of $C=\{1,\ldots,n\}$, then
\[
\Delta_{\mathcal{P}}:=\{x\in\mathbb{R}^{n}\mid x_{i}=x_{j}\text{ if }[i]=[j]\text{ and }x_{i}=-x_{j}\text{ if }[i]^{*}=[j]\}
\]
is the \emph{polydiagonal subspace for} $\mathcal{P}$. 
A subspace $\Delta$ of $\R^n$ is called a \emph{polydiagonal subspace} if $\Delta = \Delta_{\mathcal{P}}$ for some tagged partition $\mathcal{P}$.
\end{defi}

Note that $x_{i}=0$ if $x\in\Delta_{\mathcal{P}}$ and $[i]^{*}=[i]$ in $\mathcal{P}$. Also note that every polydiagonal subspace
corresponds to a unique tagged partition. The uniqueness is guaranteed because the partial involution has at most one fixed point.
So $\mathcal{P}\mapsto\Delta_{\mathcal{P}}$ is a bijection between the tagged partitions of $C$ and the polydiagonal subspaces of $\R^n$.

\begin{ex}
\label{ex:polydiagonal}
The polydiagonal subspace of the tagged partition 
$$
\mathcal{P}=\{\{1\},\{2,4\},\{3\},\{5,6\}\}
$$ 
with $\{1\}^*=\{2,4\}$ and $\{5,6\}^*=\{5,6\}$ is $\Delta_\mathcal{P}=\{(a,-a,b,-a,0,0) \mid a,b\in \mathbb{R} \}$. We say that $(a,-a,b,-a,0,0)$ is the \emph{typical element} of $\Delta_\mathcal{P}$. 

This subspace can also be written as $\{x \in \R^6 \mid x_1 = - x_2 = - x_4, \ x_5 = x_6 = 0\}$ or as the null space of 
\[
\left[
\begin{smallmatrix}
1 & 0 & 0 & 1  & 0 & 0\\ 
0 & 1 & 0 & -1 & 0 & 0\\ 
0 & 0 & 0 & 0  & 1 & 0\\ 
0 & 0 & 0 & 0  & 0 & 1\\ 
\end{smallmatrix}
\right]
\]
or as $\spn\{(1,-1,0,-1,0,0),(0,0,1,0,0,0)\}$. Each of these representations have advantages and disadvantages with respect to categorization, information density, simplicity, ease of use, uniqueness, and computational efficiency.
\end{ex}

Although a tagged partition is a simple and well-behaved mathematical object that efficiently and uniquely encodes a polydiagonal subspace, the notation for describing specific examples of tagged partitions can be cumbersome. As the previous example shows, the typical element can provide a concise notation for a concrete polydiagonal subspace. 

We are interested in several special types of polydiagonal subspaces.

\begin{defi}
\label{TypesOfPolydiagonals}
Let $\mathcal{P}$ be a tagged partition of $C=\{1,\ldots,n\}$. We say that the polydiagonal subspace $\Delta_{\mathcal{P}}$ is a
\begin{enumerate}
\item \emph{synchrony subspace} if the partial involution is the empty function;
\item \emph{anti-synchrony subspace} if the partial involution is 
not the empty function.
\end{enumerate}
We say that $\DeltaP$ is
\begin{itemize}
\item \emph{minimally tagged} if the domain of the partial involution is a singleton set;
\item \emph{fully tagged} if the partial involution
is fully defined;
\item \emph{evenly tagged} if it is fully tagged and
$\#P=\#P^{*}$ for all $P\in\mathcal{P}$.
\end{itemize}
\end{defi}

\begin{remk}
The following table shows  examples of typical elements of different types of polydiagonal subspaces.
We follow a convention that the first $a$ precedes both $-a$ and $b$, and the first $b$ precedes both $-b$ and $c$, etc.
\begin{center}
\begin{tabular}{ll}
\hline
Synchrony subspace & $(a, b, a, c)$\\
Anti-synchrony subspaces \\
$\quad$ minimally tagged & $(a, b, 0)$\\
$\quad$ fully tagged & $(a, a, -a, 0, b, -b)$\\
$\quad$ evenly tagged & $(a, a, -a, 0, b, -b, -a)$ \\
\hline
\end{tabular}
\end{center}
The typical element of a synchrony subspace has no $0$'s and no negative signs, and the
typical element of an anti-synchrony subspace has at least one $0$ or at least one negative sign.
%For example $(a, 0)$ or $(a, b, c, -a)$ are typical elements of anti-synchrony subspaces.
%Note that we write the typical element with as few negative signs as possible, so we use $(a, b)$ instead of $(a, -b)$, 
%and $(a, 0)$ instead of $(a, -0)$.

The typical element of a minimally tagged polydiagonal subspace has at least one $0$ component, and no negative signs.  
%For example, $(a, b, 0)$ is a typical element of a minimally tagged polydiagonal subspace.
The typical element of a fully tagged polydiagonal subspace has a component equal to $-a$ whenever there is a component equal to $a$, and the same for $b$, $c$, etc.  There can be any number of $0$s, including all $0$s or no $0$s at all.  
%For example, $(a, b, 0, -b, a, -a)$ is the typical element of a fully tagged polydiagonal subspace.
Finally, the typical element of an evenly tagged polydiagonal subspace has the same number of $-a$'s as there are $a$'s, and the same for $b$, $c$, etc.  
%For example, $(a, b, 0, -b, a, -a, -a)$ is an evenly tagged polydiagonal subspace.
\end{remk}

\begin{remk}
Definition~\ref{tagged} of a tagged partition is equivalent to \cite[Definition 3.1]{AD2020}, which is given in terms of integers $p$, $q$ and $r$ with $0 \leq q \leq p$ and $r \in \{0, 1\}$.
 Their tagged partition
\[
\mathcal{P}=\{P_k \mid 1 \leq k \leq p\} \cup \{\overline{P_l} \mid 1 \leq l \leq q\} \cup \{ P_m \mid 0 \le m \le r-1 \} 
\]
contains $p+q+r$ classes. The connection with our definition is that $P_l^*  = \overline{P_l}$ and $\overline{P_l}^{\,*}  =  P_l$ for $ 1 \leq l \leq q$, $P_k \not \in \tilde \calP$ for $k > q$, and $P_0^* = P_0$ if $r = 1$.

Definition~\ref{TypesOfPolydiagonals} can be written in terms of the notation in \cite{AD2020} as follows:
A synchrony subspace has $q = 0$ and $r = 0$, an anti-synchrony subspace has $q>0$ or $r = 1$, a minimally tagged subspace has $q = 0$ and $r = 1$, 
a fully tagged subspace has $p = q$, and an evenly tagged subspace has $p = q$ and $\# P_k = \# P_k^*$ for all $k$.
\end{remk}

\begin{remk}
The reader familiar with \cite{AD2020, Pivato} will
know that they use the word ``polydiagonal'' differently.  
We have chosen our convention to better distinguish the different types of subspaces that will play a role throughout this paper. 
We summarize the differences below:
\begin{center}
\begin{tabular}{ll}
our terminology & terminology of \cite{AD2020}\\
\hline
polydiagonal subspace & generalized polydiagonal subspace\\
synchrony subspace & polydiagonal subspace \\
anti-synchrony subspace & generalized polydiagonal subspace for a \\
& non-standard tagged partition \\
\end{tabular}
\end{center}
\end{remk}

\begin{figure}
\includegraphics{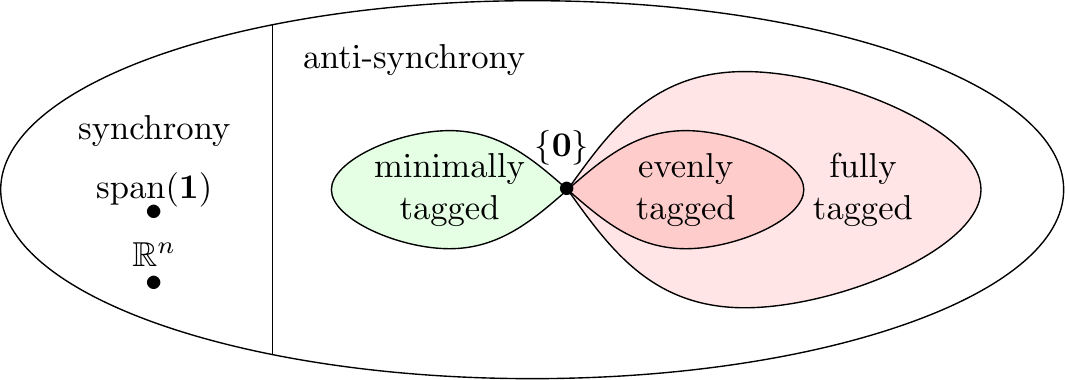}
\caption{
\label{classification}
Classification of the polydiagonal subspaces of 
$\R^n$, with $n \in \N$.
}
\end{figure}

\begin{ex}
For $n \in \N$, $\R^n$ and $\spn(\1)$ are synchrony subspaces, 
and the trivial subspace $\{\0\}$ is an evenly and minimally tagged anti-synchrony subspace of $\R^n$.
The fully synchronous subspace $\spn(\1)$ plays an important role in the theory of coupled cell networks.
The trivial subspace is the only anti-synchrony subspace that is both minimally and fully tagged.  
The subsets of polydiagonal subspaces are shown in Figure~\ref{classification}.
\end{ex}

\begin{ex}
\label{R0}
The only polydiagonal subspace of $\mathbb{R}^0=\{()\}$ is $\mathbb{R}^0=\Delta_\mathcal{P}$, where $\mathcal{P}=\emptyset$ is the empty partition of $C=\emptyset$. Hence the partial involution is the empty function. So $\mathbb{R}^0$ is an evenly, and fully tagged synchrony subspace but not an anti-synchrony subspace and not minimally tagged.
\end{ex}

Note that the set of polydiagonal subspaces is partitioned into synchrony subspaces and anti-synchrony subspaces.

The set of polydiagonal subspaces of $\R^n$ ordered by reverse inclusion is a lattice,
as shown in  \cite{Aguiar&Dias,AD2020}. 
Recall that a lattice is a partially ordered set in which every two elements have a unique least upper bound and a unique greatest lower bound.
Code to compute the lattice of polydiagonal subspaces is provided at \cite{ourGitHub}.

\begin{figure}
\begin{center}
\begin{tabular}{llll}
$i$ & $\mathcal{P}_i$ &$\Delta_{\mathcal{P}_i}$ & type of subspace  \tabularnewline
\hline
0 & $\{\{1,2\}^*=\{1,2\}\}$ &$\{(0,0)\}$  & trivial \tabularnewline
1 & $\{\{1\}, \{1\}^*=\{2\}\}$ &$\{(a,-a)\mid a\in\mathbb{R}\}$ & evenly tagged anti-synchrony \tabularnewline
2 & $\{\{1\}^*=\{1\},\{2\}\}$ &$\{(0,a)\mid a\in\mathbb{R}\}$ & minimally tagged anti-synchrony \tabularnewline
3 & $\{\{1\},\{2\}^*=\{2\}\}$ &$\{(a,0)\mid a\in\mathbb{R}\}$ & minimally tagged anti-synchrony \tabularnewline
4 & $\{\{1,2\}\}$     &$\{(a,a)\mid a\in\mathbb{R}\}$ & synchrony \tabularnewline
5 & $\{\{1\},\{2\}\}$ &$\{(a,b)\mid a,b\in\mathbb{R}\}$ & synchrony \tabularnewline
\hline
\end{tabular}

\hfil
\begin{tikzpicture}[xscale=.9]
\node (5) at (0,0) {$\Delta_{\mathcal{P}_5}$};
\node (1) at (-1.5,1) {$\Delta_{\mathcal{P}_1}$};
\node (3) at (-0.5,1) {$\Delta_{\mathcal{P}_2}$};
\node (2) at (0.5,1) {$\Delta_{\mathcal{P}_3}$};
\node (4) at (1.5,1) {$\Delta_{\mathcal{P}_4}$};
\node (0) at (0,2) {$\Delta_{\mathcal{P}_0}$};
\draw (0)--(1);
\draw (0)--(2);
\draw (0)--(3);
\draw (0)--(4);
\draw (1)--(5);
\draw (2)--(5);
\draw (3)--(5);
\draw (4)--(5);
\end{tikzpicture}
\hfil
\includegraphics{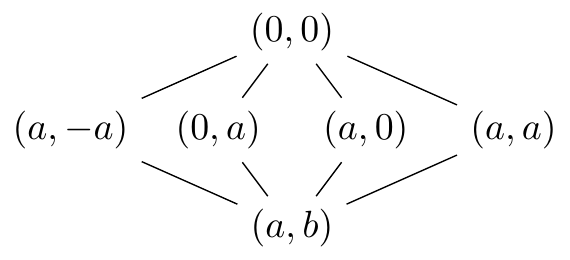}
\end{center}

\caption{\label{fig:ntwo}
The polydiagonal subspaces of $\R^2$. 
Subspaces are indicated by their typical elements
in the second Hasse diagram of the lattice. 
}
\end{figure}

\begin{ex}
\label{polyR2}
There are six polydiagonal subspaces of $\mathbb{R}^{2}$, as shown with their characterizations in Figure~\ref{fig:ntwo}. The lattice of these subspaces is also shown.
\end{ex}

\begin{ex}
\label{PolyR3} 
There are 24 polydiagonal subspaces of $\mathbb{R}^{3}$, divided into
5 synchrony subspaces and 19 anti-synchrony subspaces as shown in Figure~\ref{PolyR3Fig}.

\begin{figure}
$$
\begin{array}{ll}
\hline
\text{5 synchrony: } & (a,a,a), (a,b,b), (a,b,a), (a, a, b), (a,b,c)  \\
\text{9 nontrivial minimally tagged:}  &  (a, 0,0), (0, a, 0), (0, 0, a)  \\
& (0,a,a), (a, 0, a), (a, a, 0)  \\
& (0,a,b), (a, 0, b), (a, b, 0)  \\
\text{4 evenly tagged: } &   (0,0,0), (0, a, -a),(a, 0, -a), (a, -a, 0)  \\
\text{3 fully but not evenly tagged: } &  (a, -a, -a),(a, -a, a), (a, a, -a)  \\
\text{3 other anti-synchrony: } &  (a,b,-b), (a, b, -a), (a, -a, b) \\
\hline
\end{array}
$$
\caption{
\label{PolyR3Fig}
Typical elements of the 24 polydiagonal subspaces of $\R^3$.}
\end{figure}
\end{ex}

\begin{prop}
\label{orthodiagonal}
A polydiagonal subspace $\Delta_\mathcal{P}$ is an evenly tagged anti-synchrony subspace if and only if $\1 \in \Delta_\mathcal{P}^\perp$.
\end{prop}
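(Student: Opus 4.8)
The plan is to test membership $\1 \in \Delta_\mathcal{P}^\perp$ against a concrete spanning set of $\Delta_\mathcal{P}$. First I would record the obvious generators: for each block $P \in \mathcal{P}$ let $\chi_P \in \R^n$ be its indicator vector (value $1$ on $P$, value $0$ elsewhere); then a vector lies in $\Delta_\mathcal{P}$ exactly when it is constant, say with value $c_P$, on each block, with $c_P = -c_{P^*}$ whenever $P \in \tilde{\mathcal{P}}$ (which forces $c_{P_0}=0$ at the fixed point, if any). Hence
\[
\Delta_\mathcal{P} = \spn\bigl(\{\chi_P : P \in \mathcal{P}\setminus\tilde{\mathcal{P}}\}\cup\{\chi_P - \chi_{P^*} : P \in \tilde{\mathcal{P}}\}\bigr),
\]
the fixed point contributing $\chi_{P_0}-\chi_{P_0^*}=\0$ and so nothing. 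Consequently $\1\perp\Delta_\mathcal{P}$ if and only if $\1$ is orthogonal to every generator on the right.

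The second step is the elementary computation of those inner products: $\langle\1,\chi_P\rangle = \#P \ge 1$ for an untagged block (never $0$), and $\langle\1,\chi_P-\chi_{P^*}\rangle = \#P-\#P^*$ for $P\in\tilde{\mathcal{P}}$. Thus $\1\in\Delta_\mathcal{P}^\perp$ holds precisely when there are no untagged blocks — i.e. the partial involution is fully defined — and $\#P=\#P^*$ for all $P\in\tilde{\mathcal{P}}$ (the latter being automatic for $P=P_0$). By Definition~\ref{TypesOfPolydiagonals} this pair of conditions is exactly the definition of evenly tagged.

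The last step is to match ``evenly tagged'' with ``evenly tagged anti-synchrony subspace''. One direction is immediate, since an evenly tagged anti-synchrony subspace is in particular evenly tagged, so $\1\in\Delta_\mathcal{P}^\perp$ by the previous step. For the other, assuming $\1\in\Delta_\mathcal{P}^\perp$ I would note $\Delta_\mathcal{P}$ is evenly tagged by the above, and that for $n\ge 1$ the nonempty set $C$ forces $\mathcal{P}$ to contain a block, which by full-definedness lies in $\tilde{\mathcal{P}}$, so the partial involution is nonempty and $\Delta_\mathcal{P}$ is an anti-synchrony subspace. I expect the only delicate points to be the bookkeeping around the fixed point $P_0$ — it contributes no generator and so imposes no size constraint, yet it must still lie in the domain for ``fully tagged'' — together with the degenerate case $n=0$, where $\R^0$ is vacuously evenly tagged but is not an anti-synchrony subspace and so must be set aside. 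Everything else reduces to a one-line inner-product calculation.
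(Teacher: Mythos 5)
Your proposal is correct and is essentially the paper's own argument in a different packaging: the indicator vector of an untagged class and the signed indicator $\chi_{[k]}-\chi_{[l]}$ of an unequally sized tagged pair, which the paper constructs as contradiction witnesses, are exactly your spanning-set generators, and the inner-product computations coincide. Your additional bookkeeping (the fixed block $P_0$ contributing no size constraint, the non-emptiness of the involution for $n\ge 1$ so that evenly tagged really gives an anti-synchrony subspace, and the degenerate case $\R^0$ where $\1\in\Delta_{\mathcal P}^\perp$ yet the subspace is a synchrony subspace) makes explicit some points the paper's proof leaves implicit, but does not change the substance.
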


\begin{proof} 
The forward direction of the statement 
follows from the observation that  $\1 \cdot x = \sum x_i$. 
Now assume $\1 \in \Delta_\mathcal{P}^\perp$. First, suppose that $\Delta_\mathcal{P}$ is not fully tagged. Then $[k]\in\mathcal{P}$ is untagged for some $k$. Define 
$$
x_i:=\begin{cases}
1 & \text{if } i\in [k] \\
0 & \text{otherwise}. 
\end{cases}
$$
Then $x\in\Delta_\mathcal{P}$ but $\1\cdot x=\#[k] \neq 0$, which is a contradiction.
Next, suppose that $\Delta_\mathcal{P}$ is fully but not evenly tagged. Then $\#[k]\neq\#[l]$ for some $[k],[l]\in\mathcal{P}$ satisfying $[k]^*=[l]$. Define 
$$
x_i:=\begin{cases}
1 &  \text{if } i\in [k] \\
-1&  \text{if } i\in [l] \\
0 & \text{otherwise}. 
\end{cases}
$$
Then $x\in\Delta_\mathcal{P}$ but $\1\cdot x=\#[k]-\#[l] \neq 0$, which is a contradiction.
\end{proof}

\begin{remk}
Proposition \ref{orthodiagonal} shows that evenly tagged anti-synchrony subspaces are characterized as polydiagonal subspaces that are \emph{orthodiagonal}, meaning they are orthogonal to $\1$. These orthodiagonal subspaces are the main subject of this paper.
\end{remk}

\section{Dynamically Invariant Subspaces}\label{Dynamicallyinvariantsubspaces}

A linear subspace $W \subseteq (\R^k)^n$ is 
\emph{dynamically invariant for System} (\ref{ODE}), 
or simply \emph{dynamically invariant},
provided that  
the solution to the ODE with any initial condition
in $W$ stays in $W$ on the interval of existence of that solution.

Let $\mathcal{P}$ be a tagged partition of $C=\{1,\ldots,n\}$.  Recall that the \emph{tensor product} of $\R^k$ with $\Delta_{\mathcal P}$ is
$$
\R^k \otimes \Delta_{\mathcal P} := \{ x \in (\R^k)^n \mid x_i = x_j \text{ if } [i] = [j] \text{ and } x_i = - x_j \text{ if } [i]^* = [j]  
\} .
$$

Note that if $\Delta_\mathcal{P}$ is an $M$-invariant polydiagonal subspace of $\R^n$, then 
\begin{equation}
\label{Minv}
\left ( H \sum_{j=1}^n M_{1,j} x_j,  \ldots, H \sum_{j=1}^n M_{n,j} x_j \right ) \in \R^k \otimes \Delta_\mathcal{P}
\end{equation}
for all $(x_1, \ldots, x_n) \in \R^k \otimes \Delta_\mathcal{P}$.
It follows that if $\Delta_\mathcal{P}$ is an $M$-invariant synchrony subspace of $\R^n$, then $\R^k \otimes \Delta_\mathcal{P}$ is dynamically invariant for any choice of 
$f: \R^k\to \R^k$. 
These dynamically invariant subspaces are defined by equations of the form 
$x_i = x_j$.

However, $\R^k \otimes \Delta_\mathcal{P}$ is not dynamically invariant for System (\ref{ODE}) for all $f$ if $\Delta_\mathcal{P}$ is an $M$-invariant anti-synchrony subspace of $\R^n$.
There are two restrictions to the functions $f$ that are important given this fact. 

If $f: \R^k\to \R^k$ satisfies $f(0) = 0$ and 
$\Delta_\mathcal{P}$ is an $M$-invariant synchrony subspace or 
minimally tagged anti-synchrony subspace, then
$\R^k \otimes \Delta_\mathcal{P}$ is dynamically invariant.
These dynamically invariant subspaces are defined by equations of the form 
$x_i = x_j$ and $x_i = 0$, but not $x_i = -x_j$ for $i \neq j$.  

A stronger restriction is that $f$ is \emph{odd}, meaning that $f(-x_i) = - f(x_i)$ for all $x_i \in \R^k$.  If $f$ is odd and $\Delta_\mathcal{P}$ is $M$-invariant, then $\R^k \otimes \Delta_\mathcal{P}$ is dynamically invariant.

The last few paragraphs have described the obvious consequences of Equation~(\ref{Minv}).
The next theorem says that the converse statements are also true.

\begin{prop}
\label{dynamicallyInvariant}
Let $W$ be a subspace of $(\R^k)^n$, and consider the dynamics of 
System~$(\ref{ODE})$ for fixed $M \in \R^{n \times n}$ and fixed nonzero $H \in \R^{k \times k}$.
\begin{enumerate}
\item $W$ is dynamically invariant for all odd $f$ if and only if 
$W = \R^k \otimes \Delta_\mathcal{P}$ for some $M$-invariant polydiagonal
subspace  
$\Delta_\mathcal{P}$.  
\item $W$ is dynamically invariant for all $f$ that satisfy $f(0) = 0$ if and only if 
$W = \R^k \otimes \Delta_\mathcal{P}$ for some $M$-invariant 
synchrony subspace or minimally tagged anti-synchrony subspace
$\Delta_\mathcal{P}$.  
\item $W$ is dynamically invariant for all $f$ if and only if 
$W = \R^k \otimes \Delta_\mathcal{P}$ for some $M$-invariant synchrony
subspace  
$\Delta_\mathcal{P}$.  
\end{enumerate}
\end{prop}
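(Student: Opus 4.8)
The plan is to prove each of the three equivalences by establishing the non-obvious (``only if'') direction, since the converse directions are precisely the ``obvious consequences of Equation~(\ref{Minv})'' already spelled out in the text. In all three cases the strategy is the same: assume $W$ is dynamically invariant for the relevant class of functions $f$, and extract from this (a)~that $W$ must be of the form $\R^k \otimes \Delta$ for some subspace $\Delta \subseteq \R^n$, in fact a polydiagonal subspace $\Delta_\mathcal{P}$, and (b)~that $\Delta_\mathcal{P}$ must be $M$-invariant, with the appropriate extra structure (synchrony, or minimally tagged) forced by the size of the class of $f$'s. For step~(b), once we know $W = \R^k \otimes \Delta_\mathcal{P}$, the simplest move is to specialize to $f \equiv 0$ (which is odd and satisfies $f(0)=0$, so it lies in all three classes): then System~(\ref{ODE}) becomes the linear system $\dot x = (I_n \otimes \text{``}H\text{''})$-type map $x \mapsto \big(H\sum_j M_{1,j}x_j, \ldots\big)$, dynamical invariance of $W$ under this linear flow is equivalent to invariance under the linear map itself, and using that $H \neq 0$ one deduces $M$-invariance of $\Delta_\mathcal{P}$ in $\R^n$ (pick a vector $w$ with $Hw \neq 0$ and feed in $v \otimes w$ for $v \in \Delta_\mathcal{P}$). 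The extra constraints are forced by reversing the three implications recalled just before the proposition: if $W$ is invariant for all $f$ vanishing at $0$ but $\Delta_\mathcal{P}$ has a tagged pair $[i]^* = [j]$ with $i \neq j$, pick $f$ with $f(a) = b$, $f(-a) = b$ (possible since we only need $f(0)=0$, not oddness) supported appropriately, and check the $i$th and $j$th components of $F$ cannot both stay consistent with $x_i = -x_j$; similarly if $W$ is invariant for all $f$ whatsoever but $\Delta_\mathcal{P}$ has any $0$-component or $-1$ relation, a generic $f$ with $f(0) \neq 0$ breaks it.

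The genuinely delicate part — and the main obstacle — is step~(a): showing that a dynamically invariant $W$ must be a \emph{tensor product} $\R^k \otimes \Delta$ at all, and that $\Delta$ must be polydiagonal (defined by equations $x_i = \pm x_j$ and $x_i = 0$) rather than some arbitrary subspace of $\R^n$. For the tensor structure, the key observation is that the internal dynamics term $(f(x_1), \ldots, f(x_n))$ acts ``diagonally'' and identically in each cell, while $H$ acts identically across cells too; by choosing $f$ cleverly (e.g. linear $f = \lambda I$ with varying $\lambda$, together with $f$'s that are localized near chosen points) one should be able to show that if $(x_1,\ldots,x_n) \in W$ then so is the result of applying any common linear map to all the $x_i$, and more, forcing $W$ to be spanned by ``decomposable'' tensors $v \otimes w$; the cleanest route is probably to first handle $k=1$ and then bootstrap, or to argue directly that $W = \R^k \otimes \Delta$ where $\Delta = \{v \in \R^n : v \otimes w \in W \text{ for all } w\}$ and show this recovers all of $W$. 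To then see $\Delta$ is polydiagonal: invariance under \emph{all} odd $f$ (including highly nonlinear ones) should force $\Delta$ to be closed under coordinate-wise operations that only a polydiagonal subspace tolerates — concretely, if $\Delta$ contained a vector with two unrelated coordinates $v_i, v_j$ (neither equal nor negatives, neither zero) one builds an odd $f$ acting nonlinearly so that $f(v_i)$ and $f(v_j)$ destroy whatever linear relation $\Delta$ imposes, contradicting invariance; iterating this pins $\Delta$ down to be cut out purely by relations $x_i = x_j$, $x_i = -x_j$, $x_i = 0$, i.e. $\Delta = \Delta_\mathcal{P}$.

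A subtle point requiring care throughout is that dynamical invariance is a statement about \emph{trajectories}, not just about the vector field pointing into $W$; since $W$ is a linear subspace and $F$ is smooth, invariance of $W$ is equivalent to the tangency condition $F(W) \subseteq W$ (a standard fact: a linear subspace is flow-invariant iff the vector field is tangent to it, using uniqueness of solutions), so one may work purely algebraically with $F(x) \in W$ for all $x \in W$. I would state this reduction once at the start of the proof. Another point: the proposition fixes $M$ and fixes a \emph{single} nonzero $H$, so all the test functions $f$ must do their job against that one $H$ — this is why the choice of a vector $w$ with $Hw \neq 0$ in step~(b) matters, and why in the localized-$f$ constructions of step~(a) one must make sure the coupling term $H\sum_j M_{i,j} x_j$, which is not under our control, does not accidentally rescue a relation we are trying to break; handling this bookkeeping carefully is where most of the real work lies.
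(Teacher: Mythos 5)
Your skeleton is right where the paper's is: the converse directions are the content, dynamical invariance of a linear subspace reduces to the tangency condition $F(W)\subseteq W$, the $M$-invariance of $\Delta_\mathcal{P}$ comes from the case $f=0$ together with a vector $x$ with $Hx\neq 0$, and parts (2) and (3) follow from part (1) by testing one more $f$ (the paper uses $f(x)=\|x\|^2\,\1$ and $f(x)=\1$; your choices would serve the same purpose). But your step (a) — the claim that a dynamically invariant $W$ must equal $\R^k\otimes\Delta_\mathcal{P}$ for a \emph{polydiagonal} $\Delta_\mathcal{P}$ — is where the actual proof lives, and you have left it as a plan ("one should be able to show", "iterating this pins $\Delta$ down") rather than an argument. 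Two concrete ideas are missing. First, the coupling term you worry about at the end can be eliminated outright: tangency is linear in the vector field, and all systems in the class share the same coupling term, so the difference of two admissible vector fields (odd $f_1$, odd $f_2$) is the uncoupled field $x\mapsto(f(x_1),\dots,f(x_n))$ with $f=f_1-f_2$ odd; hence $W$ is tangent-invariant under \emph{every} uncoupled odd diagonal map. This disposes of the "bookkeeping" you flag as the main difficulty. Second, and more importantly, your route to polydiagonality is not carried out and, as sketched, would not suffice: closure under common linear maps does give $W=\R^k\otimes\Delta$ (and is vacuous for $k=1$), but the relations cutting out a general invariant $\Delta$ need not be pairwise relations between two coordinates — they can involve many coordinates (e.g.\ $x_1+x_2+x_3=0$) — so an argument that "builds an odd $f$ destroying whatever linear relation $\Delta$ imposes" pair by pair does not pin $\Delta$ down.

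The paper's resolution, which you would need some version of, is to define $\R^k\otimes\Delta_\mathcal{P}$ as the \emph{smallest} polydiagonal tensor subspace containing $W$ and then prove the reverse inclusion $\R^k\otimes\Delta_\mathcal{P}\subseteq W$ in one stroke: by minimality, for each pair with $[i]\neq[j]$ (resp.\ $[i]^*\neq[j]$) the set of $w\in W$ with $w_i=w_j$ (resp.\ $w_i=-w_j$) is a proper subspace of $W$, so $W$ minus finitely many proper subspaces is nonempty and contains a generic $w$ realizing exactly the relations of $\mathcal{P}$; then for an arbitrary target $u\in\R^k\otimes\Delta_\mathcal{P}$ one interpolates a function $g$ with $g(w_i)=u_i$, $g(-w_i)=-u_i$ (well defined precisely because $w$ realizes only the relations of $\mathcal{P}$), symmetrizes to the odd map $f(x)=(g(x)-g(-x))/2$, and concludes $u=(f(w_1),\dots,f(w_n))\in W$ by tangency of the uncoupled field. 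Without this generic-element-plus-odd-interpolation device (or an equivalent), your step (a) remains a heuristic outline, so as written the proposal has a genuine gap at the heart of the "only if" direction.
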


\begin{proof}
In all three cases, if $\Delta_\mathcal{P}$ is a polydiagonal
subspace satisfying the given conditions (e.g. synchrony or minimally tagged anti-synchrony) then it is clear
that $\R^k \otimes \Delta_\mathcal{P}$ is invariant under
the dynamics of System~(\ref{ODE}) with the given class of $f$.

Conversely, assume first that $W$ is invariant under
the dynamics of System~(\ref{ODE}) for all odd functions $f$. 
It follows that $W$ is also invariant under the dynamics of all differences of systems of the form (\ref{ODE}),
which is all ODEs of the form ${\dot x}_i = f(x_i)$ with $f$ odd.

Let $\R^k \otimes \Delta_\mathcal{P}$ equal the intersection of all subspaces of the form $\R^k \otimes \Delta_\mathcal{Q}$ with $\Delta_\mathcal{Q}$ a polydiagonal subspace, that contain $W$.
We claim that $W$ contains an element $w$ satisfying $w_i = w_j$ if and only if $[i] = [j]$, and $w_i = -w_j$ if and only if $[i]^* = [j]$ in $\mathcal{P}$.
To see why, pick a pair $(i,j)$ for which $[i] \not= [j]$.
It follows that there is an element $w \in W$ for which $w_i \not= w_j$,
as this would otherwise contradict minimality of  $\R^k \otimes \Delta_\mathcal{P}$.
The same argument shows that for any pair $(i,j)$ for which $[i]^* \not= [j]$ an element 
$w \in W$ exists for which $w_i \not= -w_j$.
We conclude that the set of $w \in W$ satisfying $w_i \not= (-)w_j$ for all pairs $(i,j)$ with $[i]^{(*)} \not= [j]$ is given by $W$ with a finite number of strict subspaces cut out, and is therefore non-empty.
In what follows we fix such an element $w = (w_1, \dots, w_n) \in W$.

Next, we pick an element $u \in \R^k \otimes \Delta_\mathcal{P}$.
Let $g: \R^k \rightarrow \R^k$ be a function (odd or otherwise) satisfying $g(w_i) = u_i$ and $g(-w_i) = -u_i$ for all $i$.
This is well-defined as $w_i = w_j$ implies $[i] = [j]$ and so $u_i = u_j$.
Likewise $w_i = -w_j$ implies $[i]^* = [j]$ and so $u_i = -u_j$.
We now define the odd function $f$ by $f(x) := (g(x) - g(-x))/2$.
It follows that $f(w_i) = (g(w_i) - g(-w_i))/2 = (u_i + u_i)/2 = u_i$ for all $i$,
so that the map $(x_1, \dots, x_n) \mapsto (f(x_1), \dots, f(x_n))$ sends $w$ to $u$.
As this latter map sends elements of $W$ into $W$, and because $w \in W$, 
we conclude that $u \in W$.
The element $u \in \R^k \otimes \Delta_\mathcal{P}$ was chosen freely,
from which it follows that $\R^k \otimes \Delta_\mathcal{P} \subseteq W$ 
and so $W = \R^k \otimes \Delta_\mathcal{P}$.
To show that $\Delta_\mathcal{P}$ is $M$-invariant, 
we pick nonzero $x, y \in \R^k$ such that $Hx = y$.
Setting $f=0$, invariance of $W = \R^k \otimes \Delta_\mathcal{P}$ implies that
\begin{equation*}
\sum_{j=1}^n M_{i,j} s_jy  = H \sum_{j=1}^n M_{i,j} s_jx  = H \sum_{j=1}^n M_{k,j} s_jx  = \sum_{j=1}^n M_{k,j} s_jy 
\end{equation*}
for all $s = (s_j) \in \Delta_\mathcal{P}$ and $i,k$ such that $[i] = [k]$. Likewise, we have
\begin{equation*}
\sum_{j=1}^n M_{i,j} s_jy  = H \sum_{j=1}^n M_{i,j} s_jx  = -H \sum_{j=1}^n M_{k,j} s_jx  = -\sum_{j=1}^n M_{k,j} s_jy 
\end{equation*}
for all $s = (s_j) \in \Delta_\mathcal{P}$ and $i,k$ such that $[i]^* = [k]$.
As $y$ is nonzero, we conclude that $\Delta_\mathcal{P}$ is indeed $M$-invariant.

If $W$ is dynamically invariant for all $f$ with $f(0) = 0$, then it is invariant for all odd $f$.
It follows that $W = \R^k \otimes \Delta_\mathcal{P}$ for some polydiagonal subspace $\Delta_\mathcal{P}$.
The choice $f(x)  =  \| x \|^2 \, \1$ shows that $\Delta_\mathcal{P}$ is a synchrony subspace or a minimally tagged anti-synchrony subspace. 

When there are no conditions on $f$, the constant map $f(x) = \1$ excludes 
anti-synchrony subspaces.
\end{proof}

\begin{remk}
\label{balancedALinvariant}
The concept of a balanced partition of a coupled cell network is central to the seminal work of \cite{Pivato}.  The notion has been generalized in several ways by several papers.
Suppose $G$ is a weighted cell network with cells $C$, then \cite{AD2020} defines four types of tagged partitions in terms of their invariance under the adjacency matrix $A$ or Laplacian matrix $L$ as follows. 
Note that a partition of the set of cells is essentially a tagged partition for which the partial involution is the empty function.
\begin{itemize}
\item
A partition $\mathcal P$ of $C$ is \emph{balanced} for $G$ if $\Delta_{\mathcal P}$ is an $A$-invariant synchrony subspace.
\item
A partition $\mathcal P$ of $C$ is \emph{exo-balanced} for $G$ if $\Delta_{\mathcal P}$ is an $L$-invariant synchrony subspace.
\item
A tagged partition $\mathcal P$ of $C$ is \emph{even-odd-balanced} for $G$ if $\Delta_{\mathcal P}$ is an $A$-invariant anti-synchrony subspace.
\item
A tagged partition $\mathcal P$ of $C$ is \emph{linear-balanced} for $G$ if $\Delta_{\mathcal P}$ is an $L$-invariant anti-synchrony subspace.
\end{itemize}
We have decided to use terminology such as ``$L$-invariant anti-synchrony subspace'' instead, as we deem it more transparent for the goals of this paper.
\end{remk}

\begin{remk}
It is interesting that the class of odd-balanced subspaces defined in \cite{NSS6} is the only type of invariant subspaces for networks that does not have a characterization 
in terms of $M$-invariance for some matrix or set of matrices, as far as we know.  
Nevertheless, the odd-balanced subspaces are proved to be evenly tagged in \cite{NSS6}.
\end{remk}

\begin{ex}
Consider the digraph with 2 vertices and no arrows.  
All of the polydiagonal subspaces in Example~\ref{polyR2} are $A$-invariant, and $L$-invariant,
since both are the $2 \times 2$ zero matrix.
System~(\ref{ODE}) is thus two uncoupled identical ODEs $\dot x_i = f(x_i)$ 
for $i \in \{1,2\}$.
One direction of the statements in Proposition~\ref{dynamicallyInvariant} is obvious.
Subspaces 
$\R^k \otimes \Delta_{\mathcal{P}_4}$ and $\R^k \otimes \Delta_{\mathcal{P}_5}$ 
are dynamically invariant for all $f$.  
If $f(0) = 0$ and $j\in\{0,2,3,4,5\}$, then $\R^k \otimes \Delta_{\mathcal{P}_j}$ is dynamically invariant.
If $f$ is odd, then $\R^k \otimes \Delta_{\mathcal{P}_j}$ for all $j \in \{0, \ldots, 5\}$.
Proposition~\ref{dynamicallyInvariant} also says that no other subspaces are
dynamically invariant for all functions $f$ in each of the three classes.
\end{ex}

\begin{remk}
\label{results_with_h}
Proposition \ref{dynamicallyInvariant} shows that 
$k = 1$ is the only case we need to consider to understand the dynamically invariant subspaces of System (\ref{ODE}).
While the restriction of linear coupling seems very restrictive, the dynamically  invariant subspaces of a coupled cell network with more general coupling are often characterized as $A$ or $L$-invariant generalized polydiagonal subspaces.
For example, the balanced subspaces of \cite{Pivato} are $A$-invariant synchrony subspaces, and these are dynamically invariant for all the admissible vector fields.  See also \cite{AD2020, NSS6}.
\end{remk}

\begin{figure}
\begin{center}
\begin{tabular}{c c c}
\includegraphics[width = 2.0in]{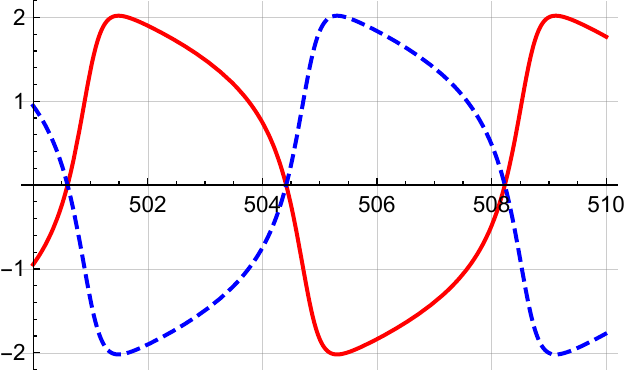} & 
\hspace{0.5cm} 
%This is a cluggie way to add padding.
& \includegraphics[width = 2.0in]{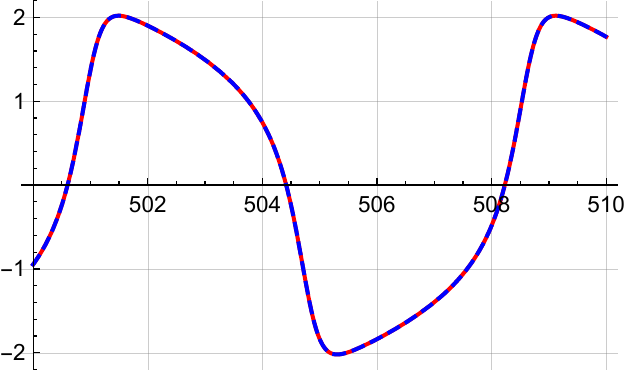}\\
%$M = A, \ \kappa = 0.5$ & & $M = L, \ \kappa = 0.5$ \\
$M = 0.5 A$ & & $M = 0.5 L$\\
 \\
\includegraphics[width = 2.0in]{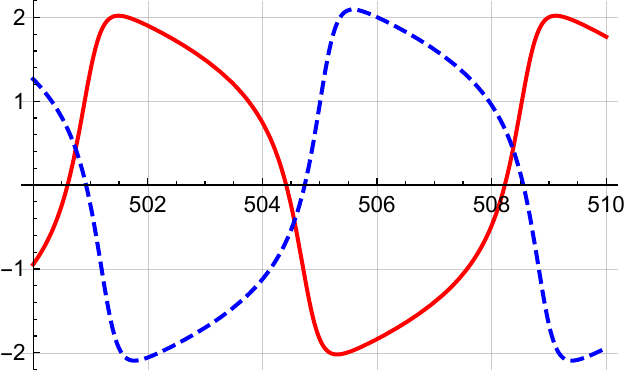} & & \includegraphics[width = 2.0in]{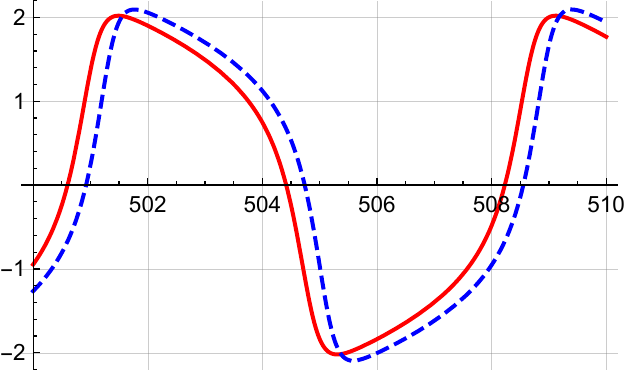}\\
%$M = A, \ \kappa = -0.5$ & & $M = L, \ \kappa = -0.5$ 
$M = -0.5 A$ & & $M = -0.5 L$
\end{tabular}

\caption{\label{vdpFig}
Numerical solutions of System (\ref{coupled_vdp}) for 2 coupled van der Pol oscillators 
with $\varepsilon = 2$, and $M$ couplings
defined in Example~\ref{coupled_vdp_ex}.
The red curves are graphs of $u_1$ vs.\ $t$,  and the dashed blue curves are graphs of $u_2$ vs.\ $t$. 
}
\end{center}
\end{figure}

\begin{ex}
\label{coupled_vdp_ex}
Proposition~\ref{dynamicallyInvariant}(1) applies
to the coupled van der Pol oscillators in Example~\ref{van_der_Pol}.
Consider this system with
the weighted digraph shown below, along with its adjacency matrix $A$ and Laplacian matrix $L$.
The lattices of $A$- and $L$-invariant subspaces for the two cases are also shown.
\begin{center}
	\begin{tabular}{cccc}
		\begin{tabular}{c}
			\includegraphics{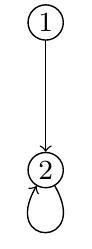}
		\end{tabular}  
& 
		\begin{tabular}{c}
			$A=\left[\begin{matrix}
			0 & 0\\
			1 & 1
			\end{matrix}\right]$
		\end{tabular}  
		\begin{tabular}{c}
			\includegraphics{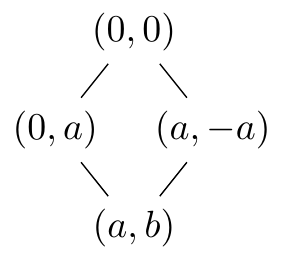}\\
		\end{tabular}

&
&
    	\begin{tabular}{c}
			$L=\left[\begin{matrix}
			0 & 0\\
			-1 & 1
			\end{matrix}\right]$
		\end{tabular}   
		\begin{tabular}{c}
			\includegraphics{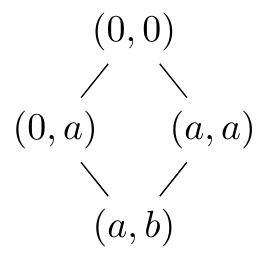}\\
		\end{tabular} \\
	\end{tabular}
	\par
\end{center}
Note that cell 1 is uncoupled. The question is whether cell 2 synchronizes with cell 1, anti-synchronizes, or neither.
Four numerical solutions to System~(\ref{coupled_vdp}) are shown in Figure~\ref{vdpFig}.
Mathematica notebooks for creating Figures~\ref{vdpFig} and \ref{LorenzFig}, 
as well as all of the lattices of invariant  subspaces shown in this paper,
are available at \cite{ourGitHub}.
For $M = 0.5 A$ the stable solution is in the anti-synchrony subspace,
and the synchrony subspace is not $A$-invariant.
Similarly, for $M = 0.5 L$ the stable solution is in the synchrony subspace,
and the anti-synchrony subspace is not $L$-invariant.
For both $M= -0.5 A$ and $M = -0.5 L$ the stable solution is neither synchronized nor anti-synchronized, and
the amplitude of cell 2 is slightly larger than the amplitude of cell 1.
The solutions all have the same random initial condition at $t = 0$, so the $u_1$ curves are the same.
However, the attractor in each case appears to be independent of initial conditions.
\end{ex}
Proposition~\ref{dynamicallyInvariant}(1) assumes that $f$ is odd.
The proposition applies to the coupled chaotic oscillators of \cite{Liu2006}.  
However,  it does not explain the anti-synchrony of the
coupled Lorenz attractors of Example~\ref{Lorenz}
observed in \cite{KimAntiSynch}.
Even though $f$ is not odd, it does satisfy
the condition
$f(Nx_i) = N f(x_i)$ 
for the matrix $N = \text{diag}(-1,-1,1)$.

For coupled cell networks in general,
the symmetry of the internal dynamics interacts with the symmetry of the network in a nontrivial way. 
It is natural to ask what invariant subspaces of System~(\ref{ODE}) are present for a given network for all functions $f$ with a prescribed symmetry.
This is a topic of great complexity \cite{Dionne_1996_I, Dionne_1996_II}, and we do not attempt to answer this question
in general.
However, we extend Proposition~\ref{dynamicallyInvariant}  with two results that apply to the coupled Lorenz equations.
The first result assumes a symmetry of the internal dynamics $f$, which need not be an involution.
\begin{prop}
\label{NH=H}
Let $F: (\R^k)^n \to (\R^k)^n$ be defined in terms of $f:\R^k \to \R^k$,  $H \in \R^{k \times k}$, and $M \in \R^{n \times n}$
as in System~(\ref{ODE}).
Assume that the matrix $N \in \R^{k \times k}$ satisfies 
$f(Nx_i) = Nf(x_i)$ for all $x_i \in \R^k$,
and $HN = NH = H$.
For each $\ell \in \{1, \ldots, n\}$, define $\gamma_\ell: (\R^k)^n \to (\R^k)^n$ by
$\gamma_\ell  (x) := (x_1, \ldots, N x_\ell, \ldots, x_n)$.
Then $F(\gamma_\ell(x)) = \gamma_\ell(F(x))$.
\end{prop}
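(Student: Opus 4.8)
The plan is a direct component-wise computation. Write $y = \gamma_\ell(x)$, so that $y_j = x_j$ for $j \neq \ell$ and $y_\ell = N x_\ell$. First I would observe that the coupling term is unchanged when we twist the $\ell$-th input slot: for every $i$, every summand $M_{i,j} y_j$ with $j \neq \ell$ already equals $M_{i,j} x_j$, and for the remaining term $H M_{i,\ell} y_\ell = M_{i,\ell} H N x_\ell = M_{i,\ell} H x_\ell$ by the hypothesis $HN = H$. Hence $H \sum_{j=1}^n M_{i,j} y_j = H \sum_{j=1}^n M_{i,j} x_j$ for all $i$.

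Then I would split into the cases $i \neq \ell$ and $i = \ell$. For $i \neq \ell$ we have $y_i = x_i$, so $F_i(y) = f(x_i) + H \sum_{j=1}^n M_{i,j} x_j = F_i(x)$, which is precisely the $i$-th component of $\gamma_\ell(F(x))$. For $i = \ell$, using the equivariance $f(N x_\ell) = N f(x_\ell)$ and then $NH = H$ we get $F_\ell(y) = N f(x_\ell) + H \sum_{j=1}^n M_{\ell,j} x_j = N f(x_\ell) + N H \sum_{j=1}^n M_{\ell,j} x_j = N F_\ell(x)$, which is the $\ell$-th component of $\gamma_\ell(F(x))$. Combining the two cases yields $F(\gamma_\ell(x)) = \gamma_\ell(F(x))$.

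There is no genuine obstacle here; the proof is short and purely algebraic. The only point to be careful about is that the two halves of the hypothesis $HN = NH = H$ play distinct roles: $HN = H$ is what makes the coupling sum insensitive to the twist in the $\ell$-th input, while $NH = H$ together with the $f$-equivariance is what makes the $\ell$-th output slot transform correctly under $N$. I would therefore make sure each identity is invoked in exactly the right place, and note that the result says $F$ is $\gamma_\ell$-equivariant, so in particular $\Fix(\gamma_\ell)$ and related subspaces are dynamically invariant.
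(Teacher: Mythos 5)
Your proposal is correct and follows essentially the same argument as the paper: first showing the coupling sum is unchanged via $HN = H$, then handling the cases $i \neq \ell$ and $i = \ell$ separately, with $NH = H$ and the equivariance of $f$ used in the $\ell$-th component. No gaps; the observation about where each half of the hypothesis is used matches the paper's proof exactly.
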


\begin{proof}
Note that
$$
H \sum_{j = 1}^n M_{i,j} (\gamma_\ell (x))_j = H M_{i,\ell} N x_\ell + H \sum_{j \neq \ell} M_{i,j}  x_j = H \sum_{j = 1}^n M_{i,j} x_j
$$
since $HN = H$.
If $i \neq \ell$, then 
$$
F_i(\gamma_l(x)) = f(x_i) + H \sum_{j = 1}^n M_{i,j} x_j = F_i(x).
$$
Furthermore, since $H = NH$ it follows that 
%J: The end-of-proof box is not right-justified with double dollar signs but it does with\[ \]
\[
F_\ell(\gamma_\ell(x)) = f(N x_\ell) + H \sum_{j = 1}^n M_{\ell,j} x_j = Nf(x_\ell) + N H \sum_{j = 1}^n M_{\ell,j} x_j  = NF_\ell(x). \qedhere
\]
\end{proof}

\begin{remk}
If the hypotheses of Proposition~\ref{NH=H} hold for System~(\ref{ODE}), and a subspace $W \subseteq (\R^k)^n$ is dynamically invariant, 
then both $\gamma_\ell(W)$ and the fixed-point subspace $\{x \in W \mid \gamma_\ell(x) = x \}$ are dynamically invariant subspaces.
See \cite{GSS} for background about equivariant dynamical systems.
\end{remk}

\begin{ex}
Assume the hypotheses of Proposition~\ref{NH=H} hold,  with
$N^2 = I$ and $N \neq I$.
Then the involutions $\gamma_\ell$ commute, and for each subset $S$ of $\{1, \ldots, n\}$ we define the involution $\gamma_S$ in the 
obvious way.  These involutions form the image of a $\Z_2^n$ action on $(\R^k)^n$.  
The proposition implies that $F(\gamma_S(x)) = \gamma_S(F(x))$ for every $S \subseteq \{1, \ldots, n\}$.  
We say that $F$ has $\Z_2^n$ symmetry.  Of course, $F$ can have more symmetry.

Let $\DeltaP$ be an $M$-invariant synchrony subspace of $\R^n$ with $p$ classes in $\mathcal P$ of sizes 
$n_1, \ldots, n_p$, where
$n_1 + \cdots +  n_p = n$.
Then $W := \R^k \otimes \DeltaP$ is dynamically invariant for System~(\ref{ODE}) by Proposition~\ref{dynamicallyInvariant} 
and $\gamma_S(W)$ is also dynamically
invariant.  
There are $2^{n_1-1} \cdots 2^{n_p - 1} = 2^{n-p}$ distinct invariant subspaces
in the group orbit of $W$.
The exponent of $n_i - 1$ avoids double counting by not acting on the first cell
in each equivalence class of $\mathcal P$.
In addition to these invariant subspaces in the group orbit of $W$,
the intersection of any invariant subspace with the fixed point subspace $\{x \in (\R^k)^n \mid \gamma_S (x) = x\}$ is an invariant subspace for each subset $S$.
For example, the choice $S = \{1, \ldots, n\}$ yields the dynamically invariant subspace $\{x \in W \mid x_i = N x_i \mbox{ for all } i\}$.
\end{ex}
 
The following proposition does not require that $f$ is odd but in some cases describes a dynamically invariant subspace of $(\R^k)^n$ corresponding to
an $M$-invariant anti-synchrony subspace of $\R^n$.

\begin{prop}
\label{NH=-H}
Let $f:\R^k \to \R^k$,  $H \in \R^{k \times k}$, and $M \in \R^{n \times n}$ define System (\ref{ODE}), and
let $\DeltaP$ be an $M$-invariant polydiagonal subspace of $\R^n$.
Assume that the matrix $N \in \R^{k \times k}$ satisfies $N^2 = I$, 
$f(Nx_i) = Nf(x_i)$ for all $x_i \in \R^k$,
and $HN = NH = -H$.
Then the subspace
\[
\Delta_{\mathcal{P}}^N := \{ ( x_1,  \ldots , x_n ) \in (\R^{k})^n \mid   x_i = x_j \mbox { if } [i] = [j], \mbox{ and } x_i = N x_j \mbox{ if } [i] = [j]^* \}
\]
is dynamically invariant for System (\ref{ODE}). 
\end{prop}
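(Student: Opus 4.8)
The plan is to reduce dynamic invariance to the algebraic statement $F(\Delta_{\mathcal{P}}^N) \subseteq \Delta_{\mathcal{P}}^N$. Since $\Delta_{\mathcal{P}}^N$ is cut out by homogeneous linear equations it is a linear subspace, and since $f$ is smooth the restriction of System~(\ref{ODE}) to $\Delta_{\mathcal{P}}^N$ is a well-posed ODE whose solutions, by uniqueness, coincide with those of the full system; hence tangency of $F$ to $\Delta_{\mathcal{P}}^N$ suffices. So I would fix $x = (x_1, \dots, x_n) \in \Delta_{\mathcal{P}}^N$ and check the two families of defining relations for $F(x)$, namely $F_i(x) = F_j(x)$ whenever $[i] = [j]$, and $F_i(x) = N F_j(x)$ whenever $[i] = [j]^*$.

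For the internal term $f(x_i)$ this is immediate: $[i] = [j]$ gives $x_i = x_j$ so $f(x_i) = f(x_j)$, and $[i] = [j]^*$ gives $x_i = Nx_j$ so $f(x_i) = f(Nx_j) = Nf(x_j)$ by hypothesis. The substance is the coupling term $H\sum_{\ell=1}^n M_{i,\ell}x_\ell$, and the \emph{key idea} is to use $HN = -H$ to project $x$ onto the $(-1)$-eigenspace of $N$. Setting $P_- := \tfrac{1}{2}(I-N)$, we have $H(I-P_-) = \tfrac{1}{2}(H + HN) = 0$, so $H = HP_-$ and hence $H\sum_\ell M_{i,\ell}x_\ell = H\sum_\ell M_{i,\ell}z_\ell$ where $z_\ell := P_- x_\ell$. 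I would then note that $z := (z_1,\dots,z_n)$ lies in $\R^k \otimes \Delta_{\mathcal{P}}$: the relation $[i]=[j]$ yields $z_i = z_j$, and $[i]=[j]^*$ yields $z_i = P_- x_i = P_- N x_j = -P_- x_j = -z_j$ using $P_- N = -P_-$ (a consequence of $N^2 = I$). Because $\Delta_{\mathcal{P}}$ is $M$-invariant, applying this invariance coordinatewise in $\R^k$ — the reasoning behind~(\ref{Minv}) — shows that $w$ defined by $w_i := \sum_\ell M_{i,\ell}z_\ell$ still satisfies $w_i = w_j$ when $[i]=[j]$ and $w_i = -w_j$ when $[i]=[j]^*$.

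Finally I would assemble the pieces, using $NH = -H$. With $F_i(x) = f(x_i) + Hw_i$: for $[i]=[j]$, $F_i(x) = f(x_j) + Hw_j = F_j(x)$; for $[i]=[j]^*$, $F_i(x) = Nf(x_j) - Hw_j = Nf(x_j) + NHw_j = N(f(x_j) + Hw_j) = NF_j(x)$, where the second equality uses $-H = NH$. Thus $F(x) \in \Delta_{\mathcal{P}}^N$, which completes the proof.

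I expect the only non-formal step to be the one in the second paragraph: recognizing that each $x_\ell$ must be replaced by its projection $z_\ell = P_- x_\ell$ onto the $(-1)$-eigenspace of $N$. The relations ``$x_i = Nx_j$'' are not of polydiagonal type, so they do not interact with $M$-invariance of $\Delta_{\mathcal{P}}$ directly; after projection they become honest sign-flips $z_i = -z_j$, and crucially the coupling term is unaffected by the projection precisely because $HN = -H$. Everything downstream is formal manipulation with $N^2 = I$ and $HN = NH = -H$.
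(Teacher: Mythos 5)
Your proof is correct and follows essentially the same route as the paper's: convert the $N$-twisted relations into genuine sign relations, apply $M$-invariance of $\Delta_{\mathcal P}$ coordinatewise to the coupling term, and reassemble using $NH=-H$, handling the internal term $f$ exactly as the paper does. The only cosmetic difference is that the paper applies $H$ itself to each coordinate (so $Hx_i = HNx_j = -Hx_j$ places $(Hx_1,\ldots,Hx_n)$ in $\R^k\otimes\Delta_{\mathcal P}$), whereas you interpose the projection $P_-=\tfrac12(I-N)$ together with the identity $H=HP_-$; both achieve the same reduction.
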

\begin{proof}
Let $x = ( x_1,  \ldots , x_n ) \in \Delta_{\mathcal{P}}^N$ be given. 
We will first show that $(Hx_1,  \ldots , Hx_n) \in \R^k \otimes \DeltaP$. 
To this end, let $i,j \in \{1, \dots, n\}$ be such that $[i] = [j]$.
It follows that $x_i = x_j$ and so $Hx_i = Hx_j$.
If on the other hand we have $[i] = [j]^*$, then $x_i = N x_j$.
It follows that $Hx_i = HNx_j = -Hx_j$,
and we conclude that indeed $(Hx_1,  \ldots , Hx_n) \in \R^k \otimes \DeltaP$.
Because of this, we see that likewise 
\begin{align*}
\bigg(H\sum_{k=1}^n M_{1,k}x_k, \dots ,H\sum_{j=1}^n M_{n,k}x_k\bigg) = \bigg(\sum_{k=1}^n M_{1,k}Hx_k, \dots ,\sum_{j=1}^n M_{n,k}Hx_k\bigg) \in
\R^k \otimes \DeltaP\, .
 \end{align*}
Therefore, if we have $[i] = [j]$ then also $H\sum_{k=1}^n M_{i,k}x_k = H\sum_{k=1}^n M_{j,k}x_k$. 
If $[i] = [j]^*$ then $H\sum_{k=1}^n M_{i,k}x_k = -H\sum_{k=1}^n M_{j,k}x_k = NH\sum_{k=1}^n M_{j,k}x_k$.
We conclude that 
\begin{align}\label{1partdeltapninv}
\bigg(H\sum_{k=1}^n M_{1,k}x_k, \dots ,H\sum_{j=1}^n M_{n,k}x_k\bigg)  \in  \Delta_{\mathcal{P}}^N\, .
 \end{align}
 Of course $x_i = x_j$ implies $f(x_i) = f(x_j)$
 and $x_i = Nx_j$ gives $f(x_i) = f(Nx_j) = Nf(x_j)$.
 From this we see that likewise
 \begin{align}\label{2partdeltapninv}
(f(x_1), \dots, f(x_n)) \in  \Delta_{\mathcal{P}}^N\, .
\end{align}
The result 
follows from Equations (\ref{1partdeltapninv}) and (\ref{2partdeltapninv}).
\end{proof}

Note that $\Delta_{\mathcal P}^N = \R^k \otimes \DeltaP$ if $N = -I$ or $ \DeltaP$ is a synchrony subspace.

\begin{ex}
\label{Lorenz2Hs}
Two different coupling matrices illustrate Propositions~\ref{NH=H} and ~\ref{NH=-H} 
for the Lorenz equation $f$ defined in Example~\ref{Lorenz}.
Let
$$
H_+ = \left[ \begin{smallmatrix} 0 & 0 &  0 \\ 0 & 0 &  0 \\  0 & 0 &  1  \end{smallmatrix} \right],  \ 
H_- = \left[ \begin{smallmatrix}  0 & 0 &  0 \\ 0 & 1 &  0 \\  0 & 0 &  0 \end{smallmatrix} \right], \ \mbox{and }
N = \left[ \begin{smallmatrix} -1 & 0 &  0 \\ 0 & -1 &  0 \\  0 & 0 &  1 \end{smallmatrix} \right].
$$
Note that $f(N x_i) = N f(x_i)$ and $H_+N = N H_+  = H_+$, so the hypotheses of Proposition~\ref{NH=H} applies if
$H = H_+$.  Furthermore $N^2 = I$ and $H_- N = N H_-  = H_-$, so Proposition~\ref{NH=-H} applies if $H = H_-$.
System~(\ref{ODE}) in the two cases $H = H_+$ and $H = H_-$ respectively, is
\begin{eqnarray}
\label{HzLorenz} 
\dot{u}_i = 10(v_i - u_i) , \quad \dot{v}_i = u_i (28 - w_i) - v_i, \quad
\dot{w}_i = u_i v_i - {\textstyle \frac 8 3} w_i + \sum_{j = 1}^n M_{i,j} w_j , \\
\label{HyLorenz} 
\dot{u}_i = 10(v_i - u_i) , \quad \dot{v}_i = u_i (28 - w_i) - v_i + \sum_{j = 1}^n M_{i,j} v_j , \quad
\dot{w}_i = u_i v_i - {\textstyle \frac 8 3} w_i.
\end{eqnarray}
Note that we use $x_i = (u_i, v_i, w_i) \in \R^3$ in place of the traditional $(x_i,y_i,z_i) \in \R^3$ to respect our notation of $x_i \in \R^k$.

Figure~\ref{LorenzFig} shows solutions to these systems with two-way symmetric Laplacian coupling.
The digraph for this coupling is shown below, along with the Laplacian matrix $L$ and the lattice of $L$-invariant
polydiagonal subspaces:
\begin{center}
	\begin{tabular}{ccccc}
		\begin{tabular}{c}
			\includegraphics{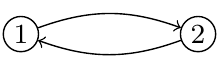}
		\end{tabular} & &
		\begin{tabular}{c}
			$L=\left[\begin{matrix}
			1 & -1 \\
			-1 & 1 \\
		\end{matrix}\right]$\\
		\end{tabular} & &
		\begin{tabular}{c}
			\includegraphics{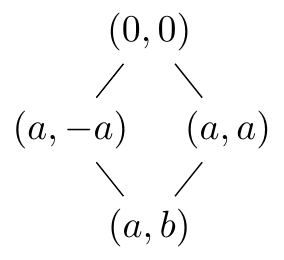}\\
		\end{tabular}\\
	\end{tabular}
\end{center}
Since $L$ has zero row sums, the diagonal subspace of $\R^2$ is $L$-invariant, 
and Proposition~\ref{dynamicallyInvariant} part (3) says that the subspace 
%of $(\R^3)^2$ 
with typical element 
$\big ( (u_1,v_1,w_1), (u_1,v_1,w_1) \big )$ 
is dynamically invariant for both System~(\ref{HzLorenz}) and (\ref{HyLorenz}) with 
the network coupling matrix $M = \kappa L$.
The \emph{synchronous solution} in this subspace is two copies 
of the uncoupled Lorenz attractor
since the coupling terms vanish.
This synchronized solution is observed to be stable if $\kappa = -2$ and unstable if $\kappa = 2$ 
for both System~(\ref{HzLorenz}) and (\ref{HyLorenz}).
The Master Stability Function \cite{PhysRevE_Pecora09} is an efficient
way to compute the stability of the synchronous solution for any network with Laplacian coupling.

Both of the solutions shown in Figure~\ref{LorenzFig} are \emph{anti-synchronous solutions} in the dynamically invariant subspace with typical element 
$\big (  (u_1,v_1,w_1), (-u_1,-v_1,w_1) \big )$.
In the left figure, the anti-synchronous solution is related to the synchronous solution by the symmetry $(x_1, x_2) \mapsto (x_1, N x_2)$ 
of System~(\ref{HzLorenz}), as described in
Proposition~\ref{NH=H}.  
Both the synchronous solution (not shown) and the anti-synchronous solution (shown in the left of Figure~\ref{LorenzFig}) are observed to be stable
for System~(\ref{HzLorenz}) with $M = -2L$.
This is exactly the anti-synchrony reported in \cite{KimAntiSynch}.
 
The anti-synchronous solution shown at the right of Figure~\ref{LorenzFig} is in the same dynamically invariant
subspace, but for a different reason.  The subspace is dynamically invariant for System~(\ref{HyLorenz}),
as predicted by 
Proposition~\ref{NH=-H} when $\DeltaP$ is the $L$-invariant anti-synchrony subspace 
of $\R^2$ with typical element $(a, -a)$.   
This anti-synchronous solution is apparently stable, 
and coexists with the unstable synchronous solution of System~(\ref{HyLorenz}) when $M = 2L$. 
 This type of anti-synchronous solution has not been reported before, to our knowledge.
\begin{figure}
\begin{center}
\begin{tabular}{c c c}
\includegraphics[width = 2.7in]{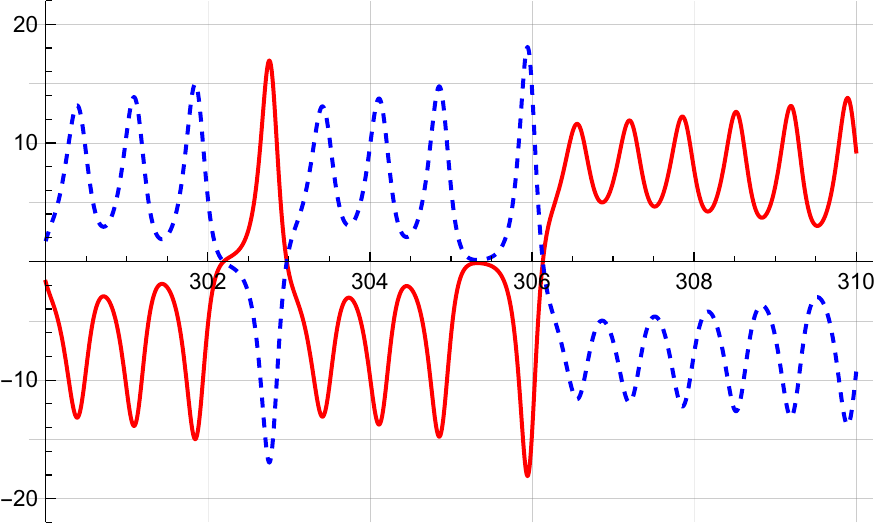} & &\includegraphics[width = 2.7in]{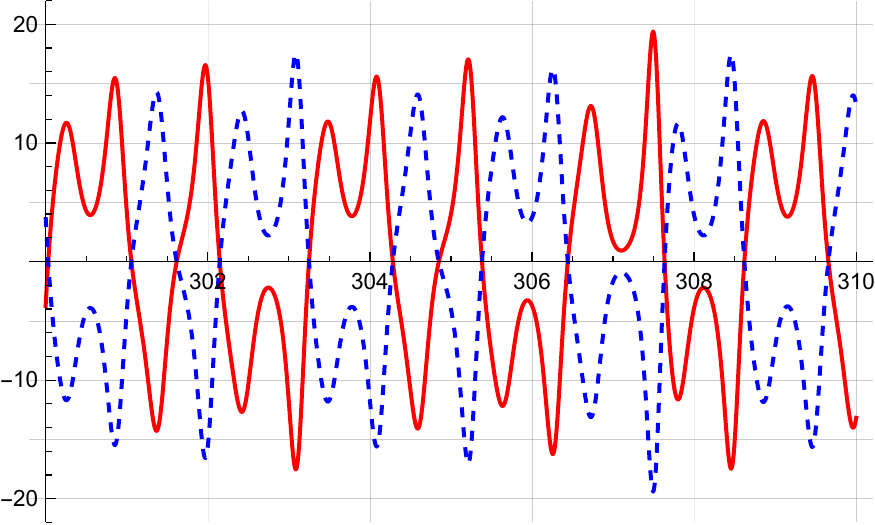} 
%\\
% $H = H_z, \ M = -2 L$  & & $H = H_y, \ M = 2 L$  \\
% $H = H_+, \ M = -2 L$  & & $H = H_-, \ M = 2 L$ \\
% $H = H_{3,3}, \ M = -2 L$  & & $H = H_{2,2}, \ M = 2 L$ 
\end{tabular}
\caption{\label{LorenzFig}
Numerical solutions with random initial conditions for System~(\ref{HzLorenz}) on the left with $M = -2L$,
and System~(\ref{HyLorenz}) on the right with $M = 2L$, where $L$ is defined in Example~(\ref{Lorenz2Hs}).
The red curves are graphs of $u_1$ vs.\ $t$,  and the dashed blue curves are graphs of $u_2$ vs.\ $t$. 
}
\end{center}
\end{figure}
\end{ex}

\section{A Lemma about $M$-Invariant Subspaces}\label{ALemmaaboutMInvariantSubspaces}

Let $W$ be a subspace of $\R^n$ and $M \in \R^{n\times n}$. We say that $W$ is \emph{$M$-invariant} if $M W \subseteq W$.
Note that we are using the left action of $M$. 

To generalize the results and techniques from the previous section to non-symmetric matrices, we will look at both $M$ and $M^T$. 
Note that $M$ and $M^T$ are similar,  see \cite{Kaplansky}, hence they have the same eigenvalues, %or KaplanskyN
with the same geometric and algebraic multiplicities.

The key Lemma in this section is the following.

\begin{lem}
\label{MainLemma}
Assume $\lam$ is a real eigenvalue of $M \in \R^{n\times n}$ with geometric multiplicity 1. 
Let $v_L$ and $v_R$ be eigenvectors of $M^T$ and $M$, 
respectively, with eigenvalue $\lam$.
If $W$ is an $M$-invariant subspace, then $v_R \in W$ or $v_L \in W^\perp$.
\end{lem}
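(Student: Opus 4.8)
The plan is to use the decomposition $\R^n = \ker(M - \lam I)^n \oplus \im(M - \lam I)^n$ into the generalized eigenspace for $\lam$ and its complement, both of which are $M$-invariant. The crucial point is that the hypothesis of geometric multiplicity $1$ forces the generalized eigenspace $U := \ker(M-\lam I)^n$ to be spanned by a single Jordan chain, so $v_R$ spans the unique honest eigenline in $U$, and every nonzero $M$-invariant subspace of $U$ contains $v_R$. Dually, applying the same reasoning to $M^T$ (which has the same eigenvalues with the same multiplicities), the generalized eigenspace of $M^T$ for $\lam$ is a single Jordan chain with honest eigenline spanned by $v_L$. I would then relate $v_L$ to the complement $\im(M-\lam I)^n$: a standard pairing argument shows that the generalized eigenspace of $M^T$ for $\lam$ is the annihilator of $\im(M-\lam I)^n$, and more precisely that $v_L^\perp \supseteq \im(M-\lam I)^n \oplus (\text{everything in } U \text{ except the } v_R \text{ direction})$, i.e. $v_L \cdot u = 0$ for every generalized eigenvector $u$ that is not a nonzero multiple of an eigenvector outside $\ker(M^T - \lam I)$... let me restate this more carefully below.

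Concretely, first I would establish the two facts: (i) if $W' \subseteq U$ is $M$-invariant and nonzero, then $v_R \in W'$, using that the restriction of $M$ to $U$ is a single Jordan block (up to the scalar $\lam$), whose only nonzero invariant subspaces are the standard flags, all of which contain the bottom of the chain $v_R$; and (ii) for any generalized eigenvector $w$ of $M$ for $\lam$, we have $v_L \cdot w \neq 0$ if and only if $w$ projects nontrivially onto the eigenline $\spn(v_R)$ along the Jordan chain — equivalently, $v_L \cdot w = 0$ exactly when $w \in \im(M - \lam I|_U)$, the span of all but the last chain vector. For (ii), the clean route is: $v_L \cdot (M - \lam I) z = ((M^T - \lam I) v_L) \cdot z = 0$ for all $z$, so $v_L \perp \im(M - \lam I)$; and $\im(M-\lam I)$ has codimension $1$ (geometric multiplicity $1$) while $v_L \neq 0$, so $\im(M - \lam I) = v_L^\perp$ exactly. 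That is the key identity: $v_L^\perp = \im(M - \lam I)$.

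Given $v_L^\perp = \im(M-\lam I)$, the lemma follows quickly. Let $W$ be $M$-invariant. If $W \subseteq \im(M - \lam I) = v_L^\perp$, then $v_L \in W^\perp$ and we are done. Otherwise $W \not\subseteq \im(M-\lam I)$, so there is $w \in W$ with $w \notin \im(M - \lam I)$; then $(M - \lam I)^n w \in W$ (by $M$-invariance, since $W$ is also $(M - \lam I)$-invariant) and I would argue $(M-\lam I)^n w$ spans... no — better: since $w \notin \im(M-\lam I) = v_L^\perp$, writing $w = u + r$ with $u \in U$, $r \in \im(M-\lam I)^n$, the component $u \in U$ must itself lie outside $\im(M-\lam I|_U)$ (because $\im(M-\lam I)^n \subseteq \im(M - \lam I)$ and $r$ contributes nothing to $v_L \cdot w$), so $u$ is a "top-of-chain" generalized eigenvector. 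Applying $(M-\lam I)|_U^{n-1}$ to $u$ (equivalently, a suitable polynomial projection $\pi: \R^n \to U$ followed by $(M-\lam I)^{n-1}$, all of which preserve $W$) produces a nonzero multiple of $v_R$ lying in $W$, hence $v_R \in W$.

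\textbf{Main obstacle.} The delicate step is making rigorous the passage "$w \in W$ with $v_L \cdot w \neq 0$ $\implies$ $v_R \in W$": one must produce the operator that extracts $v_R$ from $w$ while staying inside $W$. This requires the projection onto the generalized eigenspace $U$ (a polynomial in $M$, hence preserving $W$), then the nilpotent part $(M - \lam I)$ raised to the right power; and one must check the geometric-multiplicity-$1$ hypothesis genuinely forces $U$ to be a single Jordan block so that this power lands on a nonzero scalar multiple of $v_R$ rather than $0$. I expect roughly half the proof to be the clean linear-algebra identity $v_L^{\perp} = \im(M - \lam I)$ together with its Jordan-block consequence, and the rest to be bookkeeping.
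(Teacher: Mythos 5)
Your argument is essentially correct, but it takes a noticeably heavier route than the paper's. The paper never invokes generalized eigenspaces, Jordan chains, or spectral projections: it simply notes $v_L \perp \im(M-\lam I)$, observes that $W$ is $(M-\lam I)$-invariant, and applies rank--nullity to the restriction $(M-\lam I)|_W : W \to W$. Either this restriction has a nontrivial kernel, in which case a kernel vector is an eigenvector of $M$ for $\lam$ and hence a multiple of $v_R$ (geometric multiplicity one), so $v_R \in W$; or it is bijective, in which case $W = (M-\lam I)W \subseteq \im(M-\lam I) \subseteq v_L^{\perp}$, so $v_L \in W^{\perp}$. Your route buys the sharper identity $v_L^{\perp} = \im(M-\lam I)$ and an explicit recipe for extracting $v_R$ from a witness $w \in W \setminus v_L^{\perp}$ (project onto the generalized eigenspace by a polynomial in $M$, then apply the nilpotent part), while the paper's dichotomy buys brevity and avoids needing the spectral projector to be a polynomial in $M$. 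One correction to your sketch: the exponent you apply should not be $n-1$ but $m-1$, where $m$ is the algebraic multiplicity of $\lam$ (the size of the single Jordan block); if $m < n$ then $(M-\lam I)^{n-1}u = 0$. Equivalently, apply the largest power $j$ with $(M-\lam I)^{j}u \neq 0$; since $u \notin \im\bigl((M-\lam I)|_U\bigr)$, that power is exactly $m-1$ and the result is a nonzero multiple of $v_R$ lying in $W$. With that fix, and the standard fact that the projection onto $U$ along $\im(M-\lam I)^{n}$ is a real polynomial in $M$ (so it preserves $W$), your proof goes through.
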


\begin{proof}
If $u \in \R^n$, then 
$$
v_L \cdot (M-\lam I)u = (M-\lam I)^Tv_L \cdot u =  \0 \cdot u = 0.
$$
As $W$ is an $M$-invariant subspace, we also see that $(M-\lam I)W \subseteq W$. The 
restriction $(M-\lam I)|_W: W \rightarrow W$ either has a non-trivial kernel, 
or is bijective by the rank-nullity theorem.
In the former case,
we have $v_R \in W$ since $\lam$ is simple.
In the latter case, $v_L \in W^\perp$ 
since  $v_L\in \im(M-\lambda I)^\perp$ by the displayed equation above.
\end{proof}

\begin{remk}
The subscript on $v_L$ indicates that $v_L^T$ is a left eigenvector of $M$.  That is,
$v_L^T M = \lam v_L^T$.
\end{remk}

In the case that $M = M^T$, Lemma~\ref{MainLemma} says that every simple eigenvector is either contained in a given $M$-invariant
subspace, or orthogonal to that subspace.  If the eigenvalues of a symmetric matrix are all simple, the invariant subspaces are particularly easy to compute, as shown in the next example.

\begin{ex}
\label{LapDirichlet}
Consider the weighted digraph  with its adjacency matrix $A$ and lattice of $A$-invariant polydiagonal subspaces shown below:
\begin{center}
\begin{tabular}{ccccc}
\begin{tabular}{c}
\includegraphics{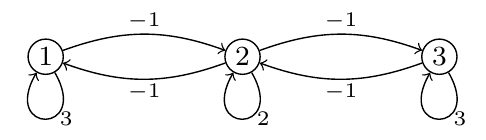}\\
\end{tabular} &  & %
\begin{tabular}{c}
$A=\left[\begin{matrix}3 & -1 & 0\\
-1 & 2 & -1\\
0 & -1 & 3
\end{matrix}\right]$\\
\end{tabular} &  & %
\begin{tabular}{c}
\includegraphics{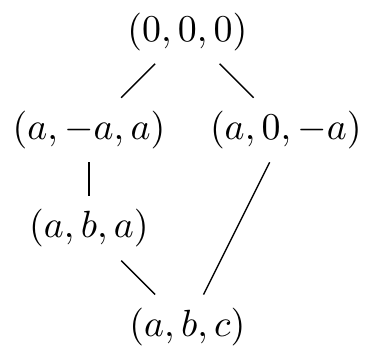}\\
\end{tabular}\\
\end{tabular}
\end{center}
The eigenvalues and eigenvectors of $A=A^T$ are
$$
\lam_1 = 1, v^{(1)} = (1,2,1) \qquad
\lam_2 = 3, v^{(2)} = (1,0,-1) \qquad
\lam_3 = 4, v^{(3)} = (1, -1, 1).
$$
Since all of the eigenvalues are simple, $W$ is an $A$-invariant subspace if and only if $W$ is spanned by a subset of the eigenvectors.
The following table shows that exactly 5 of the 8 $A$-invariant subspaces are polydiagonal subspaces,
and justifies the Hasse diagram of $A$-invariant polydiagonal subspaces shown above.
$$
\begin{array}{llll}
A\text{-invariant } W & \{i \mid v^{(i)} \in W\} & \{i \mid v^{(i)} \in W^\perp\} & \text{type of subspace}\\
\hline
(0,0,0) & \emptyset & \{1,2,3\} & \text{trivial}\\
(a, 2a,a) & \{1\} & \{2,3\} & \text{not polydiagonal}\\
(a,0,-a) & \{2\} & \{1, 3\} & \text{evenly tagged}\\
(a,-a,a) &\{3\} & \{1,2\} & \text{fully tagged}\\
(a,b,-a-2b) & \{2,3\} & \{1\} & \text{not polydiagonal}\\
(a, b, a) & \{1,3\} & \{2\} & \text{synchrony}\\
(a, b, -a+b) & \{1,2\} & \{3\} & \text{not polydiagonal}\\
(a,b,c) & \{1,2,3\} & \emptyset & \text{synchrony} \\
\hline
\end{array}
$$

For the symmetric matrix $A$, Lemma~\ref{MainLemma} states that $v^{(i)} \in W$ or $v^{(i)} \in W^\perp$ for each $i$, 
as this table illustrates.
Note that $(a,-a,a)$ is the typical element of an anti-synchrony subspace that is fully but not evenly tagged.

This example also illustrates that any matrix is the adjacency matrix of some weighted digraph.  
The complicated digraph was constructed to have the adjacency matrix $A$, 
which is of interest in the context of numerical differential equations.  
We can approximate a function $u: [0,1] \to \R$ by the values of $u$ at the midpoints of $n$ subintervals.  
For $n=3$, we approximate $u$ by the vector $\hat u = (u\left (\frac 1 6 \right), u\left ( \frac 1 2 \right ), u\left(\frac 5 6 \right )) $.  The map $u \mapsto -u''$  for functions with boundary conditions $u(0) = u(1) = 0$ is approximated by $\hat u \mapsto 9 A \hat u$.
The factor of $9$ is $1/h^2$, where $h = 1/3$ is the length of each subinterval \cite{NSS}.
\end{ex}

Note that $M$-invariant subspaces
that are not polydiagonal subspaces do not have any relevance for coupled cell networks.
In the remainder of the paper we will only consider the $M$-invariant polydiagonal subspaces.
Furthermore, it
is not always possible to list all of the $M$-invariant subspaces, as we did in Example~\ref{LapDirichlet}
because there are uncountably many if an eigenvalue of $M$ has geometric multiplicity greater than one.
Instead, we consider all 
of the polydiagonal subspaces in $\R^n$, and do a simple calculation to see which are $M$-invariant. 
This is feasible for $n \leq 8$.  The algorithm of \cite{Aguiar&Dias}, modified for weighted cell networks in \cite{AD2020} handles the multiple eigenvalues, and is feasible up to about $n=15$.

In the next example we consider a non-symmetric adjacency matrix.

\begin{ex}
\label{GandGT}
Consider the digraph of Network 3 in \cite[Figure 5]{leite} with its adjacency matrix $A$ and lattice of $A$-invariant polydiagonal subspaces shown below:
\begin{center}
	\begin{tabular}{ccccc}
		\begin{tabular}{c}
			\includegraphics{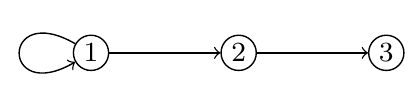}\\
		\end{tabular} &  & %
		\begin{tabular}{c}
			$A=\left[\begin{matrix}
			1 & 0 & 0\\
			1 & 0 & 0\\
			0 & 1 & 0
			\end{matrix}\right]$\\
		\end{tabular} &  & %
		\begin{tabular}{c}
			\includegraphics{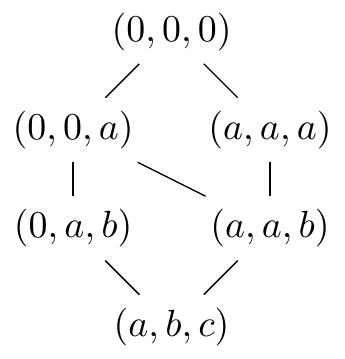}\\
		\end{tabular}\\
	\end{tabular}
\end{center}
The eigenvalues of $A$ are $\lam_0= 0$ with algebraic multiplicity 2 and geometric multiplicity 1, and $\lam_1 = 1$, with algebraic and geometric multiplicity 1.
The eigenvectors of $A$ and $A^T$ are
$$
\lambda_0=0, v_R^{(0)}=(0,0,1), v_L^{(0)} = (1,-1,0)  \qquad \lambda_1=1, v_R^{(1)} = (1,1,1), v_L^{(1)} = (1, 0, 0),
$$
respectively.  The compliance with Lemma~\ref{MainLemma} is shown in the table below:

%\begin{center}
%\begin{tabular}{lll}
%$A$-invariant $\Delta$ & eigenvectors of $A$ in $\Delta$ & 
%eigenvectors of $A^T$ in $\Delta^\perp$\\
%\hline
%$\{(0,0,0)\}$ & none & $v_L^{(0)}, v_L^{(1)}$\\
%$\{(0,0,a)\mid a \in\R\}$ &$v_R^{(0)}$ & $v_L^{(0)}, v_L^{(1)}$\\
%$\{(a,a,a)\mid a \in \R\}$ &$v_R^{(1)}$& $v_L^{(0)}$\\
%$\{(0,a,b)\mid a,b \in \R\}$ & $v_R^{(0)}$ &$ v_L^{(1)}$\\
%$\{(a,a,b)\mid a,b \in \R\}$ & $v_R^{(0)}$, $v_R^{(1)}$ & $v_L^{(0)}$\\
%$\{(a,b,c)\mid a, b, c \in \R\}$ & $v_R^{(0)}, v_R^{(1)}$ & none \\
%\hline
%\end{tabular}
%\end{center}

\begin{center}
\begin{tabular}{llll}
$A$-invariant $\Delta$ & $\{i \mid v_R^{(i)} \in \Delta \}$ & 
 $\{i \mid v_L^{(i)} \in \Delta^\perp \}$ & \text{type of subspace}\\
\hline
$(0,0,0)$ & $\emptyset$ & $\{0, 1\} $ & \text{trivial} \\
$(0,0,a)$ &$\{0\} $ & $\{0, 1\} $ & \text{minimally tagged}\\
$(a,a,a)$ &$\{1\}$& $\{0\} $ & \text{synchrony}\\
$(0,a,b)$ & $\{0\} $ & $\{1\} $ & \text{minimally tagged}\\
$(a,a,b)$ & $\{0, 1\} $ & $\{0\} $ & \text{synchrony}\\
$(a,b,c)$ & $\{0, 1\} $ & $\emptyset$ & \text{synchrony} \\
\hline
\end{tabular}
\end{center}
Lemma~\ref{MainLemma} says that $v_R \in \Delta$ or $v_L \in \Delta^\perp$.  
This example shows that both might be true, since the second and fifth rows of the table tell us that 
$v_R^{(0)}\in \Delta$ and $v_L^{(0)} \in \Delta^\perp$.
\end{ex}

\begin{remk}
\label{F-P}
If $A$ is an irreducible matrix with non-negative entries, then the Frobenius-Perron Theorem for irreducible non-negative matrices tells us that $A$ has a simple eigenvalue $\la$ and a corresponding eigenvector with only positive entries. See for instance Chapter 2, in particular Theorems 1.4 and 2.7, of \cite{BermanPlemmons}.  Note that the transpose of an irreducible matrix is again irreducible, from which we see that positive left- and right-eigenvectors $v_L$ and $v_R$ exist for $\la$. Lemma \ref{MainLemma} then tells us that any $A$-invariant subspace $W$ satisfies $v_R \in W$ or $v_L \in W^\perp$. However, a synchrony subspace cannot be orthogonal to a positive vector, whereas an anti-synchrony subspace cannot contain a positive vector. Hence, we conclude that any synchrony subspace of $A$ has to contain $v_R$, while any anti-synchrony subspace has to be orthogonal to $v_L$. Note that this holds in the special case where $A$ is the adjacency matrix of an unweighted, strongly connected network.   
\end{remk}

\begin{ex}
Consider the digraph of Network~15 in \cite[Figure 5]{leite} with its adjacency matrix $A$ and lattice of $A$-invariant polydiagonal subspaces shown below:
\begin{center}
	\begin{tabular}{ccccc}
		\begin{tabular}{c}
			\includegraphics{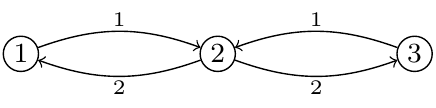}\\
		\end{tabular} &  & %
		\begin{tabular}{c}
			$A=\left[\begin{matrix}
			0 & 2 & 0\\
			1 & 0 & 1\\
			0 & 2 & 0
			\end{matrix}\right]$\\
		\end{tabular} &  & %
		\begin{tabular}{c}
			\includegraphics{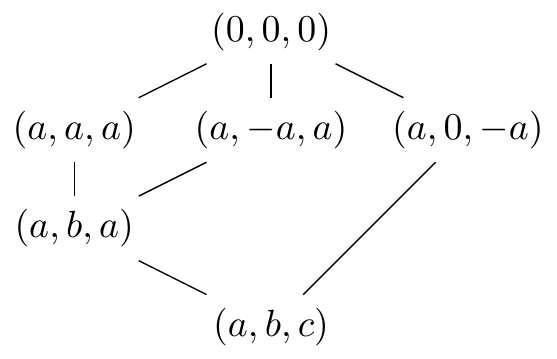}\\
		\end{tabular}\\
	\end{tabular}
\end{center}
The eigenvalues of $A$ are $\pm 2$ and 0.  
Remark~\ref{F-P} applies to this weighted, strongly connected digraph with non-negative weights.
The Frobenius-Perron eigenvalue is $\lam = 2$ and the associated eigenvectors are
$v_R = (1,1,1)$ and $v_L = (1,2,1)$. 
As predicted, each of the $A$-invariant synchrony subspaces contains $(1,1,1)$ 
and each of the $A$-invariant anti-synchrony subspaces is orthogonal to $(1,2,1)$.
\end{ex}

\section{Results for Matrices with Constant Column Sums}\label{Results_for_Matrices_with_Constant_Column_Sums}
 
The most general definition of a \emph{regular network} is a weighted digraph where the row sums of the weighted adjacency matrix $A$ are all the same, see \cite{AD2020}.  
This means that the sum of the weights of all the arrows coming into each vertex is the same for any vertex.
As a result, the synchrony subspace $\spn(\1)$ is $A$-invariant.

Our main applications of Lemma~\ref{MainLemma} concern what we might call \emph{output-regular networks}.  In this case the 
adjacency matrix has constant column sums.  This means that the sum of the weights of all the arrows with tails at each vertex is the same.  In an unweighted digraph, this just means that the number of arrows leaving each vertex is the same.

Note that if $M$ has constant column sums, then $\1$ is an eigenvector of $M^T$ and
the following is an immediate corollary of Lemma~\ref{MainLemma}.

\begin{cor}
\label{corOfMain}
Let $M\in \R^{n \times  n}$ have constant column sums $\lam$,
so $\1$ is an eigenvector of $M^T$ with eigenvalue $\lam$.
Assume the geometric multiplicity of $\lam$ is 1, 
and let $v$ be an eigenvector of $M$ with eigenvalue $\lam$.
Then every $M$-invariant subspace $W$ satisfies
$v \in W$ or $\1 \in W^\perp$. 
\end{cor}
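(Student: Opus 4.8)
The statement to prove is Corollary~\ref{corOfMain}, which I expect to be essentially an unwinding of Lemma~\ref{MainLemma} applied to the specific left eigenvector $\1$.

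The plan is to apply Lemma~\ref{MainLemma} directly, with the only real work being to verify its hypotheses in this setting. First I would recall that the column sums of $M$ being the constant $\lam$ is exactly the statement that each column of $M$ sums to $\lam$, i.e. $\sum_{i=1}^n M_{i,j} = \lam$ for every $j$; since the $j$-th entry of $M^T \1$ is $\sum_{i=1}^n (M^T)_{j,i} = \sum_{i=1}^n M_{i,j} = \lam$, this says precisely $M^T \1 = \lam \1$, so $\1$ is an eigenvector of $M^T$ with eigenvalue $\lam$. Next, the hypothesis ``geometric multiplicity of $\lam$ is $1$'' needs to be read correctly: $M$ and $M^T$ have the same eigenvalues with the same geometric and algebraic multiplicities (as noted in the text, citing \cite{Kaplansky}), so ``$\lam$ has geometric multiplicity $1$'' is unambiguous — it holds for $M$ if and only if it holds for $M^T$. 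In particular the eigenspace of $M^T$ for $\lam$ is spanned by $\1$, and the eigenspace of $M$ for $\lam$ is spanned by the given vector $v$.

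With the hypotheses in hand, I would simply set $v_L := \1$ and $v_R := v$ in Lemma~\ref{MainLemma}: $\lam$ is a real eigenvalue of $M$ of geometric multiplicity $1$, $v_L = \1$ is an eigenvector of $M^T$ with eigenvalue $\lam$, and $v_R = v$ is an eigenvector of $M$ with eigenvalue $\lam$. The lemma then yields, for any $M$-invariant subspace $W$, that $v_R \in W$ or $v_L \in W^\perp$, i.e. $v \in W$ or $\1 \in W^\perp$, which is exactly the conclusion.

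Honestly, there is no substantial obstacle here — the corollary is a direct specialization, and the proof is a two-line invocation of the lemma once the translation ``constant column sums $\iff M^T \1 = \lam\1$'' is spelled out. The only point worth a sentence of care is making sure the reader sees that $v$ is (up to scalar) the unique eigenvector of $M$ for $\lam$, so that the phrasing ``let $v$ be an eigenvector'' is well-posed; this is immediate from the geometric multiplicity being $1$. So the write-up would be: verify $M^T\1 = \lam\1$, note geometric multiplicity $1$ transfers between $M$ and $M^T$, apply Lemma~\ref{MainLemma} with $v_L = \1$, $v_R = v$, and conclude.
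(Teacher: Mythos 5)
Your proof is correct and matches the paper exactly: the paper presents the corollary as an immediate consequence of Lemma~\ref{MainLemma}, noting only that constant column sums give $M^T\1 = \lam\1$, and then invoking the lemma with $v_L = \1$ and $v_R = v$. Your added remark that the geometric multiplicity of $\lam$ transfers between $M$ and $M^T$ is the same observation the paper makes just before the lemma (citing Kaplansky), so nothing is missing.
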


\begin{ex}
\label{exOfCorOfMain}
Consider the weighted digraph with its adjacency matrix $A$ and lattice of $A$-invariant polydiagonal subspaces shown below:
\begin{center}
	\begin{tabular}{ccccc}
		\begin{tabular}{c}
			\includegraphics{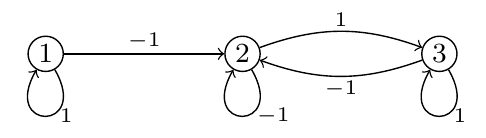}
		\end{tabular} &  & %
		\begin{tabular}{c}
			$A=\left[\begin{matrix}
			1 & 0 & 0\\
			-1 & -1 & -1\\
			0 & 1 & 1
			\end{matrix}\right]$\\
		\end{tabular} &  & %
		\begin{tabular}{c}
			\includegraphics{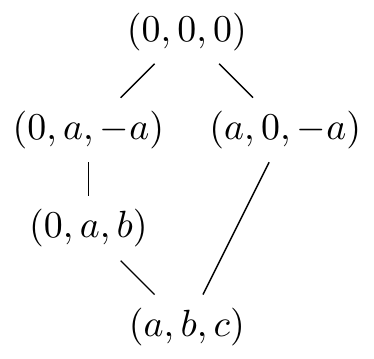}\\
		\end{tabular}\\
	\end{tabular}
	\par
\end{center}
The column sums of $A$ are all $0$, and the eigenvalue $\lam = 0$ has 
geometric multiplicity 1 with eigenvector $v = (0, 1, -1)$.   
As stated in Corollary~\ref{corOfMain}, every $A$-invariant polydiagonal subspace 
$\Delta$ has $v \in \Delta$ or $1 \in \Delta^\perp$.
Note that $v \perp \1$, so both are true for $\Delta = \spn(v) = \{(0, a, -a) \mid a \in \R\}$.
\end{ex}
Our main application of Corollary~\ref{corOfMain} is to polydiagonal subspaces.
We get a stronger conclusion if we assume $W$ is a polydiagonal subspace and make an additional assumption on the eigenvector $v$.

\begin{thr}
\label{main}
Let $M\in \R^{n \times  n}$ have constant column sums $\lam$, so that $\lam$ is an eigenvalue of $M$. Assume the geometric multiplicity of $\lam$ is 1, and a corresponding eigenvector $v$ satisfies $v_i +v_j \neq 0$ for all $i$ and $j$, including $i = j$.
Then an $M$-invariant polydiagonal subspace $\Delta_\calP$ is either a synchrony subspace that contains $v$ or an evenly tagged anti-synchrony subspace that does not contain $v$.
\end{thr}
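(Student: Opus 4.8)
The plan is to derive the theorem almost immediately from Corollary~\ref{corOfMain} and Proposition~\ref{orthodiagonal}, using the hypothesis $v_i + v_j \neq 0$ only as a tool to discard one of the two alternatives produced in each case. Since $M$ has constant column sums $\lam$, the all-ones vector $\1$ is an eigenvector of $M^T$ with eigenvalue $\lam$; combined with the assumption that the geometric multiplicity of $\lam$ is $1$, Corollary~\ref{corOfMain} applies to the $M$-invariant polydiagonal subspace $\Delta_\calP$ and yields the dichotomy $v \in \Delta_\calP$ or $\1 \in \Delta_\calP^\perp$ (not mutually exclusive in general). I would treat these two cases separately.

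In the case $v \in \Delta_\calP$, I would show that $\Delta_\calP$ is forced to be a synchrony subspace. If it were not, the partial involution of $\calP$ would be nonempty, hence would possess either a fixed point $P_0$ or a genuine two-element orbit $\{P, P^*\}$ with $P \neq P^*$. In the first situation every $x \in \Delta_\calP$ satisfies $x_i = 0$ for $i \in P_0$, so $v_i = 0$, contradicting the diagonal instance $v_i + v_i \neq 0$. In the second situation, picking $i \in P$ and $j \in P^*$ forces $v_i = -v_j$, i.e.\ $v_i + v_j = 0$, contradicting the off-diagonal hypothesis. Thus $\Delta_\calP$ is a synchrony subspace, and it contains $v$ by assumption; this is the first alternative of the theorem.

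In the case $v \notin \Delta_\calP$, Corollary~\ref{corOfMain} forces $\1 \in \Delta_\calP^\perp$, and Proposition~\ref{orthodiagonal} then identifies $\Delta_\calP$ as an evenly tagged anti-synchrony subspace; it does not contain $v$ by the case hypothesis. This is the second alternative, and since every $M$-invariant polydiagonal subspace falls into one of the two cases, the proof would be complete.

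I do not expect a real obstacle here: the substantive content is already packaged in the earlier corollary and proposition, and what remains is bookkeeping. The one point that deserves care is recognizing that the assumption ``$v_i + v_j \neq 0$ for all $i$ and $j$, including $i = j$'' is doing two separate jobs — the instances with $i = j$ rule out a fixed class of the tagging (the $x_k = 0$ relations), while the instances with $i \neq j$ rule out the $x_i = -x_j$ relations — so both families of instances must be invoked. It is also worth remarking that because the $\lam$-eigenspace is one-dimensional this condition is independent of the scaling of $v$, so it is genuinely a hypothesis on $M$ alone.
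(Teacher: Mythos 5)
Your proposal is correct and follows essentially the same route as the paper: the paper applies Lemma~\ref{MainLemma} (of which Corollary~\ref{corOfMain} is the constant-column-sum specialization you cite) to get the dichotomy $v \in \Delta_\calP$ or $\1 \in \Delta_\calP^\perp$, rules out any tagging in the first case via the condition $v_i + v_j \neq 0$, and invokes Proposition~\ref{orthodiagonal} in the second. Your write-up merely spells out the fixed-point versus two-class cases of the involution that the paper's proof states in one line, so there is no substantive difference.
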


\begin{proof}
Note that $\1$ is an eigenvector of $M^T$ with eigenvalue $\lambda$.

If $v \in \Delta_\mathcal{P}$, then the assumptions on $v$ imply that $[i]^*\ne [j]$ for all $i$ and $j$. Hence the partial involution of $\mathcal{P}$ is the empty function 
and $\Delta_\mathcal{P}$ is a synchrony subspace. 

If $v \not \in \Delta_\calP$, then Lemma~\ref{MainLemma} implies that $\1 \in \Delta_\mathcal{P}^\perp$.  In this case,
Proposition~\ref{orthodiagonal} implies that 
$\Delta_\mathcal{P}$ is an evenly tagged anti-synchrony subspace.
\end{proof}

\begin{remk}
\label{OddRemark}
One interesting consequence of Theorem \ref{main} pertains to (weighted) networks with an odd number of cells. To elaborate, if the matrix $M\in \R^{n \times  n}$ satisfies the conditions of Theorem \ref{main} and if $n$ is odd, then any polydiagonal subspace is either a synchrony subspace, or has at least one cell with vanishing internal state. This follows from the fact that an evenly tagged anti-synchrony subspace of an odd-dimensional subspace has to correspond to an involution with a fixed point. In particular, suppose we have a possibly non-linear dynamical system on a network with $n$ cells for which invariant subspaces are precisely the polydiagonal subspaces of the matrix $M$. This could for instance be the case if $M$ is the adjacency matrix or Laplacian of the network, see \cite{AD2020}. If we have reason to assume the state of our system is in a polydiagonal subspace, though not in a synchrony subspace (for instance by finding two cells with equal but opposing states), then we know for a fact that there has to be a cell with vanishing dynamics in the system. Note that we may not have any idea where this cell is situated in the network, nor does it have to be close to the two cells we have observed.
\end{remk}

\begin{remk}
\label{lowestDimSynch}
If $M$ and $v$ satisfy the hypotheses of Theorem~\ref{main},
then every $M$-invariant synchrony subspace
contains $v$.
The lowest dimensional $M$-invariant synchrony subspace 
is the polydiagonal subspace for
the coarsest $M$-invariant partition, as described in \cite{NSS7}.
At one extreme,
if $v = \1$, then $\spn ( \1 )$ is a one-dimensional $M$-invariant synchrony subspace. 
At the other extreme, if all $n$ components of $v$ are distinct, 
then the only $M$-invariant synchrony subspace is $\R^n$.
Examples with each of these extremes follow.
\end{remk}

\begin{ex}
\label{directedC3}
Consider the digraph with its adjacency matrix $A$ and lattice of $A$-invariant polydiagonal subspaces shown below:
\begin{center}
	\begin{tabular}{ccccc}
		\begin{tabular}{c}
			\includegraphics{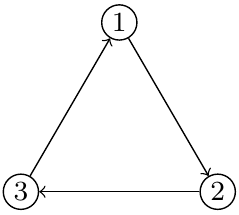}
		\end{tabular} &  & %
		\begin{tabular}{c}
			$A=\left[\begin{matrix}
			0 & 0 & 1\\
			1 & 0 & 0\\
			0 & 1 & 0
			\end{matrix}\right]$\\
		\end{tabular} &  & %
		\begin{tabular}{c}
			\includegraphics{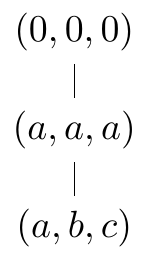}\\
		\end{tabular}\\
	\end{tabular}
	\par
\end{center}
The column sums of $A$ are all 1, and $v =\1$ is an eigenvector of $A$ with the simple eigenvalue 1.
Thus, Theorem~\ref{main} applies. The only evenly tagged anti-synchrony subspace is $\{\0\}$, and the other two $A$-invariant polydiagonal subspaces are synchrony subspaces that contain $v$. 
\end{ex}

\begin{ex}
\label{directedC4}
Consider the weighted digraph with its adjacency matrix $A$ and lattice of $A$-invariant polydiagonal subspaces shown below:
\begin{center}
	\begin{tabular}{ccccc}
		\begin{tabular}{c}
			\includegraphics{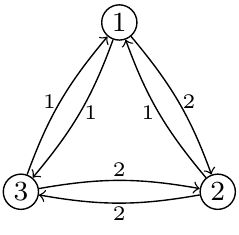}
		\end{tabular} &  & %
		\begin{tabular}{c}
			$A=\left[\begin{matrix}
			0 & 1 & 1\\
			2 & 0 & 2\\
			1 & 2 & 0
			\end{matrix}\right]$\\
		\end{tabular} &  & %
		\begin{tabular}{c}
			\includegraphics{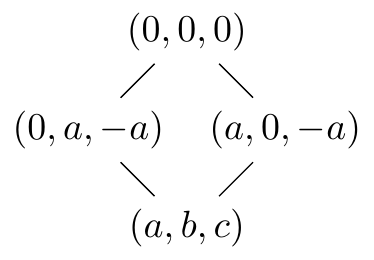}\\
		\end{tabular}\\
	\end{tabular}
	\par
\end{center}
The column sums of $A$ are all $3$, and an eigenvector of $A$ with the simple eigenvalue $\lam = 3$ is $v = (5,8,7)$.
Theorem~\ref{main} applies, and as
discussed in Remark~\ref{lowestDimSynch} the only $A$-invariant synchrony subspace is
$\R^3$ since all of the components of $v$ are distinct.
The other three $A$-invariant polydiagonal subspaces are evenly tagged anti-synchrony subspaces.
\end{ex}

Recall that a weighted digraph is called weight-balanced if for each vertex the indegree equals the outdegree. 

%
%\begin{defi}\label{inoutdef}
%A weighted digraph with adjacency matrix $A$ is called  \emph{Eulerian} if the $i$-th row sum of $A$ equals the $i$-th column sum of $A$ for each $i$.
%\end{defi}
%
%\begin{remk}
%The adjacency matrix of a network can be interpreted as the weight matrix of a weighted network, consisting of only zeroes and ones. In that case the weight matrix is Eulerian precisely when the directed graph is, which is why we use the same term in Definition \ref{inoutdef}. 
%\end{remk}

\begin{thr}
\label{input-output}
Let $L$ be the Laplacian matrix of a weight-balanced digraph.  
Assume that the eigenvalue $0$ of $L$ has geometric multiplicity one.
Then every  $L$-invariant anti-synchrony subspace is evenly tagged. 
\end{thr}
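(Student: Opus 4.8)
The plan is to deduce Theorem~\ref{input-output} as the special case $M = L$, $\lambda = 0$, $v = \1$ of Theorem~\ref{main}. The only thing to check is that the hypotheses of Theorem~\ref{main} are met for the Laplacian of a weight-balanced digraph. First I would record the standard fact that every row sum of $L$ vanishes: by \eqref{Laplacian}, $\sum_j L_{i,j} = \sum_k A_{i,k} - \sum_j A_{i,j} = d^-(i) - d^-(i) = 0$, so $0$ is an eigenvalue of $L$ with right eigenvector $\1$. Next I would compute the column sums: $\sum_i L_{i,j} = \sum_k A_{j,k} - \sum_i A_{i,j} = d^-(j) - d^+(j) = -b(j)$, the negative of the imbalance of vertex $j$. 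Since the digraph is weight-balanced we have $b(j) = 0$ for all $j$, so $L$ has constant column sums, all equal to $\lambda = 0$.

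With this in place, the hypothesis that the eigenvalue $0$ has geometric multiplicity one is precisely the remaining spectral hypothesis of Theorem~\ref{main}, and the eigenvector $v = \1$ trivially satisfies $v_i + v_j = 2 \neq 0$ for all $i,j$, including $i=j$. Applying Theorem~\ref{main} with $M = L$ therefore shows that every $L$-invariant polydiagonal subspace is either a synchrony subspace (necessarily containing $\1$) or an evenly tagged anti-synchrony subspace. An $L$-invariant anti-synchrony subspace is, by Definition~\ref{TypesOfPolydiagonals}, not a synchrony subspace, so it must fall in the second case; that is, it is evenly tagged, which is the assertion of the theorem.

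I do not expect any genuine obstacle here: the argument is essentially bookkeeping plus a single invocation of Theorem~\ref{main}. The one point to state with care is the translation ``weight-balanced $\iff$ every column sum of $L$ vanishes,'' which is immediate from the definitions of imbalance and of the Laplacian in Section~\ref{Preliminaries} (a weighted digraph is weight-balanced exactly when the $i$-th row and column sums of $A$ agree for all $i$). One could alternatively run the argument from scratch via Lemma~\ref{MainLemma} applied with $v_L = v_R = \1$ together with Proposition~\ref{orthodiagonal}, but routing through Theorem~\ref{main} is the cleanest presentation.
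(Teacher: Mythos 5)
Your proposal is correct and follows essentially the same route as the paper: both verify that weight-balancedness forces all column sums of $L$ to vanish (the paper via ``row sums are $0$ and $L$ is weight-balanced,'' you via the direct computation $\sum_i L_{i,j} = -b(j) = 0$) and then invoke Theorem~\ref{main} with $\lambda = 0$ and $v = \1$. The only differences are cosmetic bookkeeping in how the column-sum condition is checked.
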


\begin{proof}
The Laplacian matrix $L$ is weight-balanced as the difference of the weight-balanced matrix $A$ and the weight-balanced diagonal degree matrix.
%The row sums of $L$ are always 0, and since $L$ is Eulerian the column sums are all 0 too.
Since the row sums of any Laplacian matrix are 0, we conclude that the column sums of $L$ are all 0 too.
%Since the $i$th row sum equals the $i$th column sum of the adjacency matrix $A$, 
%the Laplacian matrix 
%$L$ has all of the row sums and all of the column sums equal to $0$.  
%Thus $\1$ is an eigenvector of both $L$ \newold{}{and $L^T$}, with eigenvalue $0$.  
Hence, Theorem~\ref{main} applies with $\lambda = 0$ and $v = \1$.  
The 
result follows, since every $L$-invariant anti-synchrony subspace is evenly tagged.
\end{proof}

\begin{remk}\label{F_P2}
If $A$ is an irreducible matrix with non-negative off-diagonal entries,
then the corresponding Laplacian $L$ has a simple eigenvalue $0$. This result is known, but less so in our general setting.  To see that it still holds, note that for some large enough positive scalar $\mu \in \R$ the matrix  $\mu\Id-L$ has positive diagonal entries. Moreover, $\mu\Id-L$ agrees with $A$ on the off-diagonal entries, so that $\mu\Id-L$ equals the non-negative adjacency matrix of a strongly connected weighted digraph. As in Remark~\ref{F-P}, it follows from the Frobenius-Perron theorem that $\mu\Id-L$ has a simple eigenvalue $\la$ with a positive eigenvector $v$. Moreover, any non-negative eigenvector is necessarily a multiple of $v$, see \cite[Theorem 1.4]{BermanPlemmons}.  As $\mu\Id-L$ has an eigenvalue $\mu$ with eigenvector $\1$, we conclude that $\1$ is a multiple of $v$ and that $\mu = \la$. In particular, the eigenvalue $\mu$ of  $\mu\Id-L$ is simple, so that the eigenvalue $0$ of $L$ is as well.
\end{remk}

The following proposition, combined with Remark \ref{F_P2} above, can be very useful for showing that the Laplacian of a weight-balanced digraph satisfies the conditions of Theorem \ref{input-output}. The proof is only a small adaptation of the one commonly used for Eulerian digraphs, so we will not include it here. See for example \cite[Theorem 23.1]{WilsonBook} 
% https://www.maths.ed.ac.uk/~v1ranick/papers/wilsongraph.pdf
or \cite[Theorem 1.4.24]{WestBook}.

%\note{N: It is well known that a weakly connected Eulerian digraph has a directed Eulerian trail. Pretty much every serious graph theory book has this as a theorem or exercise. See for example Theorem 23.1 of Graph Theory by Robin J. Wilson (4th ed) or Theorem 1.4.24 of Introduction to Graph Theory by Douglas West (2nd ed). }

\begin{prop}
If a weight-balanced digraph is weakly connected with positive weights for all arrows that are not self-loops, then it is strongly connected.
\end{prop}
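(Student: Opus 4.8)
The plan is to mimic the classical argument that a weakly connected Eulerian digraph is strongly connected, but tracking arrow weights instead of arrow counts. Suppose, for contradiction, that the weight-balanced digraph $(G,w)$ is weakly connected with all non-loop weights positive, yet is not strongly connected. Then the condensation of $G$ into its strongly connected components is a directed acyclic graph with at least two vertices, and every finite DAG has a source; so there is a strongly connected component $S$ with $\emptyset \neq S \subsetneq V_G$ such that no arrow of $G$ has its tail in $V_G \setminus S$ and its head in $S$.

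Next I would compare $\sum_{i \in S} d^-(i)$ with $\sum_{i \in S} d^+(i)$; these are equal because $G$ is weight-balanced. Writing each as a sum over arrows, $\sum_{i \in S} d^-(i) = \sum_{(j,i)\in E_G,\ i \in S} w(j,i)$, and by the source property every such $j$ lies in $S$, so this runs exactly over the arrows with both endpoints in $S$ (self-loops in $S$ included). On the other hand $\sum_{i \in S} d^+(i) = \sum_{(i,j)\in E_G,\ i \in S} w(i,j)$ splits as that same "inside $S$" sum plus $\sum_{(i,j)\in E_G,\ i \in S,\ j \notin S} w(i,j)$. Self-loops cause no trouble: a self-loop at a vertex of $S$ contributes its weight to both $d^-$ and $d^+$ of that vertex and hence cancels in the comparison. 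Cancelling the common "inside $S$" sums leaves $\sum_{(i,j)\in E_G,\ i \in S,\ j \notin S} w(i,j) = 0$.

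Finally, every arrow counted in that last sum joins two distinct vertices (one in $S$, one not), so it is not a self-loop and therefore has positive weight; a sum of positive reals equal to $0$ must be empty. Hence no arrow leaves $S$ either, so — combined with the source property — no arrow of $G$ joins $S$ to $V_G \setminus S$ in either direction, and in particular no edge of the underlying graph does. Since $S$ is nonempty and proper, this contradicts weak connectedness of $G$, so $G$ must be strongly connected.

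The argument is essentially a direct weight count; the only points that need care, rather than a genuine obstacle, are the self-loop bookkeeping and the observation that a source of the condensation is necessarily a proper nonempty subset of $V_G$. An alternative packaging that avoids the condensation is to let $S$ be the set of vertices reachable from a fixed vertex, run the same weight comparison to conclude no arrow leaves $S$, and then invoke weak connectedness to force $S = V_G$; I expect the condensation phrasing to be the cleanest to write up.
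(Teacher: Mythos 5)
Your proof is correct, and it is precisely the argument the paper has in mind: the paper omits the proof, describing it as ``a small adaptation of the one commonly used for Eulerian digraphs'' (citing Wilson and West), and your weight-counting comparison of $\sum_{i\in S} d^-(i)$ and $\sum_{i\in S} d^+(i)$ over a source component $S$ of the condensation, with the self-loop contributions cancelling, is exactly that adaptation with weights replacing arrow counts. No gaps; the alternative packaging via the reachability set that you mention at the end is the other standard phrasing of the same argument.
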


\begin{ex}
Consider the weighted digraph with its Laplacian matrix $L$ and lattice of $L$-invariant polydiagonal subspaces shown below:
%An example of an Eulerian digraph with a non-symmetric adjacency matrix  is shown in the figure below:  
\begin{center}
	\begin{tabular}{ccccc}
		\begin{tabular}{c}
			\includegraphics{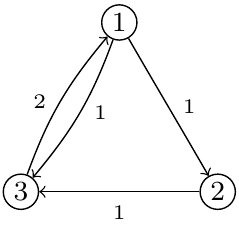}
		\end{tabular} &  & %
		\begin{tabular}{c}
			$L=\left[\begin{matrix}
			2 & 0 & -2\\
			-1 & 1 & 0\\
			-1 & -1 & 2
			\end{matrix}\right]$\\
		\end{tabular} &  & %
		\begin{tabular}{c}
			\includegraphics{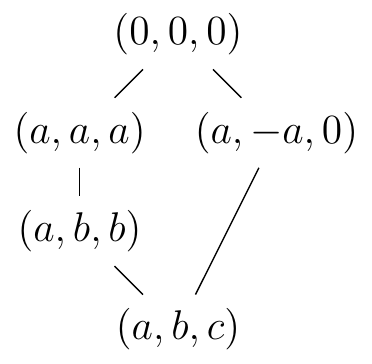}\\
		\end{tabular}\\
	\end{tabular}
	\par
\end{center}
The adjacency matrix of the weighted digraph is
$$
A = \left[\begin{smallmatrix}
0 & 0 & 2\\
1 & 0 & 0\\
1 & 1 & 0
\end{smallmatrix}\right].
$$
The second row sum and second column sum of $A$ are both 1.  All other row sums and column sums are 2.
Thus the weighted digraph is weight-balanced. Note that the $L$-invariant anti-synchrony
subspaces are all evenly tagged, as described by Theorem~\ref{input-output}. The $A$-invariant
polydiagonal subspaces are only $\{(0,0,0)\}$ and $\R^3$.  These are invariant for
every $3 \times 3$ matrix. 
\end{ex}

\begin{ex}Consider the weight-balanced digraph of Example~\ref{LapDirichlet}.  The $A$-invariant subspace with typical element $(a, -a, a)$ is not
evenly tagged, but this is not an $L$-invariant subspace.  The lattice of $L$-invariant subspaces is similar to the lattice of $A$-invariant subspaces, but
with $(a, -a,a) $ replaced by $(a,a,a)$. 
The $L$-invariant anti-synchrony subspaces $\{(0,0,0)\}$ and $\spn\{(1, 0, -1)\}$ are both evenly tagged.
\end{ex}

A weighted digraph is \emph{vertex transitive} if each vertex can be mapped to any other vertex by an automorphism of the weighted digraph.

The following result provides more examples of weight-balanced digraphs. 

\begin{lem}
\label{symimplieseul}
A vertex transitive  weighted digraph is weight-balanced.
\end{lem}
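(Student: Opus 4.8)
The plan is to show that the automorphism group acting transitively on vertices forces the imbalance function $b$ to be constant, and then observe that the total imbalance over all vertices is zero, so the constant must be zero. First I would recall that an automorphism $\phi$ of a weighted digraph $(G,w)$ preserves the weight function, in the sense that $w(\phi(i),\phi(j)) = w(i,j)$ whenever $(i,j) \in E_G$, and that $\phi$ induces a bijection $N^+(i) \to N^+(\phi(i))$ (and likewise for in-neighborhoods). From this it follows immediately that $d^+(\phi(i)) = \sum_{j \in N^+(i)} w(\phi(i),\phi(j)) = \sum_{j \in N^+(i)} w(i,j) = d^+(i)$, and similarly $d^-(\phi(i)) = d^-(i)$. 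Hence $b(\phi(i)) = b(i)$ for every automorphism $\phi$ and every vertex $i$.

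Next I would invoke vertex transitivity: given any two vertices $i$ and $i'$, there is an automorphism $\phi$ with $\phi(i) = i'$, so $b(i') = b(i)$. Therefore $b$ is constant on $V_G$; call this common value $c$. The final step is a counting identity: the sum of all out-degrees equals the sum of all edge weights (each arrow $(i,j)$ contributes $w(i,j)$ to $d^+(i)$ exactly once), and the sum of all in-degrees likewise equals the sum of all edge weights (each arrow $(i,j)$ contributes $w(i,j)$ to $d^-(j)$ exactly once). Equivalently, in terms of the adjacency matrix $A$, the sum of all row sums equals the sum of all column sums, both being $\sum_{i,j} A_{i,j}$. Consequently
\[
n \cdot c = \sum_{i \in V_G} b(i) = \sum_{i \in V_G} d^+(i) - \sum_{i \in V_G} d^-(i) = 0,
\]
so $c = 0$, which says exactly that every vertex has imbalance $0$, i.e.\ the weighted digraph is weight-balanced.

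I do not anticipate a serious obstacle here; the only point requiring a little care is the bookkeeping that an automorphism genuinely restricts to weight-preserving bijections between the respective in- and out-neighborhoods, which is a direct unwinding of the definition of automorphism of a weighted digraph given in Section~\ref{Preliminaries}. Everything else is the elementary observation that $b$ is an automorphism-invariant function together with the fact that total imbalance vanishes.
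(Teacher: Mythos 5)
Your proposal is correct and follows essentially the same route as the paper's proof: vertex transitivity makes the imbalance constant, and the total imbalance vanishes because each arrow's weight contributes once positively and once negatively, forcing the constant to be zero. You simply spell out in more detail why the imbalance is an automorphism-invariant quantity, which the paper leaves implicit.
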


\begin{proof}
The transitivity ensures that the imbalance of every vertex is the same. The sum 
$$
\sum_{i \in V} b(i) = 0
$$ 
of the imbalances of the vertices is zero since every weight is counted twice with opposite sign. Thus the imbalance of every vertex must be 0.
\end{proof}

\begin{remk}
\label{vertexTransitive}
If the  indegree is the same for each vertex of a weighted digraph, then the adjacency matrix and the Laplacian add up to a multiple of the identity. 
Hence, both matrices have the same invariant subspaces. This is, for instance, the case for vertex transitive weighted digraphs. 
\end{remk}

A vertex transitive \emph{weighted Cayley digraph} can be constructed from a Cayley color digraph of a group by replacing the arrow-labels with weights. 
The automorphism group of the resulting weighted digraph is isomorphic to the original group if 
the arrow relabeling function is injective. 
Otherwise the relabeling can introduce additional automorphisms. 
We give two examples.

\begin{ex}
\label{Z7Cayley}
Consider the group $\langle \sigma \mid \sigma^7 = 1\rangle \cong \Z_7$.  
The weighted Cayley digraph using the generating set $\{\sigma, \sigma^{-1} \}$ and the 
arrow relabeling 
$\sigma \mapsto s$, $\sigma^{-1} \mapsto t$ is shown on the left.
The lattice of $A$-invariant polydiagonal subspaces when the weights satisfy $s \neq t$ is shown in the middle.
In this case, the automorphism group of the weighted Cayley digraph is isomorphic to
the original group $\Z_7$. 

If $s = t$, the automorphism group of the weighted Cayley digraph is isomorphic to $D_7$ due to the additional reflection symmetries.
The lattice for this case is shown on the right.
The boxes indicate group orbits of polydiagonal subspaces.  
One representative is shown, and the size of the group orbit (in this case, 7) is shown. 

The result of Theorem~\ref{input-output} holds for equal or unequal weights.  Since the number of vertices is odd and every
anti-synchrony subspace is evenly tagged,
the presence of any anti-synchronous pair implies at least one cell is in the 0 state,
as seen in the subspace $(a, b, c, 0, -c, -b, -a)$.
See Remark~\ref{OddRemark}.
\begin{center}
	\begin{tabular}{ccc}
		\begin{tabular}{c}
			\includegraphics{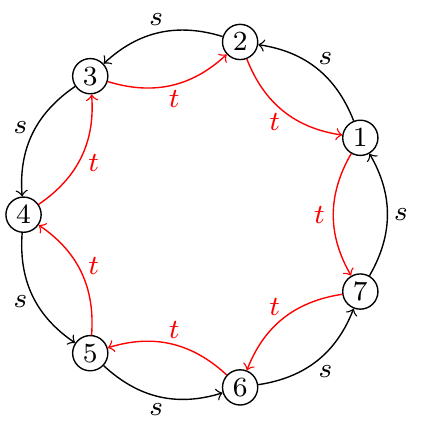}
		\end{tabular} &  
		\begin{tabular}{c}
		    \includegraphics{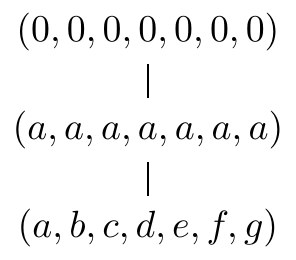} 
		\end{tabular} & 
		\begin{tabular}{c}
			\includegraphics{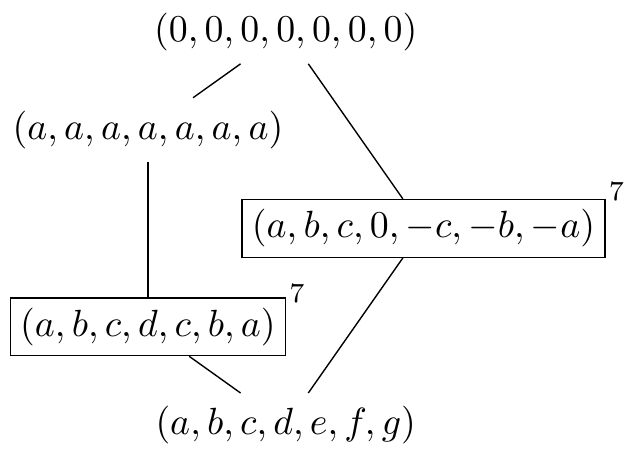}
		\end{tabular}
		\\ \\
		& $s \neq t$ & $s = t$ 
	\end{tabular}
\end{center}

%\begin{center}
%	\begin{tabular}{ccc}
%		\begin{tabular}{c}
%			\includegraphics{}
%		\end{tabular} &  
%		\begin{tabular}{c}
%		    \includegraphics{} 
%		\end{tabular} & 
%		\begin{tabular}{c}
%			\includegraphics{D5-lattice.pdf}
%		\end{tabular}
%	\end{tabular}
%\end{center}
\end{ex}

\begin{ex}
\label{D3Cayley}
Consider the $2k$-element group 
$ \langle \sigma, \tau \mid \sigma^2 =\tau^2 = (\sigma \tau)^k = 1\rangle \cong D_k$.
This is the symmetry group of a regular $k$-gon if $k > 2$.
The weighted Cayley digraph for $D_3$ using the generating set $\{\sigma, \tau \}$ and the 
arrow relabeling 
$\sigma \mapsto s$, $\tau \mapsto t$ is shown on the left.
For $s \neq t$ the automorphism group of this weighted Cayley digraph is $D_3$, and
the lattice of $A$-invariant polydiagonal subspaces is shown on the right. 
If $s = t$ the automorphism group of the weighted Cayley digraph is $D_6$, but 
we do not show the lattice of $A$-invariant polydiagonal subspaces in this case because it is so large.
A straightforward but lengthy calculation shows that there
are $31$ $A$-invariant polydiagonal subspaces in $15$ group orbits.
Among these, there are $18$ $A$-invariant anti-synchrony subspaces 
in $8$ group orbits, which are all evenly tagged following Theorem~\ref{input-output}.
\begin{center}
	\begin{tabular}{ccc}
		\begin{tabular}{c}
			\includegraphics{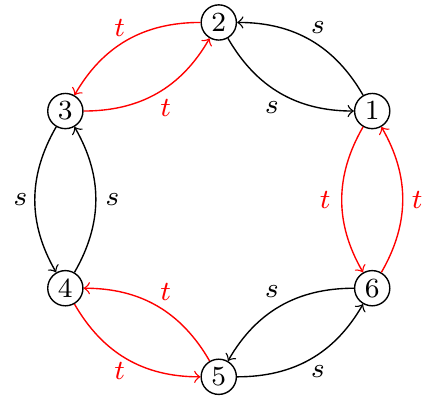}
		\end{tabular} & & 
		\begin{tabular}{c}
			\includegraphics{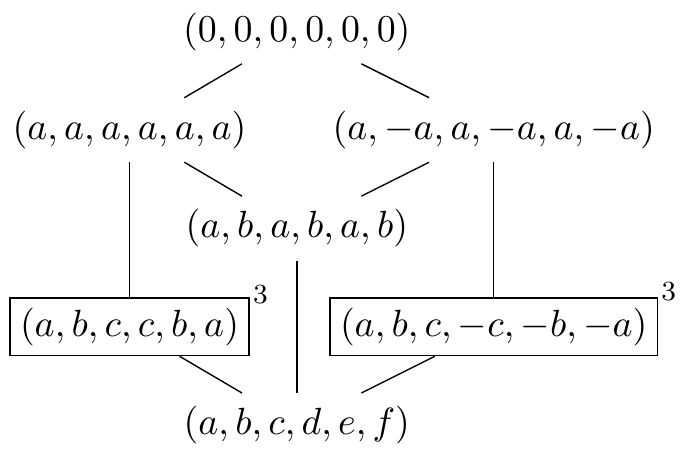}
		\end{tabular}
		\\ \\
		& & $s \neq t$
	\end{tabular}
\end{center}
\end{ex}

It was shown in 
\cite[Proposition 5.1]{NSS6} 
that every $L$-invariant polydiagonal subspace for the Laplacian matrix $L$ of a graph is either a synchrony subspace or
a fully tagged anti-synchrony subspace.  
Furthermore, it was
conjectured that every $L$-invariant anti-synchrony subspace is not just fully tagged, but is evenly tagged.

\begin{thr}[{\cite[Conjecture 5.3]{NSS6}}]
\label{conjecture53}
Let $L$ be the Laplacian matrix of a connected graph. Then every $L$-invariant anti-synchrony subspace is evenly tagged. 
\end{thr}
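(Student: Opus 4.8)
The plan is to derive Theorem~\ref{conjecture53} as an immediate special case of Theorem~\ref{input-output}, after checking that the Laplacian of a connected (undirected) graph meets the two hypotheses of that theorem: (i) the underlying weighted digraph is weight-balanced, and (ii) the eigenvalue $0$ of $L$ has geometric multiplicity one. Both are standard facts about graph Laplacians, but I would spell them out rather than just cite them.

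First I would recall that the ``corresponding digraph'' of a graph replaces each edge $\{i,j\}$ by the two arrows $(i,j)$ and $(j,i)$ with the default weight $1$, so the adjacency matrix $A$ is symmetric. For a symmetric $A$ the $i$-th row sum equals the $i$-th column sum for every $i$, which is exactly the condition that the weighted digraph is weight-balanced (equivalently, every vertex has $d^-(i) = d^+(i)$, so $b(i) = 0$). Hence hypothesis (i) of Theorem~\ref{input-output} holds. Second, for hypothesis (ii) I would argue that $0$ is a simple eigenvalue of $L$ when the graph is connected: $L$ is symmetric positive semidefinite since the associated quadratic form is $x^T L x = \sum_{\{i,j\} \in E} (x_i - x_j)^2$, and this vanishes exactly when $x$ is constant on each connected component; connectedness forces the constant vectors to be the whole kernel, so $\dim \ker L = 1$, i.e. the geometric multiplicity of $0$ equals one. (Alternatively one can invoke Remark~\ref{F_P2}, noting that the adjacency matrix of a connected graph is irreducible with non-negative off-diagonal entries.)

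With both hypotheses verified, Theorem~\ref{input-output} applies directly: every $L$-invariant anti-synchrony subspace is evenly tagged, which is precisely the statement of Theorem~\ref{conjecture53}. One could also phrase the conclusion via Theorem~\ref{main} with $\lambda = 0$ and eigenvector $v = \1$ (the constant vector is the Perron eigenvector here), noting that $v_i + v_j = 2 \neq 0$ for all $i, j$, so any $L$-invariant polydiagonal subspace is either a synchrony subspace containing $\1$ or an evenly tagged anti-synchrony subspace.

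I do not expect any genuine obstacle here, since the theorem is explicitly billed in the introduction as a corollary of the main theorems; the only ``work'' is the bookkeeping that an undirected graph yields a symmetric, hence weight-balanced, adjacency matrix, and that connectedness gives simplicity of the zero eigenvalue. If I wanted to be maximally self-contained I would include the quadratic-form computation $x^T L x = \sum_{\{i,j\}\in E}(x_i-x_j)^2$ and the short connectedness argument for $\dim\ker L = 1$; otherwise a one-line appeal to Remark~\ref{F_P2} and Theorem~\ref{input-output} suffices.
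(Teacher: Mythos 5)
Your proposal is correct and follows essentially the same route as the paper: check that the corresponding digraph of a graph is weight-balanced and that connectedness makes the zero eigenvalue of $L$ simple (the paper cites Remark~\ref{F_P2} where you also offer the quadratic-form argument), then apply Theorem~\ref{input-output}. The extra details you supply are fine but not a different approach.
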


\begin{proof}
The digraph corresponding to a graph is trivially weight-balanced. 
It is well known,
and follows from Remark \ref{F_P2},
that the eigenvalue 0 of the Laplacian matrix of a connected graph is simple.
The result now follows from Theorem~\ref{input-output}.
\end{proof}

\begin{remk}
Recall that a graph is undirected (also called bidirectional) and unweighted by our definition.
Consider System (\ref{ODEwithL}),
where $f$ is odd and $L$ is the Laplacian matrix of a connected graph.
Propsition \ref{dynamicallyInvariant} and Theorem \ref{conjecture53} state that
a subspace $W \subseteq (\R^k)^n$ is dynamically invariant
for any such coupled cell network if and only if 
$W$ is an $L$-invariant synchrony subspace or 
$W$ is an $L$-invariant evenly tagged subspace.
\end{remk}

\begin{ex}
\label{Kn}
Let $L$ be the Laplacian matrix of the complete graph with $n$ 
vertices.
The eigenvalues of $L$ are $\lam = 0$, with algebraic multiplicity 1 and eigenvector $\1$,
and $\lam = n$ with geometric and algebraic multiplicity $n-1$, 
and eigenspace $\spn (\1 )^\perp$.
It is easy to show that a polydiagonal subspace $\Delta \subseteq \R^n$ is $L$-invariant if and only if 
$\Delta$ is a synchrony subspace or $\Delta$ is an evenly tagged anti-synchrony subspace. 
\end{ex}

The following ``non-example'' demonstrates the necessity of the hypothesis 
that the graph is connected.

\begin{ex}
\label{3v1e}
Consider the graph, represented as a digraph, with its Laplacian matrix $L$ and 
lattice of $L$-invariant polydiagonal subspaces shown below:
\begin{center}
	\begin{tabular}{ccccc}
		\begin{tabular}{c}
			\includegraphics{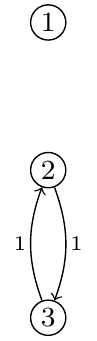}
		\end{tabular} & &
		\begin{tabular}{c}
			$L=\left[\begin{matrix}
			0 & 0 & 0\\
			0 & 1 & -1\\
			0 & -1 & 1
			\end{matrix}\right]$\\
		\end{tabular} & &
		\begin{tabular}{c}
			\includegraphics{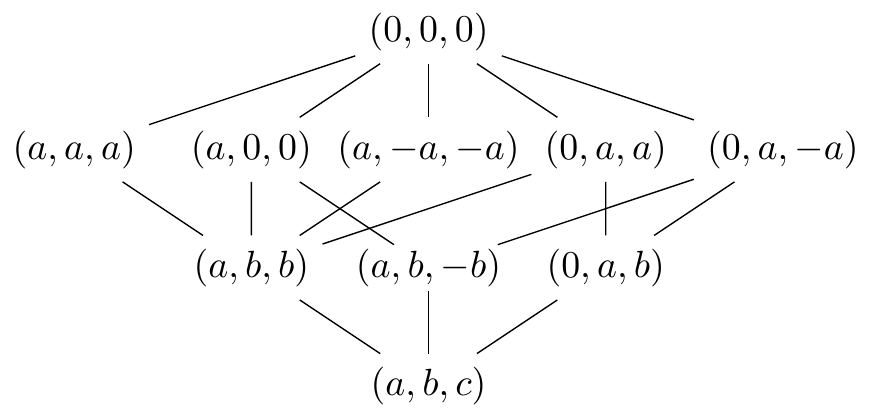}\\
		\end{tabular}\\
	\end{tabular}
\end{center}
Theorem~\ref{conjecture53} does not apply to this disconnected graph.  
Note that the eigenvalue 0 of $L$ has multiplicity 2 since the graph has 2 connected components.

Five of the ten $L$-invariant polydiagonal subspaces are anti-synchrony subspaces that are not evenly tagged.
These could not be present for a connected graph.

Note that every region in Figure~\ref{classification} is represented in this example.
The three synchrony subspaces have typical elements $(a,a,a)$,  $(a,b,b)$ and $(a,b,c)$.  
The four minimally tagged anti-synchrony subspaces have typical elements $(0,0,0)$, $(a,0,0)$, $(0, a, a)$ and $(0, a, b)$.
The two evenly tagged anti-synchrony subspaces have typical elements $(0,0,0)$ and $(0, a, -a)$.
The anti-synchrony subspace with typical element $(a, -a, -a)$ is fully but not evenly tagged.
The anti-synchrony subspace with typical element $(a, b, -b)$ is neither minimally nor fully tagged.
\end{ex}

\section{Counting Polydiagonal Subspaces}\label{Counting_polydiagonal_subspaces}
\label{sequences}

In this section, we count the number of polydiagonal subspaces in $\R^n$ of a given type.
It turns out many of the corresponding sequences are known in a different context, showing some surprising links to other areas of mathematics. 
%Two of these sequences are not
%listed in the Online Encyclopedia of Integer Sequences (OEIS) \cite{oeis}. 

%{J: A copy of this section was included in conjecture5.3/JimsOutline/conjecture.tex, and a pdf was saved to conjecture5.3/Article/version 1/backup-2021-08-08.pdf.  Then I purged a lot of stuff, like the $e_{2m,d}$ stuff.}

We define the following sequences:
\begin{itemize}
\item
$p_n$, the number of polydiagonal subspaces of $\R^n$, is a Dowling number 
\cite[\href{https://oeis.org/A007405}{A007405}]{oeis}, as shown in Proposition~\ref{BtypeResult} or Proposition~\ref{egfP}.
\item
$s_n$, the number of synchrony subspaces of $\R^n$, is the Bell number $B_n$ \cite[\href{https://oeis.org/A000110}{A000110}]{oeis},
which counts the partitions of a set with $n$ elements.
\item
$a_n = p_n - s_n$ is the number of anti-synchrony subspaces of $\R^n$.
\item
$m_n$, the number of minimally tagged anti-synchrony subspaces of $\R^n$, 
is the 2-Bell number $B_{n-1,2}$ \cite[\href{https://oeis.org/A005493}{A005493}]{oeis}, 
which counts the number of partitions of the set with $n$ elements and one distinguished block.
In our case, the distinguished block is $P_0$. 
\item
$f_n$, the number of fully tagged polydiagonal subspaces of $\R^n$, is element $n$ of the sequence
\cite[\href{https://oeis.org/A307389}{A307389}]{oeis}, as shown in Proposition \ref{egfF}.
\item
$e_n$, the number of evenly tagged polydiagonal subspaces of $\R^n$ \cite[\href{https://oeis.org/A355194}{A355194}]{oeis}, is computed using Proposition~\ref{egfE} or \ref{recurrence}. 
\end{itemize}
The number of each type of polydiagonal subspace is shown in Figure~\ref{numberOfPolydiagonals}. 
These are computed by the code at \cite{ourGitHub}, using the results of Subsections~\ref{EGF} and \ref{recursion}.

%The computation of $a_n = p_n - s_n$, the number of anti-synchrony subspaces of $\R^n$, is easy. The absence of $e_n$ is more interesting.

\label{prop:}  % Is this needed?

\begin{remk}
\label{mnisbn}
It is easy to see that $m_n = B_{n+1}-B_n$ since minimally tagged partitions of $\{1, \ldots, n\}$ are in one-to-one
correspondence with partitions of $\{1, \ldots, n+1\}$ for which $\{1, \ldots, n\} \cap [n+1]$ is not empty. 
Note that there are $B_n$ partitions of $\{1, \ldots, n+1\}$ for which $\{n+1\}$ is a class, as it remains to partition $\{1, \ldots, n\}$.  
\end{remk}

\begin{figure}
$$
\begin{array}{ll |rrrrrrrrr}
&  n &  0& 1 & 2 & 3 & 4 & 5 & 6 & 7 & 8 \\
 \hline
 \text{polydiagonal} & p_n & 1& 2 & 6 & 24 & 116 & 648 & 4\,088 & 28\,640 & 219\,920 \\
 \text{synchrony}& s_n  & 1& 1 & 2 & 5 & 15 & 52 & 203 & 877 & 4\,140 \\
 \text{anti-synchrony}& a_n & 0 & 1 & 4 & 19 & 101 & 596 & 3\,885 & 27\,763 & 215\,780 \\
 \text{minimally tagged}& m_n & 0 & 1 & 3 & 10 & 37 & 151 & 674 & 3\,263 & 17\,007 \\
 \text{fully tagged}& f_n & 1 &1 & 2 & 7 & 29 & 136 & 737 & 4\,537 & 30\,914 \\
 \text{evenly tagged}& e_n &1 & 1 & 2 & 4 & 13 & 41 & 176 & 722 & 3\,774 \\
\end{array}
$$
\caption{
\label{numberOfPolydiagonals}
The number of various types of polydiagonal subspaces of $\R^n$.
}
\end{figure}

\begin{remk}
Figure~\ref{numberOfPolydiagonals} shows how small $e_n$ is compared to $a_n$. 
This greatly strengthens the result of Theorem \ref{main} 
and its corollaries:
instead of checking all possible anti-synchrony subspaces for invariance,
only the smaller subset of the evenly tagged ones has to be considered.
\end{remk}

\subsection{Polydiagonal subspaces}

To show that the number of polydiagonal subspaces are the Dowling numbers, 
we start by giving an alternative way of characterizing polydiagonal subspaces. 
As in~\cite{Adler}, we define the following.

%\note{N: $\{-n, \dots, n\}$ $\{-n, \dots, n\}$ is written out very often. It's long, perhaps some notation for it would be useful. Something like $\mathbb{Z}_{\pm n}:=[-n,n]\cap\mathbb{Z}$ .}

\begin{defi}
A  \emph{ $B$-type partition} of $\{-n, \dots, n\}$ is a partition $\mathcal{Q}$ of this set satisfying
\begin{enumerate}
\item If $Q = \{j_1, \dots, j_k\}$ is a class in $\mathcal{Q}$, then so is $-Q := \{-j_1, \dots, -j_k\}$.
\item There is precisely one class $Q_0 \in \mathcal{Q}$ for which $Q_0 = -Q_0$.
\end{enumerate}
\end{defi}

Note that $Q_0$ is necessarily the class containing the element $0$. 

The reason for introducing $B$-type partitions is as follows. Given a tagged partition $\mathcal{P}$ of $\{1, \dots, n\}$, we may create a $B$-type partition $\Phi(\mathcal{P})$ of $\{-n, \dots, n\}$ with classes given by 
\begin{itemize}
\item $P$ and $-P$ for each $P \in \mathcal{P}$ on which $*$ is not defined;
\item $P\cup -P^*$ for each $P \in \mathcal{P}$ for which $P^* \not= P$;
\item $P \cup \{0\} \cup -P$ if $P$ exists such that $P^* = P$, and $\{0\}$ otherwise.
\end{itemize}
%In fact, a refinement of $\Phi(\mathcal{P})$ is given by the partition $\mathcal{P} \cup \{ \{0\}\} \cup  \{-P \mid P \in \mathcal{P}\}$. 
One can verify that $\Phi(\mathcal{P})$ is indeed a $B$-type partition of $\{-n, \dots, n\}$. 

The number of $B$-type partitions of $\{-n, \dots, n\}$ is given by the so-called Dowling numbers, see for instance~\cite{Adler}.

\begin{ex}
	We return to the tagged partition of Example \ref{ex:polydiagonal}, given by
	$$
	\mathcal{P}=\{\{1\},\{2,4\},\{3\},\{5,6\}\}
	$$ 
	with $\{1\}^*=\{2,4\}$ and $\{5,6\}^*=\{5,6\}$. The corresponding $B$-type partition of $\{-6,  \dots,   6\}$ is
	$$
	\Phi(\mathcal{P})=\{\{-4, -2, 1\},\{-1,2,4\},\{3\},\{-3\},\{-6, -5, 0, 5,6\}\}\, .
	$$ 
\end{ex}

Given a $B$-type partition $\mathcal{Q}$ of $\{-n, \dots, n\}$, we construct a tagged partition $\Psi(\mathcal{Q})$ as follows. 
Given $Q \in \mathcal{Q}$ we define
\begin{equation*}
Q^+ := \{k \in Q\mid k>0\}
\end{equation*}
to be the set of positive elements of $Q$.
We then set
\begin{equation*}
\Psi(\mathcal{Q}) = \{Q^+\mid Q \in \mathcal{Q} \text{ and } Q^+ \not= \emptyset\}\, .
\end{equation*}
Note that this clearly gives a partition of $\{1, \dots, n\}$. The involution $*$ is defined on those classes $Q^+ \in \Psi(\mathcal{Q})$ for which $(-Q)^+ \not= \emptyset$, in which case we set $(Q^+)^* = (-Q)^+$. It not hard to see that this indeed gives a partial involution. 

One can verify that $\Phi$ and $\Psi$ are inverses, so we have the following result.

\begin{prop}
\label{BtypeResult}
The tagged partitions of $\{1, \dots, n\}$ are in one-to-one correspondence with the B-type partitions of $\{-n, \dots, n\}$.  Thus the number of polydiagonal subspaces of $\R^n$ are the Dowling numbers.
\end{prop}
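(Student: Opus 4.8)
The plan is to prove the stated bijection by verifying that the two maps $\Phi$ and $\Psi$ constructed above are well defined and mutually inverse; the counting assertion then follows immediately from facts already available. Concretely, once $\Phi$ and $\Psi$ are shown to be inverse bijections, we combine this with the fact, recalled from~\cite{Adler}, that the $B$-type partitions of $\{-n, \dots, n\}$ are enumerated by the Dowling numbers, and with the bijection $\mathcal{P} \mapsto \Delta_{\mathcal{P}}$ between tagged partitions of $\{1, \dots, n\}$ and polydiagonal subspaces of $\R^n$ established in Section~\ref{TaggedPartitionsandPolydiagonalSubspaces}.

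First I would check that $\Phi(\mathcal{P})$ is genuinely a $B$-type partition. The blocks listed in the three bullets defining $\Phi$ are pairwise disjoint and exhaust $\{-n, \dots, n\}$: each $k \in \{1, \dots, n\}$ lies in exactly one block of $\mathcal{P}$, so its positive occurrence is placed in exactly one block of $\Phi(\mathcal{P})$, its negative $-k$ is placed in the mirrored block, and $0$ is absorbed into the unique self-negating block. Closure under negation is built into the construction. For the uniqueness clause, the only block equal to its own negative is $P_0 \cup \{0\} \cup -P_0$ when the partial involution of $\mathcal{P}$ has its (at most one) fixed point $P_0$, and $\{0\}$ otherwise; here is exactly where the ``at most one fixed point'' requirement of Definition~\ref{tagged} is needed, since it is what forces $Q_0$ to be unique.

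Next I would check that $\Psi(\mathcal{Q})$ is a tagged partition: the nonempty sets $Q^+$ are disjoint and cover $\{1, \dots, n\}$ because the $Q$ cover $\{-n, \dots, n\}$, and the assignment $(Q^+)^* = (-Q)^+$ is a well-defined involution on its domain since $-(-Q) = Q$. It has at most one fixed class, because $(Q^+)^* = Q^+$ forces $(-Q)^+ = Q^+$; as blocks of a $B$-type partition are equal or disjoint, this can only occur for the self-negating block $Q_0$. I would then verify $\Psi \circ \Phi = \mathrm{id}$ and $\Phi \circ \Psi = \mathrm{id}$ by tracking a generic block through both constructions, splitting into the three cases of the definition of $\Phi$ (a block $P$ on which $*$ is undefined, a block $P$ with $P^* \neq P$, and the fixed point $P_0$ or its absence): in each case the positive part of the resulting $B$-type block returns the original block of $\mathcal{P}$ and the $*$-relation is recovered from the negation pairing, and conversely applying $\Phi$ to $\Psi(\mathcal{Q})$ reassembles each pair $\{Q, -Q\}$, or the single block $Q_0$, from its positive part.

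The main obstacle I anticipate is purely bookkeeping around the distinguished block: making sure that the presence or absence of a fixed point of the partial involution matches exactly whether $Q_0$ is the singleton $\{0\}$, and that the case distinction ``$P^* = P$'' versus ``$P^* \neq P$'' does not cause any double counting or mismatch when chasing blocks in both directions. None of this is deep, but the case analysis has to be laid out carefully enough that the informal ``one can verify'' in the surrounding text is fully justified.
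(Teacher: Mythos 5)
Your proposal is correct and follows essentially the same route as the paper: the paper's argument consists precisely of the maps $\Phi$ and $\Psi$ defined in the surrounding text, the assertion (left as ``one can verify'') that they are mutually inverse, and the citation of \cite{Adler} for the Dowling-number count of $B$-type partitions. You simply spell out the well-definedness and inverse checks (including the role of the ``at most one fixed point'' condition and the uniqueness of the self-negating block $Q_0$) that the paper leaves implicit, so there is no substantive difference in approach.
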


\subsection{Freely tagged polydiagonal subspaces}

The following definition will be useful for counting various types of polydiagonal subspaces in Subsection~\ref{EGF}. Freely tagged subspaces are also interesting in their own right.

\begin{defi}
\label{freely}
Let $\mathcal{P}$ be a tagged partition of $C = \{1,\ldots,n\}$. The polydiagonal subspace $\Delta_{\mathcal{P}}$ is called \emph{freely tagged} if the partial involution has no fixed point.
\end{defi}

The typical element of a freely tagged subspace has no zero component. 
For example, $(a, b, a, -a)$ is the typical element of a freely tagged subspace, but $(a, b, 0, -a)$ is not.

\begin{ex}
All of the nontrivial $A$-invariant subspaces shown in Example~\ref{D3Cayley} are freely tagged.
\end{ex}

Another class of coupled cell networks where freely tagged subspaces are important is when the domain of $f$ does not contain $0 \in \R^k$. 
Given a subset $D \subseteq \R^k$ and a tagged partition $\mathcal{P}$, we define the subset
\[
D_{\mathcal{P}} :=\{x\in D^n \mid x_{i}=x_{j}\text{ if }[i]=[j]\text{ and }x_{i}=-x_{j}\text{ if }[i]^{*}=[j]\} \subseteq \R^k \otimes \DeltaP \, .
\]
Note that $D_{\mathcal{P}}$ is not a vector space if $D$ is not a vector space, 
and it might be empty.
For example, if $\calP$ has a non-trivial involution, and $-D\cap D= \emptyset$, then $D_\calP$ is empty.
Likewise, if the involution has a non-empty fixed point $P_0$, and $0 \not \in D$,  then the set $D_{\mathcal{P}}$ is empty.

We can give a version of the easy direction of Proposition~\ref{dynamicallyInvariant}.
Note that the domain of an odd function $f:D\to V$ satisfies  $D = -D$.

\begin{prop}
Let $f: D \to \R^k$ with $\emptyset\ne D\subseteq\R^k$, and let $\DeltaP$ be an $M$-invariant polydiagonal subspace of $\R^n$.
Then $D_{\mathcal{P}}$ is a non-empty dynamically invariant subset of System~$(\ref{ODE})$ if one of the following conditions holds:
\begin{enumerate}
\item 
$\DeltaP$ is freely tagged and $f$ is odd with $0 \not \in D $;
\item 
$\DeltaP$ is a synchrony subspace. 
\end{enumerate}
\end{prop}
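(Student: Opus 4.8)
The plan is to verify the two parts essentially by unwinding the definitions, following the ``easy direction'' of Proposition~\ref{dynamicallyInvariant}. The crucial structural observation, already isolated in Equation~(\ref{Minv}), is that $M$-invariance of $\DeltaP$ means the coupling term $\big(H\sum_j M_{1,j}x_j, \ldots, H\sum_j M_{n,j}x_j\big)$ lies in $\R^k \otimes \DeltaP$ whenever $x \in \R^k \otimes \DeltaP$. Since $D_{\mathcal{P}} \subseteq \R^k \otimes \DeltaP$, the same inclusion holds for $x \in D_{\mathcal{P}}$. So the only thing requiring care is that the \emph{internal} term $(f(x_1), \ldots, f(x_n))$ keeps the state inside $D_{\mathcal P}$ (not merely inside $\R^k \otimes \DeltaP$), together with nonemptiness of $D_{\mathcal P}$.

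First I would dispose of nonemptiness. For part (1): $\DeltaP$ freely tagged means the partial involution has no fixed point, so the only constraints defining $D_{\mathcal P}$ are of the form $x_i = x_j$ (for $[i]=[j]$) and $x_i = -x_j$ (for $[i]^* = [j]$); picking, for each class $P$ on which $*$ is undefined an arbitrary value in $D$, and for each pair $P, P^*$ a value $d \in D$ with $-d \in D$ (which exists since $0 \notin D$ and $D = -D$ for an odd $f$, so $D$ is nonempty and symmetric) shows $D_{\mathcal P} \neq \emptyset$. For part (2): $\DeltaP$ a synchrony subspace means there are no $x_i = -x_j$ constraints and no fixed point, so any constant assignment of a single element of $D$ to all cells lies in $D_{\mathcal P}$, hence $D_{\mathcal P} \neq \emptyset$.

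Next I would check that along the flow the state stays in $D_{\mathcal P}$. Write System~(\ref{ODE}) as $\dot x_i = F_i(x) = f(x_i) + H\sum_j M_{i,j}x_j$. For part (2), invariance is immediate: $\R^k \otimes \DeltaP$ is dynamically invariant for \emph{any} $f$ by the synchrony case of the discussion preceding Proposition~\ref{dynamicallyInvariant}, and since the only constraints are equalities $x_i = x_j$, the set $D^n$ is also preserved (if $x_i(t) = x_j(t)$ for all $t$ and $x_i(0) \in D$ then $x_i(t) \in D$ trivially, as $D$ is where $f$ is defined and the solution exists). For part (1), one checks that $F$ respects the defining relations of $D_{\mathcal P}$: if $[i] = [j]$ then $F_i(x) = F_j(x)$ because $f(x_i) = f(x_j)$ and $\sum_k M_{i,k}x_k = \sum_k M_{j,k}x_k$ (this last from $M$-invariance via Equation~(\ref{Minv}), reading off equal components), so $x_i$ and $x_j$ stay equal; if $[i]^* = [j]$ then $F_i(x) = -F_j(x)$ because $f$ is odd gives $f(x_i) = f(-x_j) = -f(x_j)$ and again $\sum_k M_{i,k}x_k = -\sum_k M_{j,k}x_k$, so $x_i$ and $-x_j$ stay equal. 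Since the partial involution has no fixed point there are no $x_i = 0$ constraints to worry about, and because $0 \notin D$ the componentwise dynamics never forces a coordinate out of $D$ through the origin. Formally one invokes uniqueness of solutions: the restriction of the vector field to the affine-linear relations cutting out $D_{\mathcal P}$ inside $D^n$ is well-defined and tangent, so a solution starting in $D_{\mathcal P}$ stays there on its interval of existence.

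The main obstacle, such as it is, is bookkeeping the odd-function argument for part (1): one must be careful that ``$f$ odd'' is only meaningful because $D = -D$, and that the absence of a fixed point in the involution is exactly what removes the need for $f(0)=0$ (or even $0 \in D$). Once that is flagged, both parts are routine verifications of the defining relations' being preserved by $F$, exactly parallel to the easy direction of Proposition~\ref{dynamicallyInvariant}, so I would keep the write-up short and refer back to that proof for the mechanism.
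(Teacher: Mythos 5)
Your proposal is correct and follows essentially the same route as the paper's proof, which simply notes that the result follows from Equation~(\ref{Minv}) together with local existence/uniqueness of solutions, and that the hypotheses on $D$ (in particular $D=-D$ for odd $f$, and no fixed-point class when freely tagged) guarantee $D_{\mathcal P}\neq\emptyset$. Your write-up just makes explicit the tangency check for the relations $x_i=x_j$ and $x_i=-x_j$ and the explicit elements witnessing nonemptiness, which is exactly the intended argument.
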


\begin{proof}
The result follows from Equation~(\ref{Minv}), using the local existence of solutions.
The assumptions on $D$ guarantee that $D_{\mathcal P}$ is non-empty.
\end{proof}

\begin{ex}
Consider $n$ coupled cells with the dynamics 
$$
{\ddot u}_i = -{\dot u}_i -u_i + \frac 1 {u_i}  + (A u)_i .
$$
This can be put into the form of System~(\ref{ODE}) with the same $H$ matrix defined in Example~\ref{van_der_Pol} but with modified
internal dynamics.
The domain of $f: D \to \R^2$ defined by $f(u,v) = (v, -v  -u + 1/u )$ is $D = \{(u,v) \in \R^2 \mid u \neq 0\}$.
While $f$ is odd, $0 \not\in D$. If $\DeltaP$ is a freely tagged $A$-invariant subspace, then $D_{\mathcal P}$ is a dynamically invariant set 
that is not a subspace of $(\R^2)^n$.
\end{ex}

\subsection{Exponential generating functions}
\label{EGF}
Polydiagonal subspaces correspond to partitions. So the natural way
to count them is to compute their exponential generating functions.
Recall that the \emph{Exponential Generating Function} (EGF)
of a sequence $(a_{n})$ is the formal power series 
\[
A(x):=\sum_{n=0}^{\infty}a_{n}\frac{x^{n}}{n!}.
\]
The sequence $(a_n)$ can be easily found from $A(x)$ using Taylor expansion. 
Most computer algebra systems have a built in command for this.  
For example, the following code computes the first few terms of the sequence $(1, 1, \ldots )$ whose EGF is $e^x$:
\vspace{-2.5mm}
\begin{lstlisting}[basicstyle={\footnotesize\ttfamily},frame=single,backgroundcolor=\color{lightgray!20},title={Sage:}]
p=taylor(exp(x),x,0,10)
[p.coefficient(x,n)*factorial(n) for n in (0..10)]
\end{lstlisting}
\vspace{-2.5mm}
\begin{lstlisting}[basicstyle={\footnotesize\ttfamily},frame=single,backgroundcolor=\color{lightgray!20},title=Mathematica:]
CoefficientList[Series[E^x,{x,0,10}],x]*Range[0,10]!
\end{lstlisting}

We are going to use the following fundamental EGF results.

\begin{thr}
\cite[Theorem 8.21]{Bona} $($Product formula$)$ Let $a_{n}$ be
the number of ways to build a certain structure on an $n$-element
set, and let $b_{n}$ be the number of way to build another structure
on an $n$-element set. Let $c_{n}$ be the number of ways to split
$\{1,\ldots,n\}$ into the disjoint union of the subsets $S$ and
$T$, and then build a structure of the first kind on $S$, and a
structure of the second kind on $T$. If $A(x)$, $B(x)$, and $C(x)$
are the respective EGFs of the sequences
$(a_{n})$, $(b_{n})$, and $(c_{n})$, then $C(x)=A(x)B(x)$.
\end{thr}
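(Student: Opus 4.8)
The plan is to reduce the identity $C(x)=A(x)B(x)$ to the binomial convolution $c_n=\sum_{k=0}^{n}\binom{n}{k}a_k b_{n-k}$, and then to observe that this convolution is precisely the multiplication rule for exponential generating functions. First I would prove the convolution identity by direct enumeration. To build a ``split structure'' on $\{1,\ldots,n\}$ one makes three successive choices: the subset $S\subseteq\{1,\ldots,n\}$, a structure of the first kind on $S$, and a structure of the second kind on $T:=\{1,\ldots,n\}\setminus S$. If $\#S=k$, there are $\binom{n}{k}$ choices for $S$; then, since the number of first-kind structures on a finite set depends only on its cardinality, there are $a_k$ ways to place such a structure on $S$, and similarly $b_{n-k}$ ways to place a second-kind structure on $T$. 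These choices are independent, so summing over $k$ from $0$ to $n$ gives $c_n=\sum_{k=0}^{n}\binom{n}{k}a_k b_{n-k}$.

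Next I would expand the product of the two formal power series with the Cauchy product formula:
\[
A(x)B(x)=\left(\sum_{i=0}^{\infty}a_i\frac{x^i}{i!}\right)\left(\sum_{j=0}^{\infty}b_j\frac{x^j}{j!}\right)=\sum_{n=0}^{\infty}\left(\sum_{k=0}^{n}\frac{a_k}{k!}\cdot\frac{b_{n-k}}{(n-k)!}\right)x^n.
\]
The coefficient of $x^n$ equals $\frac{1}{n!}\sum_{k=0}^{n}\binom{n}{k}a_k b_{n-k}$, which by the convolution identity is $c_n/n!$. Hence $A(x)B(x)=\sum_{n=0}^{\infty}c_n\frac{x^n}{n!}=C(x)$, as required.

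I do not expect a genuine obstacle: the statement is classical, and once the convolution identity is in hand the remainder is purely formal manipulation of power series. The only points requiring care are the bookkeeping that the three choices defining a split structure are truly independent, and the implicit hypothesis --- already built into the phrasing ``a structure on an $n$-element set'' --- that the counts $a_k$ and $b_k$ depend on the underlying set only through its size. Since this result is quoted from \cite{Bona}, one may alternatively simply cite it rather than reproduce the argument.
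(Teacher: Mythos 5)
Your proof is correct: the binomial convolution $c_n=\sum_{k=0}^{n}\binom{n}{k}a_k b_{n-k}$ follows from the counting argument you give, and the Cauchy product computation then yields $C(x)=A(x)B(x)$, which is exactly the classical argument found in B\'ona. Note that the paper itself offers no proof of this statement — it is quoted from \cite{Bona} as a known tool — so your write-up simply reproduces the standard textbook proof, and citing the source, as you suggest at the end, would also have sufficed.
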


\begin{thr}
\cite[Theorem 8.25]{Bona} $($Exponential formula$)$ Let $a_{n}$
be the number of ways to build a certain structure on an $n$-element
set, satisfying $a_{0}=0$. Let $h_{n}$ be the number of ways to
partition the set $\{1,\ldots,n\}$ into an unspecified number of
non-empty subsets, and then build a structure of the given kind on
each of these subsets. Set $h_{0}=1$. If $A(x)$ and $H(x)$ are
the respective EGFs of sequences $(a_{n})$
and $(h_{n})$, then $H(x)=e^{A(t)}$. 
\end{thr}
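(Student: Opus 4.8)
\section*{Proof proposal}

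The plan is to derive the Exponential formula from the Product formula (Theorem~8.21), which appears just above, by expanding $e^{A(x)}$ as a power series in $A(x)$ and giving each term a combinatorial reading.

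First I would write $e^{A(x)} = \sum_{k=0}^{\infty} \frac{A(x)^k}{k!}$ and identify the coefficients of $A(x)^k$. Applying the Product formula with the second structure taken to be ``an ordered $(k-1)$-tuple of decorated blocks,'' and inducting on $k$, one sees that for each fixed $k \geq 1$ the series $A(x)^k$ is the EGF of the sequence whose $n$-th term counts the ways to split $\{1,\ldots,n\}$ into an \emph{ordered} $k$-tuple $(S_1,\ldots,S_k)$ of subsets and then build a structure of the given kind on each $S_i$. Because $a_0 = 0$, any such ordered tuple with some $S_i$ empty contributes a factor $a_0 = 0$ and so is not counted; hence the coefficient of $x^n/n!$ in $A(x)^k$ equals the number of ordered set partitions of $\{1,\ldots,n\}$ into exactly $k$ \emph{non-empty} blocks, each decorated by a structure.

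Next I would account for the division by $k!$. Since the $k$ blocks of such an ordered tuple are non-empty and pairwise disjoint, they are distinct as subsets, so the $k!$ permutations of $(S_1,\ldots,S_k)$ are $k!$ distinct ordered tuples all yielding the same \emph{unordered} decorated partition into $k$ blocks. Therefore the coefficient of $x^n/n!$ in $A(x)^k/k!$ is precisely the number of ways to partition $\{1,\ldots,n\}$ into $k$ non-empty blocks and build a structure of the given kind on each. Summing over $k$: for $n \geq 1$ only $1 \leq k \leq n$ can contribute (a partition of an $n$-element set has at most $n$ blocks), and the total over all admissible $k$ is exactly $h_n$ by definition. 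For $n = 0$ the only surviving term is $k = 0$, which contributes $1$; this matches $h_0 = 1$ and is consistent with $A(0) = a_0 = 0$, so that the constant term of $e^{A(x)}$ is $e^0 = 1$. This establishes $H(x) = e^{A(x)}$.

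The only subtle point — hence the ``main obstacle'' — is the passage from ordered tuples of blocks (which is what iterating the Product formula naturally produces) to unordered partitions: one must note that the hypothesis $a_0 = 0$ is exactly what forces every block to be non-empty, so that each unordered decorated partition into $k$ parts is hit by precisely $k!$ ordered tuples and the division by $k!$ introduces no overcounting or undercounting. Beyond that, one should simply check that the $n = 0$ term on each side agrees, which is why the conventions $a_0 = 0$ and $h_0 = 1$ are built into the statement.
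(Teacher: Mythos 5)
The paper does not prove this statement at all: it is quoted verbatim as \cite[Theorem 8.25]{Bona} and used as a black box in Section 7, so there is no internal proof to compare yours against. Your argument is correct, and it is essentially the standard derivation of the exponential formula from the product formula (the same route taken in the cited reference). You handle the two genuinely delicate points properly: the hypothesis $a_0=0$ both kills every ordered tuple containing an empty block and guarantees that $A(x)^k$ has valuation at least $k$, so the infinite sum $\sum_{k\ge 0} A(x)^k/k!$ is well defined coefficientwise with only $k\le n$ contributing to $x^n$; and the division by $k!$ is justified because non-empty pairwise disjoint blocks are distinct as sets, so the symmetric group acts freely on the ordered decorated tuples and each unordered decorated partition into $k$ blocks is counted exactly $k!$ times. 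The check that the $n=0$ coefficients agree ($h_0=1$ versus the $k=0$ term of the exponential) closes the argument. The only cosmetic slip is the typo $e^{A(t)}$ for $e^{A(x)}$, which is inherited from the statement itself.
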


We also need a few more facts. Recall that the \emph{Stirling number}
$S_{n,k}$ of the second kind counts the number of ways to partition
a set with $n$ elements into $k$ nonempty subsets.

\begin{prop}\label{stirlingprop}
\cite[Example 8.23]{Bona} The EGF of
the Stirling numbers $(S_{n,k})_{n\ge0}$ is $S_{k}(x)=\frac{(e^{x}-1)^{k}}{k!}$.
\end{prop}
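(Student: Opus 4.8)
To prove Proposition~\ref{stirlingprop}, the plan is to realize $(e^x-1)^k/k!$ by iterating the product formula, \cite[Theorem~8.21]{Bona}. Recall that $S_{n,k}$ counts the partitions of $\{1,\ldots,n\}$ into exactly $k$ nonempty blocks. The basic building block is the structure ``being a nonempty set'': there is exactly one way to equip a nonempty finite set with this structure and no way to equip the empty set, so its EGF is $\sum_{n\ge 1}x^n/n! = e^x-1$.

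First I would pass to an ordered count. Let $t_{n,k}$ be the number of ways to write $\{1,\ldots,n\}$ as an \emph{ordered} $k$-tuple $(B_1,\ldots,B_k)$ of pairwise disjoint nonempty subsets whose union is $\{1,\ldots,n\}$. Each unordered partition into $k$ blocks arises from exactly $k!$ such tuples, so $t_{n,k}=k!\,S_{n,k}$; since dividing a formal power series by the nonzero constant $k!$ is harmless, it suffices to show that the EGF of $(t_{n,k})_{n\ge 0}$ is $(e^x-1)^k$.

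This I would establish by induction on $k$. The case $k=0$ is immediate: a $0$-tuple of blocks partitions $\{1,\ldots,n\}$ only when $n=0$, so the EGF is the constant $1=(e^x-1)^0$. For the inductive step, splitting $\{1,\ldots,n\}$ into an ordered pair of subsets $S$ and $T$, placing the structure ``ordered $(k-1)$-tuple of nonempty blocks partitioning $S$'' on $S$ (EGF $(e^x-1)^{k-1}$ by the inductive hypothesis) and the structure ``nonempty set'' on $T$ (EGF $e^x-1$), is precisely the data of an ordered $k$-tuple of nonempty blocks partitioning $\{1,\ldots,n\}$, with $T=B_k$. The product formula then yields EGF $(e^x-1)^{k-1}(e^x-1)=(e^x-1)^k$, completing the induction.

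The only point requiring care — and essentially the sole obstacle — is bookkeeping of the empty block: the relevant building-block EGF must be $e^x-1$ rather than $e^x$, which is exactly what forces $(e^x-1)^k$ to have no terms of degree below $k$, matching $S_{n,k}=0$ for $n<k$. (An alternative would be to use a marking-variable refinement of the exponential formula, but since the text only quotes the unmarked Theorem~8.25, iterating the product formula is the cleaner route.) Everything else is a routine application of the two generating-function theorems already quoted, so the argument is short.
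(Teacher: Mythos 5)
Your proof is correct. The paper does not actually prove this proposition—it is quoted with a citation to B\'ona's Example~8.23—and your argument (pass to ordered $k$-tuples of nonempty blocks, iterate the product formula with building block $e^x-1$, then divide by $k!$) is precisely the standard derivation given in that source, including the essential care that the block EGF is $e^x-1$ rather than $e^x$.
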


\begin{prop}
\cite[Example 8.24]{Bona} The EGF of
the Bell numbers is $B(x)=e^{e^{x}-1}$.
\end{prop}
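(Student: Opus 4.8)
The plan is to derive this as an immediate application of the Exponential formula (Theorem~8.25 of \cite{Bona}) quoted just above. Recall that the Bell number $B_n$ is by definition the number of partitions of $\{1,\ldots,n\}$ into an unspecified number of nonempty blocks. To fit this into the Exponential formula, I would take the ``structure built on each block'' to be the trivial one: on any nonempty finite set there is exactly one way to declare it a block. Thus the relevant sequence $(a_n)$ counting structures on an $n$-element set is $a_n = 1$ for $n \geq 1$ and $a_0 = 0$, the latter being exactly the normalization required by the hypothesis of the theorem.

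Next I would compute the EGF of $(a_n)$ directly from the definition:
\[
A(x) = \sum_{n=1}^{\infty} \frac{x^n}{n!} = e^x - 1 .
\]
The Exponential formula then states that the EGF $H(x)$ of the sequence $(h_n)$, where $h_n$ counts the ways to partition $\{1,\ldots,n\}$ and build one of these trivial structures on each block, equals $e^{A(x)} = e^{e^x - 1}$, with the convention $h_0 = 1$. Since building the trivial structure on each block contributes nothing, $h_n$ is exactly the number of partitions of $\{1,\ldots,n\}$, i.e. $h_n = B_n$; and $h_0 = 1 = B_0$ matches the usual convention that the empty set has a single (empty) partition. Hence $B(x) = e^{e^x - 1}$.

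As a consistency check I would also mention the route through Stirling numbers: using $B_n = \sum_{k=0}^{n} S_{n,k}$ together with Proposition~\ref{stirlingprop}, and noting that for each fixed $n$ only the finitely many terms with $k \leq n$ are nonzero, one may sum the EGFs of $(S_{n,k})_n$ over $k$ to get $\sum_{k=0}^{\infty} \frac{(e^x-1)^k}{k!} = e^{e^x-1}$, which is again $B(x)$.

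There is essentially no hard step here; the only points requiring a little care are verifying that the hypotheses of the Exponential formula genuinely hold — in particular that $a_0 = 0$ and that the per-block structure is unique — and that the base case $h_0 = 1$ agrees with $B_0 = 1$.
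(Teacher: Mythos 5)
Your derivation is correct: the paper states this result without proof, simply citing \cite[Example 8.24]{Bona}, and your argument (apply the Exponential formula with the trivial per-block structure, $a_0=0$, $a_n=1$ for $n\ge 1$, so $A(x)=e^x-1$ and $B(x)=e^{e^x-1}$) is exactly the standard derivation given in that cited source. The Stirling-number consistency check is fine as well; nothing is missing.
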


The EGF of $(e_{n})$ will involve the so-called modified Bessel functions, which are solutions to certain differential equations. 
We will not give a precise definition here, but see for instance \cite{abramowitz+stegun}.

The following is a slight modification of a formula of \cite{bessel}. 
See also \cite[\href{https://oeis.org/A000984}{A000984}]{oeis}.

\begin{prop}
The modified Bessel function of the first kind $I_{0}$ satisfies
the identity $\sum_{n=0}^{\infty}{2n \choose n}\frac{x^{2n}}{(2n)!}=I_{0}(2x)$.
\end{prop}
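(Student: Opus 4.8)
The plan is to simply unwind the power series definition of the modified Bessel function $I_0$ and match it term by term against the left-hand side. Recall that $I_0$ is the entire function given by the everywhere-convergent series
\[
I_0(z) = \sum_{m=0}^{\infty} \frac{1}{(m!)^2}\left(\frac{z}{2}\right)^{2m},
\]
which is the standard series representation (see \cite{abramowitz+stegun}); one could alternatively take as a starting point that $I_0$ solves the modified Bessel equation of order $0$ and extract this series, but for our purposes the series is the cleanest entry point.

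First I would substitute $z = 2x$ to obtain $I_0(2x) = \sum_{m=0}^{\infty} \frac{x^{2m}}{(m!)^2}$. Next I would rewrite the general coefficient on the left-hand side: since $\binom{2n}{n} = \frac{(2n)!}{(n!)^2}$, we have
\[
\binom{2n}{n}\frac{1}{(2n)!} = \frac{(2n)!}{(n!)^2}\cdot\frac{1}{(2n)!} = \frac{1}{(n!)^2}.
\]
Hence $\sum_{n=0}^{\infty}\binom{2n}{n}\frac{x^{2n}}{(2n)!} = \sum_{n=0}^{\infty}\frac{x^{2n}}{(n!)^2}$, which is exactly the series for $I_0(2x)$ computed above, proving the identity as formal power series (and as analytic functions, since both series converge for all $x$).

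There is essentially no obstacle here: the only thing to be slightly careful about is stating precisely which representation of $I_0$ is being invoked, so that the identity is not circular. I would therefore open the proof by pinning down the series definition of $I_0$, then the argument is the two-line coefficient comparison above. If one prefers to avoid quoting the series for $I_0$, an alternative is to verify that $y(x) := \sum_{n\ge 0}\binom{2n}{n}\frac{x^{2n}}{(2n)!}$ satisfies the same second-order ODE and normalization ($y(0)=1$, $y'(0)=0$) as $x\mapsto I_0(2x)$, but this is strictly more work than the direct coefficient match and I would not pursue it.
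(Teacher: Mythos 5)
Your proof is correct, and it is essentially the canonical argument: with the standard series $I_0(z)=\sum_{m\ge 0}\frac{(z/2)^{2m}}{(m!)^2}$, the substitution $z=2x$ and the observation $\binom{2n}{n}\frac{1}{(2n)!}=\frac{1}{(n!)^2}$ give the identity by term-by-term comparison, with no convergence issues since both series are entire. The paper itself does not prove this proposition at all --- it simply cites it as a slight modification of a formula from the literature (and points to the relevant OEIS entry) --- so your two-line coefficient match supplies a self-contained verification of exactly what the paper takes for granted; your remark about pinning down which representation of $I_0$ is being used, so as to avoid circularity, is the only point of substance, and you handle it appropriately by fixing the series representation as the definition.
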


Let $\tilde{e}_{n}$ be the number of freely and evenly tagged polydiagonal
subspaces of $\mathbb{R}^{n}$.

\begin{prop}
The EGF of $(\tilde{e}_{n})$ is $\tilde{E}(x)=e^{I_{0}(2x)/2-1/2}$.
\end{prop}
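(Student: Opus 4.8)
The plan is to count freely and evenly tagged polydiagonal subspaces by building them from their ``colored'' blocks and invoking the exponential formula. Recall that a freely and evenly tagged polydiagonal subspace of $\R^n$ corresponds to a tagged partition $\calP$ of $\{1,\ldots,n\}$ whose partial involution is fully defined (since the subspace is fully tagged -- being evenly tagged), has no fixed point (being freely tagged, consistent with evenly tagged having no block of size zero), and satisfies $\#P = \#P^*$ for all $P$. So the combinatorial objects to count are partitions of $\{1,\ldots,n\}$ together with a fixed-point-free involution $*$ on the blocks that pairs blocks of equal size.

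First I would identify the ``indecomposable building block'' of such a structure. The involution pairs up blocks into unordered pairs $\{P, P^*\}$ with $\#P = \#P^* = m$ for some $m \geq 1$; such a pair occupies a $2m$-element subset of $\{1,\ldots,n\}$. I would count, for a $2m$-element set $S$, the number of ways to split $S$ into an ordered pair of $m$-element blocks $(P,P^*)$ -- this is $\binom{2m}{m}$ -- and then divide by $2$ to account for the unordered nature of the pair $\{P,P^*\}$ (this is legitimate since $P \neq P^*$ always, as the involution is fixed-point-free). So the number $b_{2m}$ of ways to put a single such ``paired block'' structure on a $2m$-element set is $\tfrac12\binom{2m}{m}$ for $m \geq 1$, and $b_j = 0$ for odd $j$ and for $j = 0$. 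By the previous proposition, $\sum_{m=0}^\infty \binom{2m}{m}\frac{x^{2m}}{(2m)!} = I_0(2x)$, so the EGF of $(b_j)$ is $B(x) = \tfrac12\big(I_0(2x) - 1\big)$, where the $-1$ removes the $m=0$ term and the $\tfrac12$ is the unordered-pair correction; note $b_0 = 0$ as required for the exponential formula.

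Next I would apply the exponential formula (Theorem~8.25 as quoted): a freely and evenly tagged polydiagonal subspace is precisely a partition of $\{1,\ldots,n\}$ into an unspecified number of nonempty subsets, with a ``paired block'' structure built on each subset. Hence the EGF of $(\tilde e_n)$ is $\tilde E(x) = e^{B(x)} = e^{I_0(2x)/2 - 1/2}$, which is exactly the claimed formula. The main obstacle, and the point requiring the most care, is the justification that the unordered-pair correction factor of $\tfrac12$ is applied correctly and does not clash with the bookkeeping of the exponential formula: one must check that each paired block $\{P,P^*\}$ really is a single ``connected component'' in the sense of the exponential formula (so that its internal symmetry is handled by $b_j$ rather than by an extra global symmetry factor), and that distinct evenly tagged tagged partitions correspond bijectively to distinct choices of (set partition into pair-supports) $+$ (paired-block structure on each support). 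This bijectivity is essentially the uniqueness of the tagged partition associated to a polydiagonal subspace, already noted in the text, so the argument is clean once the block count $b_{2m} = \tfrac12\binom{2m}{m}$ is pinned down.
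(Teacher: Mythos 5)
Your proposal is correct and follows essentially the same route as the paper: the paper also decomposes a freely and evenly tagged partition into pair-supports, counts $\tfrac12\binom{2m}{m}$ ways to split a $2m$-element support into the two equal-sized classes swapped by the involution (zero for odd sizes and size $0$), identifies the resulting EGF as $I_0(2x)/2-1/2$, and invokes the exponential formula. Your extra care about the unordered-pair factor of $\tfrac12$ and the bijectivity bookkeeping is a more explicit justification of the same argument, not a different one.
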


\begin{proof}
To build a freely and evenly tagged polydiagonal subspace $\Delta_{\mathcal{P}}$,
we partition $C=\{1,\ldots,n\}$ into an unspecified number of nonempty
subsets and then further divide each subset into two nonempty classes
of equal sizes. These two classes are mapped to each other by the
involution. The number of ways this further division can be done is
\[
a_{n}=\begin{cases}
\frac{1}{2}{n \choose n/2}, & n\text{ is positive and even}\\
0, & n\text{ is odd}.
\end{cases}
\]
The EGF of ($a_{n})$ is $I_{0}(2x)/2-1/2$.
The result now follows from the Exponential formula.
\end{proof}

\begin{prop}
\label{egfE}
The EGF of $(e_{n})$ is $E(x)=e^{I_{0}(2x)/2-1/2+x}$. 
\end{prop}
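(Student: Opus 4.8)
The plan is to decompose an evenly tagged polydiagonal subspace into two independent pieces and apply the Product formula, reducing everything to the already‑computed EGF $\tilde E(x)$ of the freely and evenly tagged subspaces. Recall that an evenly tagged subspace is fully tagged, so its partial involution is fully defined, and it has at most one fixed point $P_0$. If $P_0$ exists it is a single (nonempty) class, consisting of the cells with vanishing dynamics; all remaining classes are grouped into pairs $\{P,P^*\}$ with $P\ne P^*$ and $\#P=\#P^*$. Thus specifying an evenly tagged subspace of $\R^n$ amounts to: choosing a subset $T\subseteq C=\{1,\dots,n\}$ to serve as the fixed‑point class $P_0$ (with $T=\emptyset$ meaning ``no fixed point''), and then building a freely and evenly tagged structure on $S:=C\setminus T$.

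First I would make this decomposition precise as a bijection, so that the hypotheses of the Product formula \cite[Theorem 8.21]{Bona} are literally satisfied: the first structure (on $S$) is ``freely and evenly tagged polydiagonal subspace'', whose count on an $m$‑element set is $\tilde e_m$; the second structure (on $T$) is ``either do nothing, or declare the whole set to be the single class $P_0$''. For the second structure there is exactly one way on a set of any size $t\ge 0$ — the empty structure when $t=0$, and the single block $P_0=T$ when $t\ge 1$ — so its EGF is $\sum_{t=0}^\infty \frac{x^t}{t!}=e^x$.

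The Product formula then gives $E(x)=\tilde E(x)\,e^x$. Substituting the formula $\tilde E(x)=e^{I_0(2x)/2-1/2}$ from the preceding proposition yields
\[
E(x)=e^{I_0(2x)/2-1/2}\cdot e^{x}=e^{I_0(2x)/2-1/2+x},
\]
as claimed.

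The only point requiring care — and the closest thing to an obstacle — is the bookkeeping of the fixed point: one must check that allowing $T=\emptyset$ in the ``$P_0$ part'' exactly accounts for the freely tagged (no fixed point) case, that a nonempty $T$ always yields a legitimate tagged partition with its unique fixed point, and that equal class sizes are preserved so the result is genuinely evenly tagged. Once the decomposition is phrased as an honest bijection between structures, the EGF computation is immediate.
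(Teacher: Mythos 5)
Your proposal is correct and follows essentially the same route as the paper: split $C$ into the (possibly empty) fixed-point class $P_0$ and its complement, build a freely and evenly tagged structure on the complement, and apply the Product formula with the $e^x$ factor accounting for the one way to form $P_0$ on a set of any size. The extra bookkeeping you mention (the $T=\emptyset$ case matching the no-fixed-point case) is exactly what makes the paper's one-line argument valid, so nothing further is needed.
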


\begin{proof}
To build an evenly tagged polydiagonal subspace $\Delta_{\mathcal{P}}$,
we split $C=\{1,\ldots,n\}$ into the disjoint union of two subsets
and then we create a freely and evenly tagged polydiagonal subspace
on one of these subsets and use the other subset to create the fixed
point $P_{0}$ of the involution. The creation of the fixed point
set can be done in $u_{n}=1$ way. The EGF
of $(u_{n})$ is $U(x)=e^{x}$. The Product formula now implies that
$E(x)=\tilde{E}(x)U(x)=e^{I_{0}(2x)/2-1/2+x}$.
\end{proof}
Let $\tilde{f}_{n}$ be the number of freely and fully tagged polydiagonal
subspaces of $\mathbb{R}^{n}$.

\begin{prop}
The EGF of $(\tilde{f}_{n})$ is $\tilde{F}(x)=e^{(e^{2x}-2e^{x}+1)/2}$.
\end{prop}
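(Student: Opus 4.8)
The plan is to mimic the structure of the preceding proof for $(\tilde e_n)$, applying the Exponential formula. A freely and fully tagged polydiagonal subspace $\Delta_\mathcal{P}$ corresponds to a tagged partition $\mathcal{P}$ of $C = \{1, \ldots, n\}$ whose partial involution $*$ is fully defined and has no fixed point; that is, $*$ is a fixed-point-free involution on the set of classes $\mathcal{P}$. So the classes come in unordered pairs $\{P, P^*\}$ with $P \neq P^*$, and the union $P \cup P^*$ of each such pair is a nonempty subset of $C$ of size at least $2$. This gives a canonical partition of $C$ into these ``super-blocks,'' together with, on each super-block, a choice of how to split it into the two nonempty classes $P$ and $P^*$.

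First I would count the structure built on a single super-block of size $m$: it is the number $a_m$ of ways to split an $m$-element set into an unordered pair of two nonempty subsets. This is $a_m = (2^m - 2)/2 = 2^{m-1} - 1$ for $m \geq 1$, and $a_0 = 0$ (a super-block is nonempty and must split into two nonempty pieces, so $a_1 = 0$ as well). Next I would compute the EGF of $(a_m)$:
\[
A(x) = \sum_{m \geq 1} (2^{m-1} - 1) \frac{x^m}{m!} = \frac{1}{2}\bigl(e^{2x} - 1\bigr) - \bigl(e^x - 1\bigr) = \frac{e^{2x} - 2e^x + 1}{2}.
\]
Since $a_0 = 0$, the Exponential formula applies: building a freely and fully tagged polydiagonal subspace on $\{1, \ldots, n\}$ means partitioning the set into an unspecified number of nonempty subsets (the super-blocks) and building a structure of the above kind on each, so $\tilde{F}(x) = e^{A(x)} = e^{(e^{2x} - 2e^x + 1)/2}$, as claimed.

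There is no serious obstacle here; the only point requiring care is the combinatorial bookkeeping for $a_m$ — remembering the factor of $\tfrac12$ because the pair $\{P, P^*\}$ is unordered (the involution merely swaps them), and subtracting the two degenerate splits in which one part is empty. I would also double-check the first few values against Figure~\ref{numberOfPolydiagonals} if the sequence $(\tilde f_n)$ appears there, though it may only appear implicitly through $f_n$.
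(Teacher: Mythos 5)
Your proposal is correct and follows essentially the same route as the paper: partition $C$ into super-blocks, split each into an unordered pair of nonempty classes (the count $2^{m-1}-1$ is exactly the Stirling number $S_{m,2}$ the paper invokes), and apply the Exponential formula. Your direct computation of the EGF $\tfrac{1}{2}(e^{2x}-1)-(e^x-1)$ is just an unfolding of the paper's use of $S_2(x)=(e^x-1)^2/2$, so there is no substantive difference.
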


\begin{proof}
To build a freely and fully tagged polydiagonal subspace $\Delta_{\mathcal{P}}$,
we partition $C=\{1,\ldots,n\}$ into an unspecified number of nonempty
subsets and then further divide each subset into two nonempty classes.
These two classes are mapped to each other by the involution. The
number of ways this further division can be done is the Stirling number
$S_{n,2}=2^{n-1}-1$. The Exponential formula, using Proposition~\ref{stirlingprop}, now implies that $\tilde{F}(x)=e^{S_{2}(x)}=e^{(e^{2x}-2e^{x}+1)/2}$.
\end{proof}

The sequence $(\tilde{f}_{n})$ is listed as \cite[\href{https://oeis.org/A060311}{A060311}]{oeis}.

\begin{prop}
\label{egfF}
The EGF of $(f_{n})$ is $F(x)=e^{(e^{2x}-2e^{x}+2x+1)/2}$.
\end{prop}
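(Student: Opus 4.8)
The plan is to mirror the proof of Proposition~\ref{egfE}, using the Product formula together with the EGF $\tilde F(x)=e^{(e^{2x}-2e^x+1)/2}$ of the freely and fully tagged polydiagonal subspaces established in the preceding proposition. The key structural observation is that a fully tagged polydiagonal subspace $\Delta_{\mathcal P}$ amounts to exactly the following data: a freely (hence fixed-point-free) and fully tagged structure on some subset of the cells, together with a designation of the remaining cells as the fixed block $P_0$. Indeed, since the partial involution of a fully tagged $\mathcal P$ is fully defined, every class other than $P_0$ is matched with a distinct partner $P^*$, so those classes assemble into a freely and fully tagged structure (in the sense of Definition~\ref{freely}) on the union $S$ of those classes; and $C\setminus S$ is precisely $P_0$, or is empty when there is no fixed point.

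First I would split $C=\{1,\ldots,n\}$ into the disjoint union of two subsets $S$ and $T$, build a freely and fully tagged polydiagonal subspace on $S$, and use $T$ to form the fixed block $P_0$ of the involution. The creation of the fixed-point block from $T$ can be done in $u_n=1$ way for each $n\ge 0$, where $n=\#T$: either $T\ne\emptyset$ and $P_0=T$, or $T=\emptyset$ and there is no fixed point. Hence the EGF of $(u_n)$ is $U(x)=e^x$. The Product formula then gives
$$
F(x)=\tilde F(x)\,U(x)=e^{(e^{2x}-2e^x+1)/2}\cdot e^{x}=e^{(e^{2x}-2e^x+2x+1)/2},
$$
as claimed.

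The only point requiring care — and the mild obstacle — is verifying that the split-and-build data really is in bijection with the fully tagged polydiagonal subspaces of $\R^n$, i.e.\ that none is double-counted and none is missed. This reduces to the uniqueness of the fixed block (the partial involution has at most one fixed point by Definition~\ref{tagged}) and to the fact that in a fully tagged $\mathcal P$ the non-fixed classes are genuinely paired off into a freely tagged structure; both are immediate from the definitions. No computation beyond the multiplication displayed above is needed.
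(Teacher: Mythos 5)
Your argument is correct and is essentially the paper's own proof: the same split of $C$ into the freely-and-fully-tagged part and the fixed block $P_0$, the same $U(x)=e^x$ factor, and the same application of the Product formula to get $F(x)=\tilde F(x)e^x$. Your extra verification that the correspondence is a bijection (uniqueness of $P_0$ and pairing of the non-fixed classes) is a welcome but minor elaboration of what the paper leaves implicit.
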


\begin{proof}
To build a fully tagged polydiagonal subspace $\Delta_{\mathcal{P}}$,
we split $C=\{1,\ldots,n\}$ into the disjoint union of two subsets
and then we create a freely and fully tagged polydiagonal subspace
on one of these subsets and use the other subset to create the fixed
point $P_{0}$ of the involution. The Product formula now implies
that $F(x)=\tilde{F}(x)e^{x}=e^{(e^{2x}-2e^{x}+2x+1)/2}$.
\end{proof}

The EGF of $(f_{n})$ coincides with the one in \cite[\href{https://oeis.org/A307389}{A307389}]{oeis}, proving that both sequences are indeed the same. 
This latter sequence counts the number of elements in the so-called species of orbit polytopes in dimension $n$, see \cite{hopfmonoid}.

\begin{prop}
\label{egfP}
The EGF of $(p_{n})$ is $P(x)=e^{(e^{2x}-1+2x)/2}$.
\end{prop}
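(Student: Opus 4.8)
The plan is to follow the template of the earlier EGF computations (the Propositions for $\tilde{E}$, $E$, $\tilde{F}$, $F$) and assemble an arbitrary polydiagonal subspace of $\R^n$ --- equivalently, by the bijection recorded after Definition~\ref{tagged}, an arbitrary tagged partition of $C=\{1,\dots,n\}$ --- from two independent ingredients, handled by the Product formula and the Exponential formula respectively. The first ingredient is the fixed-point class $P_0$ of the partial involution, which may be absent: I would split $C = T \sqcup C'$, use $T$ to form $P_0$ in the unique way to do so on a given set (including $T = \emptyset$, which corresponds to there being no fixed point), and build a \emph{freely tagged} polydiagonal subspace (Definition~\ref{freely}) on $C'$. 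Since forming $P_0$ contributes the EGF $e^x$, the Product formula gives $P(x) = \tilde{P}(x)\,e^x$, where $\tilde{P}$ denotes the EGF of the number of freely tagged polydiagonal subspaces of $\R^n$.

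Next I would evaluate $\tilde{P}$ with the Exponential formula. A freely tagged polydiagonal subspace is built by partitioning $C'$ into nonempty blocks and decorating each block $S$ in exactly one of two mutually exclusive ways: either declare $S$ to be a single untagged class ($1$ way), or split $S$ into an unordered pair of nonempty classes interchanged by $*$, which can be done in $S_{|S|,2} = 2^{|S|-1}-1$ ways exactly as in the proof for $\tilde{F}$. Hence the number of admissible decorations of an $n$-element block is $0$ for $n=0$ and $1 + (2^{n-1}-1) = 2^{n-1}$ for $n \geq 1$, whose EGF is $\tfrac{1}{2}(e^{2x}-1)$; equivalently this EGF is $(e^x-1) + S_2(x)$ with $S_2$ as in Proposition~\ref{stirlingprop}. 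The Exponential formula then yields $\tilde{P}(x) = e^{(e^{2x}-1)/2}$.

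Combining the two steps gives $P(x) = \tilde{P}(x)\,e^x = e^{(e^{2x}-1)/2}\,e^{x} = e^{(e^{2x}-1+2x)/2}$, which is the claimed formula. (One could also avoid naming $\tilde{P}$ and simply apply the Product formula to the EGFs $e^{(e^{2x}-1)/2}$ and $e^x$ directly.)

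There is no genuinely hard computation here; the one point requiring care is the bookkeeping around the clause ``at most one fixed point'' in Definition~\ref{tagged}. I would check that (i) letting $T$ range over all subsets of $C$, including $T = \emptyset$, produces each tagged partition exactly once when paired with a freely tagged partition of the complement, and (ii) the two block-decoration options above are exhaustive and disjoint, so that every freely tagged partition of $C'$ arises from a unique partition-plus-decoration. This verification is routine, but it is the step I would write out in full.
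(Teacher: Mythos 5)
Your proof is correct, but it assembles the answer by a different decomposition than the paper. The paper's proof is a one-line application of the Product formula to quantities already computed: it splits $C$ into the set of cells lying in untagged classes (counted by the Bell numbers, EGF $B(x)=e^{e^x-1}$) and the set of cells lying in tagged classes (a fully tagged subspace, EGF $F(x)$ from Proposition~\ref{egfF}), giving $P(x)=B(x)F(x)$. You instead split off the fixed-point class $P_0$ (EGF $e^x$, with $T=\emptyset$ meaning no fixed point, exactly as in the paper's proofs of Propositions~\ref{egfE} and~\ref{egfF}) and compute from scratch the EGF $\tilde P(x)=e^{(e^{2x}-1)/2}$ of \emph{freely tagged} subspaces via the Exponential formula, decorating each block either as one untagged class or as an unordered swapped pair, for $1+(2^{n-1}-1)=2^{n-1}$ decorations per block of size $n$; your check that $(e^x-1)+S_2(x)=\tfrac12(e^{2x}-1)$ and the uniqueness bookkeeping for $P_0$ and for the block decorations are exactly the points that need care, and they hold. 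The paper's route is shorter because it reuses $F(x)$; yours is self-contained (it never invokes $F$), makes the cancellation $e^{e^x-1}\cdot e^{(e^{2x}-2e^x+2x+1)/2}$ unnecessary, and produces as a by-product the EGF for freely tagged polydiagonal subspaces, a class the paper defines (Definition~\ref{freely}) but never counts. Both arguments rest on the same two formal tools, so the difference is one of organization rather than substance.
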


\begin{proof}
To build a polydiagonal subspace $\Delta_{\mathcal{P}}$,
we split $C=\{1,\ldots,n\}$ into the disjoint union of two subsets
and then we further partition the first subset into classes and create
a fully tagged polydiagonal subspace on the second subset.
The number of ways the first subset can be partitioned is the Bell
number $B_{n}$. The Product formula now implies that $P(x)=B(x)F(x)=e^{(e^{2x}-1+2x)/2}$.
\end{proof}

This provides an alternate proof for the fact that the $p_n$ are the Dowling numbers.

\subsection{Recursion formulas}
\label{recursion}
The following recurrence relations can be used to find the numbers of some types of polydiagonal 
subspaces.  The initial values are $p_0 = s_0 = f_0 = e_0 = 1$ as discussed in Example~\ref{R0}.
Some of these recurrence relations are already known \cite{oeis}, but we include them for completeness.

Recall that the multinomial coefficient ${ n \choose k_1,k_2,\ldots,k_r}$ is the number of ways to label $n$ elements of a set with $r$ labels 
such that the $i$-th label is used exactly $k_i$ times. 
If $k_1+\cdots+k_r=n$ and $k_i$ is nonnegative for all $i$, then we can use the alternate notation
$$
{ n \choose k_1\ k_2\ \ldots\ k_{r-1}}:={ n \choose k_1,k_2,\ldots,k_r}=\frac{n!}{k_1! k_2!\cdots k_r!}.
$$
This is the binomial coefficient when $r = 2$.  

\begin{prop}
\label{recurrence}
If $n \geq 0$, then 
\begin{itemize}
\item
$\displaystyle
p_{n+1} = p_n +  \sum_{ k+\ell < n} {n \choose k \ \ell}  p_{k} +  \sum_{k=0}^n {n \choose k} p_{k}
$
\item
$\displaystyle
s_{n+1} = % \hphantom{\sum_{k =0}^{n} {n \choose k} p_{k} + } 
\sum_{k=0}^n {n \choose k} s_{k}
$
\item
$\displaystyle
f_{n+1} = % \hphantom{\sum_{k =0}^{n} {n \choose k} p_{k} + } 
f_n  +   \sum_{k+\ell< n} {n \choose k \ \ell} f_{k}
$
\item
$\displaystyle
e_{n+1} = %\hphantom{\sum_{k =0}^{n} {n \choose k} p_{k} + }  
e_n  + \sum_{k+2\ell+1 = n} {n \choose k \ \ell} e_{k}.
$
\end{itemize}

\end{prop}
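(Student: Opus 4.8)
The plan is to prove all four recurrences by a single combinatorial device: count tagged partitions $\mathcal{P}$ of $\{1,\ldots,n+1\}$ by sorting on the role played by the block $P:=[n+1]$ that contains the element $n+1$. Since polydiagonal subspaces of $\R^{n+1}$ correspond bijectively to tagged partitions of $\{1,\ldots,n+1\}$, and likewise for each restricted class (synchrony, fully tagged, evenly tagged), it suffices to count the latter. There are three mutually exclusive cases for $P$: (a) the partial involution $*$ is not defined on $P$; (b) $*$ is defined on $P$ and $P^*\neq P$; (c) $P=P^*=P_0$ is the fixed block. In every case I write $P=\{n+1\}\cup C$ with $C\subseteq\{1,\ldots,n\}$ and record the structure that remains on the so-far unused elements of $\{1,\ldots,n\}$.

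For $p_{n+1}$ all three cases occur. In case (a), deleting the untagged block $P$ leaves an arbitrary tagged partition of the remaining $k$ elements, and $\mathcal{P}$ is reconstructed from that partition together with the set $C$; choosing the $k$ remaining elements gives the contribution $\sum_{k=0}^{n}\binom{n}{k}p_k$. In case (b) there is a second, distinct block $P^*$; I partition $\{1,\ldots,n\}$ into three labelled parts --- the $k$ elements carrying an arbitrary tagged partition, the $\ell$ elements of $C$, and the remaining $n-k-\ell$ elements forming $P^*$ --- and since $P^*$ must be nonempty this forces $k+\ell<n$ and contributes $\sum_{k+\ell<n}\binom{n}{k\ \ell}p_k$. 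In case (c), restricting $\mathcal{P}$ to $\{1,\ldots,n\}$ yields a tagged partition of $\{1,\ldots,n\}$ whose fixed block is $P_0\setminus\{n+1\}$ (empty if $P_0=\{n+1\}$, in which case the restriction simply has no fixed point); conversely every tagged partition of $\{1,\ldots,n\}$ arises exactly once, by adjoining $n+1$ to its fixed block or, failing that, creating the new fixed block $\{n+1\}$. Hence case (c) contributes $p_n$, and summing the three cases gives the recurrence for $p_{n+1}$.

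The other three recurrences are the same argument with cases omitted. A synchrony subspace has empty involution, so only case (a) survives and $s_{n+1}=\sum_{k=0}^{n}\binom{n}{k}s_k$. In a fully tagged partition every block is tagged, so case (a) cannot occur; case (c) contributes $f_n$ by the identical fixed-block bijection and case (b) contributes $\sum_{k+\ell<n}\binom{n}{k\ \ell}f_k$. For an evenly tagged partition, case (b) carries the extra condition $\#P=\#P^*$: with $|C|=\ell$ this forces $|P^*|=\ell+1$, so the number of remaining elements is $k=n-2\ell-1$, the constraint becomes $k+2\ell+1=n$, and the relevant coefficient $\binom{n}{k,\ell,\ell+1}$ is exactly $\binom{n}{k\ \ell}$; case (c) again contributes $e_n$. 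This produces the stated recurrences for $f_{n+1}$ and $e_{n+1}$. The base values $p_0=s_0=f_0=e_0=1$ are from Example~\ref{R0}.

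I expect the only genuine subtlety to be the bookkeeping in case (c): one must use that a tagged partition has \emph{at most one} fixed block and permit that block to be empty, which is precisely what turns ``a fixed block containing $n+1$ together with a fixed-point-free structure on the complement'' into ``an arbitrary structure of the same type on $\{1,\ldots,n\}$''. A secondary point is getting the index ranges of the multinomial sums right, so that nonemptiness of $P^*$ is encoded as $k+\ell<n$ (not $k+\ell\le n$) and, for $e_n$, so that the size-matching $\#P=\#P^*$ produces the block of size $\ell+1$. Everything else is a routine count of labelled structures.
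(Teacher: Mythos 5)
Your proposal is correct and follows essentially the same argument as the paper: split the tagged partitions of $\{1,\ldots,n+1\}$ according to whether the class of $n+1$ is the fixed block, is tagged with a distinct partner class, or is untagged, and count each case (your cases (c), (b), (a) are the paper's sets $A$, $B$, $C$), with the same handling of nonemptiness of $P^*$ giving $k+\ell<n$ and of the size constraint giving $k+2\ell+1=n$ in the evenly tagged case. The only difference is that you spell out the $s$, $f$, $e$ cases, which the paper dismisses as ``similar arguments.''
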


\begin{proof}
Let $T_n$ be the set of tagged partitions of $C_n := \{1, \dots, n\}$.  We derive the recurrence relation for 
$p_{n+1}  = \# T_{n+1}$ by counting the size of
each set of the disjoint union $T_{n+1} = A \cupdot B \cupdot C$, where 
\[
\begin{aligned}
A &=\{\mathcal{P}\in T_{n+1} \mid [n+1]^* = [n+1]\} \\
B &=\{\mathcal{P}\in T_{n+1} \mid  [n+1]^* \neq [n+1]\}\\
C &=\{\mathcal{P}\in T_{n+1} \mid [n+1]^* \text{ is not defined} \}.
\end{aligned}
\]
The size of $A$ is $p_n$ since the removal of $n+1$ from $[n+1]$ is a bijection from $A$ to $T_n$.
This gives the first term in the recurrence relation.

To count $B$, we first count the elements in $B$ for which
$[n+1] \cap C_n$ has size $\ell$ and $[n+1]^*\cap C_n$ has size $m$, and then sum the counts.
We can partition $C_n$ into 3 blocks of size $k$, $\ell$ and $m$, with $k + \ell + m = n$, in  
$n \choose k \ \ell$ ways.  
Note that $m \geq 1$, so $k + \ell < n$. 
The classes and partial involutions on the $k$ 
vertices in $C_n \setminus([n+1] \cup [n+1]^*)$
can be chosen in $p_k$ ways.  
Thus, the size of $B$ is the second term in the recurrence relation for $p_{n+1}$

Next we count $C$ by summing the number of elements in $C$ for which $C_n \setminus [n+1]$ has size $0 \leq k \leq n$.
We can choose these $k$ elements from $C_n$ in $n \choose k$ ways.
There are $p_k$ ways to choose the classes and partial involutions on these $k$ elements.
Thus, the size of $C$ is the
third term in the recurrence relation for $p_{n+1}$.  

The other recurrence relations are derived by similar arguments.
\end{proof}

\begin{remk}
The sums in Proposition~\ref{recurrence} can be evaluated explicitly using
$$
\sum_{k+\ell < n} x_{k, \ell, n} = \sum_{k=0}^{n-1} \sum_{\ell = 0}^{n-k-1}x_{k, \ell, n}  \quad \text{and} \quad
\sum_{k+2\ell +1 = n} x_{k, \ell, n} = \sum_{\ell = 0}^{\lfloor \frac{n-1} 2 \rfloor} 
x_{n-2\ell-1,\ell, n}.
$$
\end{remk}

\section{Conclusion}
Polydiagonal spaces capture some of the most striking phenomena observed in network dynamical systems.
They correspond to multiple cells behaving in unison (synchrony) or opposition (anti-synchrony).
Polydiagonal subspaces are partitioned into synchrony subspaces and anti-synchrony subspaces.
Synchrony subspaces can be described in terms of equalities of the form $x_i = x_j$, meaning that cell $i$ is
synchronized with cell $j$.
Anti-synchrony subspaces have at least one equality of the form $x_i = - x_j$, including $x_i = 0$.

We have focused on linearly coupled cell networks of System~(\ref{ODE}), but there are many results stating
that a polydiagonal subspace is invariant for nonlinear networks with the coupling of 
Equation~(\ref{weight-additive-coupled})
if the subspace is $M$-invariant.
The matrix $M$ is the weighted adjacency matrix, or the weighted Laplacian matrix, 
of the network. 
%Whereas these effects are studied in systems that are not necessarily linear, there are many results stating that a polydiagonal space is dynamically invariant for all systems in a certain class, if and only if they are invariant for some corresponding linear maps.
%The techniques of linear algebra allow the formulation of rules for implication and inclusion of certain types of polydiagonal subspaces with respect to others.  Using these rules we identify which types of polydiagonal subspaces can be $M$-invariant for several classes of coupled cell networks.
%\note{This is an old version of the previous 2 sentences:
%This observation allows us to use techniques from linear algebra to identify all dynamically invariant polydiagonal subspaces for several classes of coupled cell networks.
%More importantly, it allows for the formulation of rules for implication and inclusion of certain types of polydiagonal subspaces with respect to others.}

Proposition~\ref{dynamicallyInvariant} describes the consequences of $M$-invariance for coupled cell networks. 
An $M$-invariant synchrony subspace is dynamically invariant
for System~(\ref{ODE}) with any internal dynamics function $f$.
An $M$-invariant anti-synchrony subspace is dynamically invariant for System~(\ref{ODE}) with any \emph{odd} internal dynamics function $f$.

We define several different types of anti-synchrony subspaces shown in Figure~\ref{classification}, and
identify which types can be $M$-invariant for several classes of coupled cell networks.
We focus on evenly tagged subspaces, where every cluster of synchronized cells 
has the same size as the corresponding cluster of anti-synchronized cells.
%This includes the evenly tagged subspace $\{0\} \subseteq \R^n$, which has one cluster where every cell is 
%anti-synchronized with itself.
Evenly tagged subspaces
are precisely the polydiagonal subspaces which are orthogonal to the diagonal vector $\1$.
 
It is well-known that if each row sum of $M$ is the same, then the synchrony subspace $\spn(\1)$ is $M$-invariant.
%then the synchrony subspace with all cells synchronized is $M$-invariant.  
For example, the Laplacian matrix of any weighted digraph has each row sum equal to 0,
so there is no coupling between cells of System~(\ref{ODEwithL}) if all the cells are synchronized.
%, and the network of System~(\ref{ODEwithL}) has a solution where each cell is synchronized.
%For any weighted network, the Laplacian matrix is constructed from the adjacency matrix to have 0 row sums.
We focus on the implications of the matrix $M$ having constant \emph{column} sums.  
Theorem~\ref{main} states that if $M$ has constant column sums, and certain non-degeneracy conditions hold,
then every $M$-invariant anti-synchrony subspace is evenly tagged.  
This has practical importance since the number of evenly tagged subspaces is much smaller than 
the number of all anti-synchrony subspaces.
This greatly simplifies the search for $M$-invariant subspaces if $M$ has constant column sums.
For example, the algorithm in \cite{NSS7} which finds $M$-invariant synchrony subspaces 
can be generalized to find $M$-invariant evenly tagged subspaces efficiently.

We find conditions on a weighted digraph so that the adjacency matrix, or the Laplacian matrix,
has constant column sums. Theorem~\ref{input-output} says that the Laplacian matrix of a weight-balanced digraph also has constant column sums.
Lemma~\ref{symimplieseul} says that 
a vertex transitive weighted digraph is weight-balanced.
Furthermore, the Laplacian matrix of a graph (as opposed to a weighted digraph)
is weight-balanced.
In summary, we have identified several classes of coupled cell networks where 
all of the dynamically invariant subspaces are either 
synchrony subspaces or evenly tagged anti-synchrony subspaces.

\section*{Data Availability and Conflict of Interest}
This paper's data is computer-generated  and available at \cite{ourGitHub}.  
The authors declare no financial conflict of interest.  
E.N. acknowledges the support of 
the Center for Research in Mathematics Applied to Industry (FAPESP Cemeai grant 2013/07375-0)  and the Serrapilheira Institute (Grant No. Serra-1709-16124)

\bibliographystyle{plain}
\bibliography{anti}

\end{document}